\documentclass[12pt, a4paper, reqno, oneside]{article}

\usepackage{amssymb,amsmath}
\usepackage{amsmath,accents}
\DeclareUnicodeCharacter{0335}{}
\usepackage{tikz-cd}
\usepackage{authblk} 
\usepackage{physics} 
\usepackage{tikz}
\usepackage{titlesec} 
\usetikzlibrary{babel}
\usepackage{adjustbox}
\usepackage{url}
\usepackage{xcolor}
\usepackage{yhmath}
\usepackage{eso-pic}
\usepackage[T1]{fontenc}
\usepackage[utf8]{inputenc}
\usepackage[spanish,english]{babel}
\usepackage{csquotes}
\usepackage{pst-node}
\usepackage[english]{babel}
\usepackage{latexsym}
\usepackage{euscript}
\usepackage{mathtools}
\usepackage[all]{xy}
\usepackage{mathrsfs}
\usepackage[percent]{overpic}
\usepackage{setspace}
\usepackage{graphicx}
\usepackage{pdfsync}
\usepackage{amsthm}
\usepackage{thmtools}
\usepackage{yfonts}
\usetikzlibrary{calc}
\usetikzlibrary{decorations.pathmorphing}

\tikzset{curve/.style={settings={#1},to path={(\tikztostart)
    .. controls ($(\tikztostart)!\pv{pos}!(\tikztotarget)!\pv{height}!270:(\tikztotarget)$)
    and ($(\tikztostart)!1-\pv{pos}!(\tikztotarget)!\pv{height}!270:(\tikztotarget)$)
    .. (\tikztotarget)\tikztonodes}},
    settings/.code={\tikzset{quiver/.cd,#1}
        \def\pv##1{\pgfkeysvalueof{/tikz/quiver/##1}}},
    quiver/.cd,pos/.initial=0.35,height/.initial=0}

\tikzset{tail reversed/.code={\pgfsetarrowsstart{tikzcd to}}}
\tikzset{2tail/.code={\pgfsetarrowsstart{Implies[reversed]}}}
\tikzset{2tail reversed/.code={\pgfsetarrowsstart{Implies}}}
\tikzset{no body/.style={/tikz/dash pattern=on 0 off 1mm}}

\usepackage[colorlinks=true,linktocpage=true, pagebackref=false, citecolor=black,linkcolor=black]{hyperref}
\normalbaselines
\makeatletter

\def\ps@myfancy{\let\@mkboth\markboth
 \def\@evenhead{\vbox{\hsize\textwidth 
 \hbox to \textwidth{\sf\mdseries\thepage 
 \rule[-.6ex]{0mm}{2mm} \hfill\sf\large\leftmark}
 \vskip 1pt \hrule}}
 \def\@oddhead{\vbox{\hsize\textwidth 
 \hbox to \textwidth{{\sf\large\leftmark}
 \rule[-.6ex]{0mm}{2mm} \hfill\sf\mdseries{\thepage}}
 \vskip 1pt \hrule}}}

\def\ps@myfancyplain{
 \def\@evenhead{\vbox{\hsize\textwidth%
 \rule[-.6ex]{0mm}{2mm} \hfill }
 \vskip 1pt \hrule
 \vskip\headsep
 \vskip\textheight
 \vskip1pc
 \hbox to \textwidth{\sf\mdseries\thepage 
 \rule[-6ex]{0mm}{2mm} \hfill }}
 \def\@oddhead{\vbox{\hsize\textwidth 
 \vskip 1pt\hrule
 \vskip\headsep
 \vskip\textheight
 \vskip2pc
 \hbox to \textwidth{\hfill\rule[.4ex]{1pc}{2.5pt}
 \sf\mdseries\thepage}
}}}

\def\ps@myemptyfun{
 \def\@evenhead{\vbox{\hsize\textwidth
 \rule[-.6ex]{0mm}{2mm} \hfill }
 \vskip 1pt 
 \vskip\headsep
 \vskip\textheight
 \vskip1pc
 \hbox to \textwidth{\sf\mdseries\thepage 
 \rule[-0.6ex]{0mm}{2mm} 
 \hfill }}
 \def\@oddhead{\vbox{\hsize\textwidth 
 \vskip 1pt 
 \vskip\headsep
 \vskip\textheight
 \vskip2pc
}}}

\makeatother
\providecommand{\proofname}{Demostraci\'on.}
 {\par\noindent{\it Demostraci\'on. }\nopagebreak\normalsize}%

 {\par\noindent{\it #1. }\nopagebreak\normalsize}%
 {\hfill\linebreak[2]\hspace*{\fill}$\square$\\[-1pt]}
\makeatother

\def\sqbullet{\raise.2ex\hbox{\vrule width 3.5pt height 3.5pt}}

{}

{\em}

\newcounter{substep}
\def\thesubstep{\arabic{substep}}

{\em}

\newcounter{subsubstep}
\def\thesubsubstep{\arabic{subsubstep}}

{\em}

\numberwithin{figure}{section}

{}

\newtheoremstyle{mystyle}
  {}
  {}
  {\itshape}
  {}
  {\sf \bfseries}
  {}
{ }
  {\thmname{#1}\thmnumber{{\textcolor{blue}{\, \hspace{-1mm}#2.}}}\thmnote{ (#3)}}

\theoremstyle{mystyle}
\definecolor{royalblue(web)}{rgb}{0.25, 0.41, 0.88}
\hypersetup{
    colorlinks=true,
    linkcolor=royalblue(web),
    filecolor=magenta,      
    urlcolor=cyan,
    pdftitle={Overleaf Example},
    pdfpagemode=FullScreen,
    }

\titleformat{\section}
  {\normalfont\LARGE\bfseries}{\thesection.}{1em}{}
\titleformat{\subsection}
  {\normalfont\Large\bfseries}{\thesubsection.}{1em}{}
\titleformat{\subsubsection}
  {\normalfont\normalsize\itshape}{\thesubsubsection.}{1em}{}

\newtheorem{Teor}{Theorem}[section]

\newtheorem{Prop}[Teor]{Proposition}
\newtheorem{Coro}[Teor]{Corollary}

\newtheorem{Defi}[Teor]{Definition}

\newtheorem{Lema}[Teor]{Lemma}

\newtheorem{Obses}[Teor]{Remarks}

\newtheorem{Obse}[Teor]{Remark}

 \newcommand{\N}{{\mathbb N}}
\newcommand{\Z}{{\mathbb Z}} 
\newcommand{\R}{{\mathbb R}}
 \newcommand{\C}{{\mathbb C}}

\newcommand{\Cc}{\mathcal{C}} 

\newcommand{\mail}[1]{\small\href{mailto:#1}{#1}}
\newenvironment{keywords}
{
\begin{center}
\textbf{Keywords}\\
\vspace{0.17cm}
\begin{minipage}{14.5cm}}
{\footnotesize
\end{minipage}
\end{center}}

\newenvironment{Abstract}
{
\begin{center}
\textbf{Abstract}\\
\vspace{0.25cm}
\begin{minipage}{14.5cm}}
{\footnotesize
\end{minipage}
\end{center}}
\begin{document}


	\begin{center}
		{\huge {\bfseries Generalized Sobolev-Orlicz spaces based on the Riesz fractional gradient as interpolation and potential spaces \par}}
		\vspace{1cm}
		
\begin{tabular}{l@{\hskip 2cm}l} 
	{\Large Pedro Miguel Campos}{\small\textsuperscript{1}} & {\Large Guillermo García-Sáez}{\small\textsuperscript{2}} \\
	\mail{pmcampos@fc.ul.pt} & \mail{guillermo.garciasaez@uclm.es}
\end{tabular}
\vspace{5mm}

\textsc{\textsuperscript{1}CMAFcIO – Departamento de Matem\'atica\\ Faculdade de Ci\'encias, Universidade de Lisboa\\ P-1749-016 Lisboa, Portugal} \\ \vspace{5mm}
\textsc{\textsuperscript{2}ETSII, Departamento de Matem\'aticas\\ Universidad de Castilla-La Mancha} \\
		Campus Universitario s/n, 13071 Ciudad Real, Spain. \\ \vspace{5mm}
\end{center}
\begin{Abstract}
In this work we establish that the recently introduced fractional Sobolev spaces based on the Riesz fractional gradient of Musielak-Orlicz functions by one of the authors, coincide with the space of Bessel potentials of functions on such generalized Orlicz setting. Moreover, we identify them as complex interpolation spaces, and exploiting the well known properties for interpolation of operators we obtain several structural properties for those spaces.
\end{Abstract}
\begin{keywords}
    Bessel potential spaces, complex interpolation, fractional gradient, Musielak-Orlicz spaces
\end{keywords}
\noindent {\bf AMS Subject Classification: 26A33, 42B35, 46B70, 46E30, 46E35.}
\tableofcontents

\section{Introduction}
In the recent years, nonlocal models have been widely in the study of partial differential equations arising from physical models since they can be used to capture long-range interactions, as well as relax the regularity assumptions over functions. In particular, integro-differential equations driven by the so called \textit{Riesz fractional gradient} $\nabla^s:=\nabla I_{1-s}$, where $I_s$ is the Riesz potential, introduced by Shieh and Spector in \cite{ShiehSpector2015}, have attracted a lot of interest from the commnities of calculus of variations and PDEs. We refer to the introduction of \cite{GarciaSaez2026} for an exhaustive collection of references from the recent years.

The Riesz fractional gradient exhibits many desirable properties from the physical point of view, see \cite{Silhavy2020}, generates the fractional Laplacian, i.e., $-\nabla^s\cdot \nabla^s=(-\Delta)^s$, as well as generalizes the classical gradient $\nabla$ for functions in $W^{1,p}$, i.e., $\nabla^su\to \nabla u$ for every $u\in W^{1,p}$ if $s\to 1^-$ (see \cite{BellidoCuetoMoraCorral2021}). All of these properties suggest that this is the apropiate generalization of the gradient to the fractional/nonlocal setting. Moreover, as it was proved in \cite[Theorem 1.7]{ShiehSpector2015}, the Sobolev-type of space associated to $\nabla^s$ is the well known Bessel potential space $H^{s,p}$, which as it is known, generalizes the classical Sobolev spaces. 

From here, it is natural to study this nonlocal operator on more abstract settings generalizing $L^p$ spaces. This is the idea of the work \cite{campos2024}, in which the first author introduced the fractional Orlicz-Sobolev spaces $H^{s,A}$ in the Musielak, or generalized, Orlicz setting. In the classical case, Musielak-Orlicz-Sobolev spaces $W^{1,A}$ are used to study nonhomogeneous
elastic materials, so it is natural to introduce the nonlocal analogous spaces since it is in solid mechanics where nonlocal gradients have been proven to be more useful (see e.g. \cite{BellidoGarcia2025, Hidde2026})

The purpose of this work is to continue the study of those spaces from the point of view of potential and interpolation theory, which is a natural toolbox if we look at the Bessel potential spaces on the Lebesgue setting, which are at the same time interpolation, nonlocal and potential spaces \cite{BellidoCuetoGarcia2025, BellidoGarcia2025, ShiehSpector2015, ShiehSpector2018}. In particular, we will prove that the spaces $H^{s,A}$ introduced in \cite{campos2024} coincide with the space of Bessel potentials on the Musielak-Orlicz setting, and hence they can be characterized as complex interpolation spaces by means of Calderón-Zygmund theory. Moreover, we will exploit the theory of singular integrals on generalized Orlicz spaces and the well known properties of complex interpolation to obtain several structural result for our spaces. 
\section{Notation}
We fix $n\in \mathbb{N}$ the dimension of our ambient space $\R^n$ and we will denote by $\Omega\subset \R^n$ and open bounded subset. The notation for Sobolev $W^{1,p}$ and Lebesgue $L^p$ spaces is the standard one, as is that of smooth
functions of compact support $C_c^\infty$. 

\noindent{} We will
indicate the domain of the functions as in $L^p
(\Omega$); the target is indicated only if it is not $\R$.  For two normed spaces $X$, $Y$, we denote the continuous embedding of $X$ into $Y$ as $X\to Y$, i.e., there exists a positive constant $C>0$ such that $\norm{x}_Y\leq C\norm{x}_Y$, for every $x\in X$; The compact embedding between $X$ and $Y$ will be denoted as $X\xhookrightarrow{}\xhookrightarrow{}Y$.

\noindent{}For $\alpha \in\mathbb{N}^n$, we give the standard meaning to the partial derivative $\partial^\alpha$ and the size $|\alpha|$.

\noindent{}Our convention for the Fourier transform of functions $f\in L^1(\R^n)$ is $$\widehat{f}(\xi)=\int_{\R^n}f(x)e^{-2\pi i x\cdot \xi}\,dx,\,\xi\in \R^n.$$ This definition is extended by continuity and duality to other function and distribution spaces as usually in function spaces theory.
The Schwartz space is denoted by $\mathcal{S}$. The variable
in the Fourier space is generically taken to be $\xi$. We will sometimes use the alternative notation $\mathfrak{F}(f)$ for $\widehat{f}$. More details of this operator could be found in the \cite{Grafakos2008}.
\section{Preliminaries}
We collect here several definitions and results that we shall need for our development.
\subsection{Complex interpolation}
In this subsection we recall the definition of the complex method of interpolation and its more important properties.
The complex interpolation method is based on the theory of holomorphic vector valued functions, in particular the ones defined on the strip $$S=\{z\in\C: 0\leq\operatorname{Re}z\leq 1\},$$ taking values on some complex Banach space $E$. Let $(E_0,E_1)$ a couple of Banach spaces such that $E_0,E_1$ are continuously embedded in some Hausdorff topological vector space (compatible couple). We define $\mathfrak{F}\left((E_0,E_1)\right)$ as the space of functions $f:S\to E_0+E_1$ such that $f$ is holomorphic in $\operatorname{int}S$, and continuous and bounded in $S$, and the functions $t\mapsto f(j+it)$ are continuous from $\R\to E_j$, $j=0,1$, and such that $$\norm{f(j+it)}_{E_j}\to 0,\,j=0,1,$$ as $|t|\to \infty$.
Clearly, $\mathfrak{F}(E_0,E_1)$ is a vector space. Moreover, endowed with the norm
$$\norm{f}_{\mathfrak{F}(\overline{E})}:=\operatorname{max}\{\operatorname{sup}_{t\in\R}\norm{f(it)}_{E_0},\operatorname{sup}_{t\in\R}\norm{f(1+it)}_{E_1}\},\,f\in \mathfrak{F}(\overline{E}),$$ it becomes a Banach space. Then, for any $\theta\in [0,1]$, we define $[E_0,E_1]_\theta$ as the space of all $x\in E_0+E_1$ such that there exists $f\in \mathfrak{F}(\overline{E})$ with $f(\theta)=x$, and with the norm 
$$\norm{x}_{\theta}:=\operatorname{inf}\{\norm{f}_{\mathfrak{F}(\overline{E})}: f\in \mathfrak{F}(\overline{E}), f(\theta)=x\}.$$ 
The functor, 
$$\Cc_\theta:(E_0,E_1)\to [E_0,E_1]_\theta,$$ is an exact interpolation functor of exponent $\theta$, i.e., the space $[E_0,E_1]_\theta$ is a Banach space such that $$E_0\cap E_1\xhookrightarrow{}[E_0,E_1]_\theta\xhookrightarrow{}E_0+E_1,$$ and for every operator $T:E_0+E_1\to F_0+F_1$ between two compatible couples, such that 
$$\norm{T}_{E_j\to F_j}=M_j,\,j=0,1,$$ then $T:[E_0,E_1]_{\theta}\to [F_0,F_1]_{\theta}$ with 
$$\norm{T}_{\mathcal{L}\left([E_0,E_1]_{\theta},[F_0,F_1]_{\theta}\right)}\leq M_0^{1-\theta}M_1^\theta.$$ 

The most important properties of complex interpolation spaces are the following:
\begin{Teor}[Properties of Complex interpolation spaces]\label{ComplexProps}
        Let $(E_0,E_1)$ a compatible couple of Banach spaces, and $\theta\in [0,1]$. Then, we have
        \begin{enumerate}
            \item[1.-] $[E_0,E_1]_{\theta}=[E_1,E_0]_{1-\theta}$.
            \item[2.-]If $E_1\xhookrightarrow{}E_0$ and $\theta_0>\theta_1$, $[E_0,E_1]_{\theta_0}\xhookrightarrow{}[E_0,E_1]_{\theta_1}$.
            \item[3.-] If $E_0=E_1$ and $0<\theta<1$, $[E_0,E_1]_\theta=E_0$.
            \item[4.-] $E_0\cap E_1$ is dense in $[E_0,E_1]_\theta$.
            \item[5.-] $[E_0,E_1]_j$ is a closed subspace of $E_j$ with coincidence of the norm in $[E_0,E_1]_j$, $j=0,1$.
            \item[6.-] If $(E_0,E_1)$ is a regular couple and at least one of $E_0$ or $E_1$ is reflexive, then $$\left([E_0,E_1]_\theta\right)^*=[E_0^*,E_1^*]_\theta,\,\theta\in (0,1).$$
            \item[7.-]  If at least one of $E_0$ or $E_1$ is reflexive, then the space $[E_0,E_1]_\theta,\,\theta\in (0,1),$ is reflexive
            \item[8.-] Let $s_0,s_1,\theta\in (0,1)$, and define $s(\theta):=(1-\theta)s_0+\theta s_1$. Then, $$\left[[E_0,E_1]_{s_0},[E_0,E_1]_{s_1}\right]_\theta=[E_0,E_1]_{s(\theta)}.$$
            \item[9.-] Let $T$ a bounded linear operator such that $T:E_0\to F_0$ continuously and $T:E_1\to F_1$ compactly. If $E_0=E_1$ or $F_0=F_1$ or $E_1$ is UMD, then $T:[E_0,E_1]_\theta\to [F_0,F_1]_\theta$ compactly.
         \end{enumerate}
\end{Teor}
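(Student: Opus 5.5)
This theorem collects standard facts from the abstract theory of the complex method (Calderón, and the monographs of Bergh--Löfström and Triebel). The plan is to derive the elementary items 1--5 directly from the definition of $\mathfrak{F}(\overline{E})$, and to invoke the classical deep results for 6--9, indicating the mechanism in each case.

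\textbf{Items 1--5.}
\begin{itemize}
\item For 1, given $f\in\mathfrak{F}(E_0,E_1)$, the function $g(z):=f(1-z)$ lies in $\mathfrak{F}(E_1,E_0)$. It has the same norm and satisfies $g(1-\theta)=f(\theta)$. Hence the two norms coincide.
\item For 3, with $E_0=E_1=E$ the maximum principle applied to $\phi\circ f$, with $\phi\in E^*$, gives $\norm{f(\theta)}_E\le\norm{f}_{\mathfrak{F}}$. Conversely, for $x\in E$ take $f(z)=e^{\delta(z-\theta)^2}x$. It decays on the boundary lines and its norm tends to $\norm{x}$ as $\delta\to0$.
\item For 4, the standard argument approximates $f\in\mathfrak{F}$ by $e^{\delta(z-\theta)^2}\sum_k x_k e^{\lambda_k z}$ with $x_k\in E_0\cap E_1$. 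This uses a Fourier-series/Poisson-kernel regularization on the strip, and I would cite Bergh--Löfström, Thm.~4.2.2.
\item For 5, the boundary maximum principle (via the Poisson representation of the bounded harmonic function $\log\norm{f}$ on the strip) gives $\norm{f(j)}_{E_j}\le\sup_t\norm{f(j+it)}_{E_j}$. The reverse inequality uses the same Gaussian trick, and closedness follows.
\item For 2, write $x\in[E_0,E_1]_{\theta_0}$ as $f(\theta_0)$. Since $E_1\hookrightarrow E_0$, the space $E_0+E_1$ equals $E_0$, and interpolating again via reiteration-type estimates places $f(\theta_1)$-type elements appropriately. The cleanest route is to obtain it as a consequence of 8 together with $[E_0,E_1]_{\theta_0}\hookrightarrow E_0$, so I would prove 8 first.
\end{itemize}

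\textbf{Items 6--8.} These are the genuinely deep parts, and I would cite them.
\begin{itemize}
\item Item 6 is Calderón's duality theorem. Regularity (density of $E_0\cap E_1$ in both spaces) identifies $(E_0\cap E_1)^*$ with $E_0^*+E_1^*$. One then pairs $x=f(\theta)$ against elements of the upper method $[E_0^*,E_1^*]^\theta$ and uses Cauchy's formula on the strip. Reflexivity of one space ensures the upper and lower methods agree, following Bergh--Löfström, Thm.~4.5.1 and Cwikel's refinement.
\item Item 7 follows from 6 applied twice, which gives $[E_0,E_1]_\theta^{**}=[E_0,E_1]_\theta$ after reducing to a regular couple via the closure of the intersection.
\item Item 8 is the reiteration theorem (Cwikel's version removes density hypotheses). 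One inclusion composes holomorphic maps via the affine change $z\mapsto(1-z)s_0+zs_1$. The other uses 6 or a direct Poisson-kernel estimate.
\end{itemize}

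\textbf{Item 9 (main obstacle).} Compactness is the hardest part: the one-sided compactness problem for the complex method is open in general. So I would only treat the stated cases.
\begin{itemize}
\item When $E_0=E_1$ or $F_0=F_1$, I would use the results of Cwikel (1992) and Cobos--Kühn--Schonbek, which reduce to the real method (where compactness is known) through an intermediate approximation argument.
\item In the UMD case, I would argue via the extension to the complex method using the Hilbert-transform boundedness on the strip boundary.
\end{itemize}
I would cite those works rather than reproduce them, since a self-contained proof is beyond a preliminaries section.
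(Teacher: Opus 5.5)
Your plan follows the paper's approach. The paper gives no argument of its own and simply refers to Bergh--L\"ofstr\"om (Theorems 4.2.1, 4.2.2, 4.5.1) and a second reference, so citing items 6--9 is in line with it.

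The genuine gap is in item 5, which you treat as elementary. The inequality $\|x\|_{[E_0,E_1]_j}\le\|x\|_{E_j}$ does not follow from ``the same Gaussian trick'', for two reasons.
\begin{enumerate}
\item For a general $x\in[E_0,E_1]_j$, the function $e^{\delta(z-j)^2}x$ is not in $\mathfrak{F}(\overline{E})$ at all. Its values on $\operatorname{Re}z=1-j$ must lie in $E_{1-j}$, and $x$ need not.
\item Even for $x\in E_0\cap E_1$, it only gives $\|x\|_{[E_0,E_1]_j}\le\max\{\|x\|_{E_j},e^{\delta}\|x\|_{E_{1-j}}\}$, which is not the claim.
\end{enumerate}
Two ingredients are missing. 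First, you need a damping factor for the opposite line. For $j=0$ and $x\in E_0\cap E_1$, take $f(z)=e^{\delta z^2-\lambda z}x$. Then $\sup_t\|f(it)\|_{E_0}\le\|x\|_{E_0}$ and $\sup_t\|f(1+it)\|_{E_1}\le e^{\delta-\lambda}\|x\|_{E_1}$, which is at most $\|x\|_{E_0}$ for $\lambda$ large. Second, you need the density of item 4 to pass from $E_0\cap E_1$ to all of $[E_0,E_1]_j$. This uses the bound $\|x\|_{E_j}\le\|x\|_{[E_0,E_1]_j}$, which is trivial since $j=j+i0$ lies on the boundary line. No maximum principle is needed there, and $\log\|f\|$ is subharmonic, not harmonic. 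Only after both steps do you have the isometry, and hence closedness.

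Two smaller points, plus a remark on item 9:
\begin{enumerate}
\item In item 2, item 8 as stated only covers $s_0,s_1\in(0,1)$. Combining it with $[E_0,E_1]_{\theta_0}\hookrightarrow E_0$ therefore needs the endpoint case $s_0=0$ of reiteration (Cwikel's version). It also needs the observation $[E_0,E_1]_{\theta_0}\subset[E_0,E_1]_0=\overline{E_1}^{E_0}$. A direct proof is simpler. Let $x=f(\theta_0)$ and $\alpha=(1-\theta_0)/(1-\theta_1)$, and set $g(z)=e^{\delta(z-\theta_1)^2}f(1-\alpha+\alpha z)$. Then $g(\theta_1)=x$, and $g$ keeps the $E_1$-bound on $\operatorname{Re}z=1$. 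On $\operatorname{Re}z=0$ it takes values in $E_0+E_1=E_0$ (equivalent norms), bounded by the three lines lemma in $E_0+E_1$.
\item In item 7, ``apply 6 twice'' is not immediate, because the dual couple of a regular couple with one reflexive member need not be regular. For example, $(\ell^2,\ell^1)$ has dual couple $(\ell^2,\ell^\infty)$, and $\ell^2$ is not dense in $\ell^\infty$. After replacing $E_1^*$ by the closure of $E_0^*\cap E_1^*$, you would still have to identify the resulting bidual with $[E_0,E_1]_\theta$ through the canonical map. It is cleaner to cite the reflexivity result directly, as the paper does.
\item In item 9, the case $E_0=E_1$ is elementary via $\|y\|_{[F_0,F_1]_\theta}\le\|y\|_{F_0}^{1-\theta}\|y\|_{F_1}^{\theta}$ applied to differences $Tx_k-Tx_l$. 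No reduction to the real method is needed.
\end{enumerate}
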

Detailed proofs of these facts can be found in \cite[Theorem~4.2.1, Theorem~4.2.2, Theorem~4.5.1]{BerghLofstrom1976} and \cite[Proposition~IV.1.8, Theorem~IV.5.4, Theorem~IV.5.6]{GarciaSaez2024}.

\subsection{Generalized Orlicz spaces}
In this subsection we introduce the main framework for Generalized Orlicz spaces. The definitions are taken from \cite{campos2024}, which in turn are taken from \cite{harjuleto2019, harjuleto2023}.
\begin{Defi}\em
Let $\Omega\subseteq \R^n$ an open set. 
     A function $A:\Omega\times [0,\infty)\to [0,\infty)$ is called a \textit{(generalized) Young function}, and we will denote it as $A\in \Phi(\Omega)$, if:\begin{itemize}
    \item The function $x\mapsto A\left(x,f(|x|)\right)$, is measurable in $\Omega$ for every measurable function $f$ defined on $\Omega$.
    \item $A(x,\cdot)$ is increasing, convex and satisfies $$A(x,0)=0,\,\lim_{t\to 0^+}A(x,t)=0,\,\lim_{t\to \infty}A(x,t)=\infty,$$ a.e. in $\Omega$.
\end{itemize}
\end{Defi}
\begin{Defi}\em
    Let $\Omega\subseteq\R^n$ and open set and $A\in \Phi(\Omega)$. The \textit{left inverse function} of $A$, denoted as $A^{-1}$, is defined as $$A^{-1}(x,t):=\operatorname{inf}\{s\geq 0:A(x,s)\geq t\}.$$
\end{Defi}
We will require some extra hypothesis over the $\Phi$-functions which arise naturally in the Orlicz-spaces theory. We collect them. Let $p>1,q<\infty$. We say that $A$ satisfies:
\begin{itemize}
    \item[$(Inc)_p$] The function $t\mapsto t^{-p}A(x,t)$ is increasing for a.e. $x\in \Omega$;
    \item[$(Dec)_q$] The function $t\mapsto t^{-q}A(x,t)$ is decreasing for a.e. $x\in \Omega$;
    \item[(A0)] There exists a constant $\beta\in (0,1]$ such that $$A(x,\beta)\leq 1\leq A(x,1/\beta),$$ for a.e. $x\in \Omega$;
    \item[(A1)] There exists a constant $\beta\in (0,1]$ such that $$\beta A^{-1}(x,t)\leq A^{-1}(y,t),$$ for every $1\leq t\leq 1/|B|$, a.e. $x,y\in B\cap \Omega$ and every ball $B$ with $|B|\leq 1$;
    \item[(A2)] If for every $s>0$ there exists a constant $\beta\in (0,1]$ and $f\in L^1(\Omega)\cap L^\infty(\Omega)$, with $f\geq 0$, such that for a.e. $x,y\in \Omega$, $$\beta A^{-1}(x,t)\leq A^{-1}\left(y,t+f(x)+f(y)\right),$$ whenever $0\leq t\leq s$.
\end{itemize}
Hypothesis $(Inc)_p$, $(Dec)_q$ are related to the growth of the $\Phi$-functions. Note that the notions of $(Inc)_p$ and $(Dec)_q$ are not invariant under equivalence of $\Phi$-functions, unlike the weaker notions $(aInc)_p$ and $(aDec)_q$, which changes the hypothesis to almost increasing and decreasing, respectively. However, any $(aInc)_p$ or $(aDec)_q$ function could be improved to $(Inc)_p$ or $(Dec)_q$, respectively, by \cite[Lemma 2.2]{harjuleto2016}. If we do not need to specify the values $p$ and $q$, we will say that a function $A\in \Phi(\Omega)$ is $(Inc)$ (respectively $(Dec)$) if it is $(Inc)_r$ (respectively $(Dec)_r)$ for some $\infty>r>1$.
These conditions are also related with the well known $\Delta_2$-condition, also known as \textit{doubling} condition. We say that a $\Phi$-function $A$ is doubling if there exists a positive constant $C$ such that $$A(x,2t)\leq CA(x,t),\,\text{a.e.}\,x\in \Omega,\,t\geq 0.$$ In fact, it is proven in \cite[Lemma 2.2.6(b), Corollary 2.4.11]{harjuleto2019} that $A$ is doubling if and only if is $(Dec)_q$ for some $q<\infty$, and that $A$ satisfies $(Inc)_p$ for some $p>1$ if and only if its conjugate function is doubling. Here, we define the conjugate function of $A$ as $$A^*(x,t):=\sup_{s\geq 0}\{st-A(x,s)\}.$$

Assumptions $(A1)$ and $(A2)$ are technical conditions in order to apply results from harmonic analysis to Orlicz-spaces theory. Conditions $(A0)$ is also relevant as imposes us to work on some sort of unweighted $\Phi$-functions. In fact, it is equivalent to having that $A^{-1}(x,1)\approx 1$. We say that this is related with unweighted functions since if we take for example the function $A(x,t)=w(x)B(t)$, where $B$ is a classical Young function and $w$ is a weight, condition $(A0)$ imposes that $w$ must be comparable with $1$.

Under these hypothesis over the $\Phi$-functions, we introduced the \textit{generalized Orlicz spaces}.
\begin{Defi}
    Let $\Omega\subseteq\R^n$ an open set and $A\in \Phi(\Omega)$. We define tha \textit{modular} $$J_A(f):=\int_\Omega A(x,|f(x)|)\,dx.$$ We define the \textit{generalized Orlicz space} $L^A(\Omega)$, as the space of measurable functions $f:\Omega\to \R$ such that for some $\rho>0$, $$J_A(\rho f)<\infty.$$ The space becomes a normed space endowed with the so called \textit{Luxemburg norm} $$\norm{f}_{L^A(\Omega)}:=\inf \Bigg\{\rho>0:J_A(\rho^{-1}f)\leq 1\Bigg\}.$$
    If the target space is $\R^n$ instead of $\R$, we denote the spaces by $L^A(\Omega;\R^n)$.
\end{Defi}
By \cite[Theorem 2.3.13]{Diening}, generalized Orlicz spaces are Banach spaces. Additionally, the $p$ and $q$ growth conditions gives us a way to compare the classical Lebesgue spaces with the generalized Orlicz spaces. In fact, if $A\in \Phi(\Omega)$ satisfies $(Inc)_p, (Dec)_q$ and $(A0)$, for $1<p<q<\infty$, $$L^p(\R^n)\cap L^q(\R^n)\xhookrightarrow{}L^A(\R^n)\xhookrightarrow{}L^p(\R^n)+L^q(\R^n),$$ i.e., under this assumptions, generalized Orlicz spaces are intermediate spaces with respect to the couple $(L^p,L^q)$. Abstract interpolation theory suggest that there must be an abstract interpolation functor $\mathcal{F}$ such that $\mathcal{F}(L^p,L^q)=L^A$. This is partially true by means of the $\pm$-method as we will see in section 4.

Examples of $\Phi$-functions are the following:
\begin{itemize}
    \item $A(x,t)=\frac{1}{p}t^p$ for $1\leq p<\infty$. The space $L^A(\Omega)$ is just the classical Lebesgue spaces $L^p(\Omega)$.
    \item Let $1<p<\infty$ and $A(x,t)=w(x)t^p$, where $w$ is a weight in the Muckenhoupt class $A_p$ (see \cite{Grafakos2008}). Then, $L^A=L^p_w$, the weighted Lebesgue spaces. Note that since we are dealing with the $(A0)$ condition, those spaces are not under our scope.
    \item Consider a function $p(x):\R^n\to [1,\infty]$ such that $p_-:=\operatorname{ess inf} p(x)>1$ and $p^+:=\operatorname{ess sup}p(x)<\infty$. Then, for $A(x,t)=t^{p(x)}$, the space $L^A$ is the variable exponent Lebesgue space $L^{p(x)}$, which have been widely studied. In this case $A$ satisfies $(aInc)_{p_-}$ and $(aDec)_{p^+}$, and in order to verify $(A1)$ and $(A2)$ we require $p$ to verify that $1/p$ is log-H\"older continuous and satisfies the Nekvinda's condition (see \cite{Uribe2018}).
    \item Let $a(x,t):=pt^{p-2}+qw(x)t^{q-2}$ for some $1<p<q<\infty$, where $w\in L^\infty(\R^n)\cap C^{0,n(q-p)/p}(\R^n)$. Then, $A(x,t):=\int_0^t a(x,s)s\,ds,$ satisfies $(Inc)_p, (Dec)_q, (A0), (A1)$ and $(A2)$ by \cite[Propositions 7.2.1 and 7.2.2]{harjuleto2019}. The space $L^A$ is known as \textit{double phase} Lebesgue space.
\end{itemize}
Concerning the functional analysis of generalized Orlicz spaces, it deeply relies on the hypothesis over the $\Phi$-functions. First of all, concerning the dual of $L^A$, is it not necessarily true that $(L^A(\Omega))^*=L^{A^*}(\Omega)$, which is what we could expect. In \cite[Section 2.7]{Diening} there is a nice discussion the problem of characterization of the dual space of $L^A$. However, under our assumptions, we can ensure that both spaces, $(L^A)^*$ and $L^{A^*}$, coincide. In particular, if $A$ satisfies $(A0)$ and is doubling, we have that by \cite[Theorem 2.7.14]{Diening}, $L^{A^*}(\Omega)=\left(L^A(\Omega)\right)^*.$

Moreover, under the same hypothesis, by \cite[Theorem 4.5]{harjuleto2016}, we have the density of the test functions $C_c^\infty(\Omega)$ on $L^A(\Omega)$; and by \cite[Theorem 4.6]{harjuleto2016}, under the extra hypothesis of $A^*$ being doubling, we have that $L^A(\Omega)$ is reflexive and seperability. Hence, for the reflexivity and separability of $L^A$ is enough to have $A$ satisfying $(A0)$, $(Inc)_p$ and $(Dec)_q$, for some $1<p, q<\infty$.

Analogous to what is done for Lebesgue spaces, we can defined generalized Orlicz-Sobolev spaces. \begin{Defi}
    Let $A\in \Phi(\Omega)$. A function $u\in L^A(\Omega)$ belongs to the \textit{generalized Orlicz-Sobolev space} $W^{1,A}(\Omega)$ if its weak gradient $\nabla u$ belongs to $L^A(\Omega)$. We define a modular $J_{W^{1,A}}$ on $W^{1,A}(\Omega)$ as $$J_{W^{1,A}}(u):=\int_\Omega A(x,|u(x)|)\,dx+\int_\Omega A(x,|\nabla u(x)|)\,dx,$$ which induces the norm $$\norm{u}_{W^{1,A}}:=\inf\Big\{\rho>0: J_{W^{1,A}}(\rho^{-1}u)\leq 1\Big\},$$ which is equivalent to the norm $$\norm{u}_{L^A}+\norm{\nabla u}_{L^A},$$ where $\norm{\nabla u}_{L^A}$ must be understood as $\norm{|\nabla u|}_{L^A}.$
\end{Defi}
As closed subspaces of $L^A$, generalized Orlicz-Sobolev inherit most of the properties from $L^A$ under the same assumptions over the function $A$. Hence, if $A$ is $(A0)$, $(Inc)_p$ for some $p>1$ and $(Dec)_q$ for some $q<\infty$, we have that $W^{1,A}(\Omega)$ is reflexive and separable. 
For the density of test functions, we have to impose the harmonic analysis related assumptions. In particular, if $A$ satisfies $(A0)-(A2)$, and is doubling, $C_c^\infty(\R^n)$ is dense in $W^{1,A}(\R^n)$ \cite[Theorem 6.5]{harjuleto2016}.

Now, if we define $W^{1,A}_0(\Omega)$ as the closure of $C_c^\infty(\Omega)$ for the $W^{1,A}(\Omega)$-norm, we have that by \cite[Theorem 6.8, Lemma 6.9, Theorem 6.10]{harjuleto2016} and \cite[Lemma 6.1.10]{harjuleto2019}:
\begin{itemize}
    \item If $A$ satisfies $(A0)$, $(Inc)_p, (Dec)_q$, then $W^{1,A}_0(\Omega)$ is reflexive and separable.
    \item If $A$ satisfies $(A0)$ and $(Inc)_p$, $W^{1,A}_0(\Omega)\xhookrightarrow{}W^{1,p}_0(\Omega)$.
    \item If $u\in W^{1,A}_0(\Omega)$, the extension by zero of $u$ to the whole $\R^n$ belongs to $W^{1,A}(\R^n)$.
    \item If $u\in W^{1,A}(\Omega)$ with support contained in $\Omega$, $u\in W^{1,A}_0(\Omega)$.
\end{itemize}
Another key property of generalized Orlicz spaces that we will need is the so called \textit{UMD} property. For details on UMD spaces we refer to \cite{Burkholder, Hytonen, Hytonen2, Rubio}. The important detail here is that most of the widely studied spaces on PDEs are UMD and hence it is the easiest way to prove that an operator is compact when acting as an admisible operator with one-sided compactness, as it is stated in Theorem \ref{ComplexProps}(9). It is well known that classical Orlicz spaces are UMD if and only if they are reflexibe. For generalized Orlicz spaces we require some extra assumptions that fall under our scoop. The following result was proven in \cite{Lindemulder}:
\begin{Teor}\label{UMD}
    Let $\Omega\subseteq \R^n$ an open set and $A\in \Phi(\Omega)$ such that both $A$ and its conjugate $A^*$ are doubling. Then, the space $L^A(\Omega)$ has the UMD property. Moreover, $W^{1,A}_0(\Omega)$ has the UMD property.
\end{Teor}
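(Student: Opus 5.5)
The first claim, that $L^A(\Omega)$ is UMD, I would obtain by representing $L^A(\Omega)$ as a complex interpolation space between two reflexive Lebesgue spaces and then using that the UMD property is preserved by complex interpolation. Concretely, since $A$ is doubling it is $(Dec)_q$ for some $q<\infty$, and since $A^*$ is doubling, $A$ is $(Inc)_p$ for some $p>1$ (both by \cite[Lemma 2.2.6(b), Corollary 2.4.11]{harjuleto2019}). By \cite[Lemma 2.2]{harjuleto2016} we may replace almost-monotonicity by genuine monotonicity after passing to an equivalent $\Phi$-function; this changes $L^A$ only up to an equivalent norm, and UMD is invariant under renorming. With $(Inc)_p$ and $(Dec)_q$ in hand, $L^A(\Omega)$ is a Banach lattice of measurable functions that is $p$-convex and $q$-concave, with $1<p\le q<\infty$.

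The core step uses Rubio de Francia's observation that a $p$-convex, $q$-concave Banach function lattice with $1<p\le q<\infty$ is UMD; see \cite{Rubio}. To make this concrete, I would express $L^A$ through the Calderón product. By Lozanovskii factorization (or directly, for lattices of this type) one can write $L^A=(L^2)^{1-\theta}X^\theta$ for some $\theta\in(0,1)$ and a suitable reflexive Banach function lattice $X$ that is itself $p_1$-convex and $q_1$-concave with $1<p_1\le q_1<\infty$. Because both $L^2$ and $X$ have order-continuous norms, Calderón's theorem identifies $(L^2)^{1-\theta}X^\theta$ with $[L^2,X]_\theta$. The boundedness of the Hilbert transform on $L^2(X)$ for this lattice case then follows from Bourgain's lattice characterization: a Banach lattice is UMD iff the lattice maximal/Hilbert transform is bounded. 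That closes this part of the argument.

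I expect this identification to be the main obstacle: it needs the $p$-convexity and $q$-concavity constants to be uniform in $x$, which is exactly what $(Inc)_p$ and $(Dec)_q$ with fixed exponents provide. Concretely, $p$-convexity of $L^A$ amounts to convexity of $t\mapsto A(x,t^{1/p})$, which holds after the improvement of \cite[Lemma 2.2]{harjuleto2016}; concavity is handled similarly. If this route fails, the fallback is simply to cite \cite{Lindemulder}, where exactly this statement is proved.

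For the second claim, concerning $W^{1,A}_0(\Omega)$, I would note that the map $u\mapsto(u,\nabla u)$ is an isometric embedding (for the equivalent norm $\norm{u}_{L^A}+\norm{\nabla u}_{L^A}$) of $W^{1,A}_0(\Omega)$ onto a closed subspace of $L^A(\Omega)\times L^A(\Omega;\R^n)\cong L^A(\Omega)^{n+1}$. Finite products of UMD spaces are UMD, closed subspaces of UMD spaces are UMD, and UMD is preserved under isomorphism. This gives the conclusion.
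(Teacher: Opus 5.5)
\textbf{The first claim has a genuine gap.} The paper gives no proof of this statement; it just quotes \cite{Lindemulder}. Your fallback is therefore exactly the paper's route, but the self-contained argument you put first for $L^A(\Omega)$ does not work.

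Its core claim is that a $p$-convex, $q$-concave Banach function lattice with $1<p\le q<\infty$ is UMD. That is false. For Banach lattices, being $p$-convex and $q$-concave for some $1<p\le q<\infty$ is equivalent to super-reflexivity. Bourgain constructed super-reflexive Banach lattices that fail UMD.

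This is not Rubio de Francia's result. What Bourgain and Rubio de Francia proved is an equivalence: a Banach function space $X$ is UMD if and only if the lattice Hardy--Littlewood maximal operator is bounded on $L^r(\R;X)$ and on $L^{r'}(\R;X^*)$. That boundedness is the whole content, and it does not follow from convexity and concavity. Invoking ``Bourgain's lattice characterization'' restates the equivalence but supplies no estimate.

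The Calder\'on-product detour does not repair this. Writing $L^A=[L^2,X]_\theta$ (Pisier/Lozanovskii plus Calder\'on) transfers UMD only if the endpoint $X$ is already UMD. All you know about $X$ is that it is again $p_1$-convex and $q_1$-concave, so the argument is circular.

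You also misplace the difficulty. Uniformity in $x$ of the convexity/concavity constants is supplied by $(Inc)_p$ and $(Dec)_q$. The real obstruction is that these lattice properties are too weak. A genuine proof must use the specific Musielak--Orlicz structure: the modular is an $x$-integral of Orlicz modulars whose $\Delta_2$ constants for $A(x,\cdot)$ and $A^*(x,\cdot)$ are uniform in $x$. That is what the cited work exploits.

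\textbf{The second claim is handled correctly,} and you supply the justification the paper leaves implicit. The map $u\mapsto(u,\nabla u)$ sends $W^{1,A}_0(\Omega)$ isomorphically onto a closed subspace of $L^A(\Omega)\times L^A(\Omega;\R^n)$; it is closed because $W^{1,A}_0(\Omega)$ is complete. The target is isomorphic to $L^A(\Omega)^{n+1}$, since $|v|$ and $\sum_i|v_i|$ are pointwise comparable. UMD passes to finite direct sums, closed subspaces and isomorphic copies. So once the first claim is taken from \cite{Lindemulder}, the second follows exactly as you say.
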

We now recall several results that resembles the classical Sobolev theory in the Orlicz setting, that will be very useful for our purposes.
\begin{Prop}[Poincar\'e inequality]\label{PoincareLocal}
    Let $\Omega$ a bounded domain and $A\in \Phi(\Omega)$ satisfying $(A0)$ and $(A1)$. Then, for every $u\in W^{1,p}_0(\Omega)$, $$\norm{u}_{L^A(\Omega)}\leq \operatorname{diam}(\Omega)\norm{\nabla u}_{L^p(\Omega)}.$$
\end{Prop}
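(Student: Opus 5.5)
By density of $C_c^\infty(\Omega)$ in $W^{1,p}_0(\Omega)$, and since both sides are continuous along approximating sequences (the right-hand side trivially; the left-hand side via Fatou's lemma for the modular $J_A$ together with a.e. convergence of a subsequence), I will first prove the inequality for $u\in C_c^\infty(\Omega)$. Extending $u$ by zero, fix a unit vector $e$ and write $u(x)=-\int_0^\infty \partial_e u(x+te)\,dt$. Because $u$ vanishes outside $\Omega$, the integrand is supported on a segment of length at most $d:=\operatorname{diam}(\Omega)$. Averaging over $e\in S^{n-1}$ then gives the classical pointwise bound
$$|u(x)|\le \frac{1}{n\omega_n}\int_\Omega \frac{|\nabla u(y)|}{|x-y|^{n-1}}\,dy=:c_n I_1(|\nabla u|\chi_\Omega)(x),\qquad x\in\Omega .$$
Alternatively, I can keep a single direction $e$ and obtain $|u(x)|\le \int_{\{t:\,x+te\in\Omega\}}|\nabla u(x+te)|\,dt$, which exposes the factor $d$ directly.

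The second step is to pass from this pointwise bound to the Luxemburg norm. The target is $J_A\big(u/(d\,\|\nabla u\|_{L^p})\big)\le 1$, and by homogeneity I may normalise $\|\nabla u\|_{L^p(\Omega)}=1$. On the one-dimensional representation I will apply Jensen's inequality for the convex function $A(x,\cdot)$ on the segment of length $\le d$:
$$A\Big(x,\tfrac{|u(x)|}{d}\Big)\le \frac1d\int A\big(x,|\nabla u(x+te)|\big)\,dt .$$
The task is then to replace $A(x,\cdot)$ by $A(x+te,\cdot)$ inside the integral, so that integrating in $x$ and using Fubini returns a modular of $\nabla u$. This replacement is exactly what condition (A1) provides, in its equivalent form $A(x,\beta t)\le A(y,t)+1$ on small balls. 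Condition (A0) is used to absorb the additive constants, since it makes $A(\cdot,t)\approx$ const for $t\approx 1$.

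The main obstacle is the last step: converting the resulting control by $\nabla u$ into control by $\|\nabla u\|_{L^p}$ rather than by its $L^A$ modular, with constant exactly $d$. What I will try first is to use the Sobolev gain. On bounded $\Omega$, $I_1$ maps $L^p$ into $L^{p^*}$ (or into an exponential class when $p\ge n$), and I will then try to show, using (A0) and (A1) on balls of measure at most $1$, that $A(x,t)\lesssim t^{p^*}+1$ locally uniformly, which would give $L^{p^*}(\Omega)\hookrightarrow L^A(\Omega)$. If a structural constant appears along the way, I will try to remove it by rescaling to balls of measure at most $1$. I expect this constant-tracking, and the precise role of (A1) in comparing $A$ with a power of $t$, to be the delicate point. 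If the comparison with $t^{p^*}$ fails, the statement as written cannot follow from (A0) and (A1) alone. In that case I will fall back on the natural modular form $\|u\|_{L^A}\lesssim d\,\|\nabla u\|_{L^A}$, obtained from the Jensen/Fubini argument above.
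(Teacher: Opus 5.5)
\textbf{Main route.} As printed, the statement is false, and your main route is where this shows up. Conditions (A0) and (A1) put no upper bound on the growth of $A$, so the comparison $A(x,t)\lesssim t^{p^*}+1$ you hope to extract cannot hold. For $1<p<n$ and $q>np/(n-p)$, the function $A(x,t)=t^q$ satisfies (A0) and (A1) (the latter trivially, being independent of $x$), yet $W^{1,p}_0(\Omega)\not\hookrightarrow L^q(\Omega)$. You should state this counterexample outright rather than leave it as a contingency.

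The paper gives no argument of its own; it cites \cite[Theorem 6.2.8]{harjuleto2019}. That result is the Poincar\'e inequality $\|u\|_{L^A(\Omega)}\le c\,\operatorname{diam}(\Omega)\,\|\nabla u\|_{L^A(\Omega)}$ with a structural constant $c$. The sentence right after the proposition (that $\|\nabla u\|_{L^A(\Omega)}$ is an equivalent norm on $W^{1,A}_0(\Omega)$) confirms that $L^p$ is a misprint for $L^A$.

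\textbf{Fallback route.} Your fallback aims at this intended statement, and your first step (density, Fatou, the pointwise bound $|u|\le c\,I_1(|\nabla u|\chi_\Omega)$) is the right start. The Jensen--Fubini step, however, does not close.
\begin{itemize}
\item The equivalent form of (A1) gives $A(x,\beta t)\le A(y,t)+1$ only for $x,y$ in a ball $B$ with $|B|\le 1$, \emph{and} only in the range $A(y,t)\le 1/|B|$.
\item Along your segment, $x$ and $x+te$ can be at distance $d$, and $|\nabla u(x+te)|$ is unbounded, so neither restriction is met.
\item Concretely, take $A(x,t)=t^{p(x)}$ with $p$ smooth and nonconstant; it satisfies (A0) and (A1). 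Then $A(x,\beta t)/A(y,t)=\beta^{p(x)}t^{p(x)-p(y)}\to\infty$ as $t\to\infty$ whenever $p(x)>p(y)$.
\item After Fubini you would need $\int A(y-te,|g(y)|)\,dy\lesssim \int A(y,|g(y)|)\,dy+C$. This fails for $g$ with large values concentrated where $p(\cdot-te)>p$.
\end{itemize}
Your argument does work when $A$ is independent of $x$, even with constant exactly $d$. That is precisely the case in which (A1) carries no information.

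\textbf{Missing idea.} (A1) can only be applied to averages over $n$-dimensional balls, never to pointwise values along lines. When $\|f\|_{L^A}\le 1$, the averages $|B|^{-1}\int_B|f|\,dy$ automatically lie, up to a constant, in the range where (A1) is available. This gives a Jensen-type inequality and hence the uniform boundedness of the averaging operators $T_k$ on $L^A$. The cited proof, which the paper mirrors in its fractional Poincar\'e inequality, then proceeds in two steps:
\begin{itemize}
\item split the truncated potential dyadically, $\int_{B(x,\delta)}|f(y)|\,|x-y|^{1-n}\,dy\le c\,\delta\sum_{k\ge 0}2^{-k}T_{k+k_0}f(x)$ with $\delta=\operatorname{diam}(\Omega)$;
\item sum the series using $\|T_kf\|_{L^A}\le C\|f\|_{L^A}$.
\end{itemize}
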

\noindent{}\textbf{Proof:} See \cite[Theorem 6.2.8]{harjuleto2019}. \qed

This proposition yields $\norm{\nabla u}_{L^A(\Omega)}$ is an equivalent norm to $W^{1,A}_0(\Omega)$, and combined with the relationships of generalized Orlicz spaces with Lebesgue spaces, it implies the following:

\begin{Coro}\label{inclusionesLocal}
    Let $A\in \Phi(\Omega)$ satisfying $(A0), (A1), (Inc)_p,$ and $(Dec)_q$ for some $1<p\leq q<\infty$. Then, $$W^{1,q}_0(\Omega)\xhookrightarrow{}W^{1,A}_0(\Omega)\xhookrightarrow{}W^{1,p}_0(\Omega).$$
\end{Coro}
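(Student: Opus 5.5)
The plan is to prove the two embeddings separately. On the bounded set $\Omega$ we combine the Lebesgue comparison of $L^A$ with the Poincaré inequality (Proposition \ref{PoincareLocal}), so that only gradient norms need to be compared. Throughout, by Proposition \ref{PoincareLocal}, $\norm{\nabla u}_{L^A(\Omega)}$ is an equivalent norm on $W^{1,A}_0(\Omega)$. By the classical Poincaré inequality, $\norm{\nabla u}_{L^r(\Omega)}$ is an equivalent norm on $W^{1,r}_0(\Omega)$ for $r=p,q$. Hence it suffices to prove
$$\norm{\nabla u}_{L^p(\Omega)}\lesssim \norm{\nabla u}_{L^A(\Omega)}\lesssim \norm{\nabla u}_{L^q(\Omega)}$$
for $u\in C_c^\infty(\Omega)$, and then pass to the closures.

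The key step is the local Lebesgue comparison $L^q(\Omega)\hookrightarrow L^A(\Omega)\hookrightarrow L^p(\Omega)$, which holds because $|\Omega|<\infty$. I will show it directly from $(A0)$, $(Inc)_p$ and $(Dec)_q$.

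For $L^q(\Omega)\hookrightarrow L^A(\Omega)$, take $\beta$ from $(A0)$. For $t\ge\beta$, $(Dec)_q$ gives $A(x,t)\le (t/\beta)^qA(x,\beta)\le \beta^{-q}t^q$. For $t\le\beta$, monotonicity gives $A(x,t)\le A(x,\beta)\le 1$. Hence $A(x,t)\le 1+\beta^{-q}t^q$, so for $\norm{f}_{L^q}\le1$ we get $J_A(\lambda f)\le |\Omega|+\beta^{-q}\lambda^q$. Choosing $\lambda$ small, depending on $|\Omega|$ and $\beta$, makes this at most $1$ after a standard rescaling using convexity: replace $f$ by $\lambda f$ with $\lambda\le 1$, and use $A(x,\lambda t)\le\lambda A(x,t)$ to absorb the constant term $|\Omega|$.

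For $L^A(\Omega)\hookrightarrow L^p(\Omega)$, $(Inc)_p$ together with $A(x,1/\beta)\ge1$ gives $t^p\le \beta^{-p}A(x,t)$ for $t\ge1/\beta$. For $t<1/\beta$ we have $t^p\le\beta^{-p}$. Thus $\int_\Omega|f|^p\le\beta^{-p}(|\Omega|+J_A(f))$, which is bounded when $\norm{f}_{L^A}\le1$, and homogeneity gives the norm bound.

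Applying both embeddings componentwise to $|\nabla u|$ yields the displayed chain. Then, since $C_c^\infty(\Omega)$ is dense in each space by definition, the identity extends continuously, and the inclusions of the completions follow. Injectivity is automatic, since everything sits inside $L^1(\Omega)$.

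The main obstacle is making sure the constants are uniform in $x$, which is exactly what $(A0)$ provides, together with handling the non-homogeneity of the modular. The cleanest route through the latter is the unit-ball argument above: show that the unit ball of one space maps into a bounded set of the other, then rescale. Condition $(A1)$ is needed only through Proposition \ref{PoincareLocal}.
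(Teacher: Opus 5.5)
Your proposal is correct and takes essentially the paper's route: the Poincar\'e inequality (Proposition \ref{PoincareLocal}) makes $\|\nabla u\|_{L^A(\Omega)}$ an equivalent norm on $W^{1,A}_0(\Omega)$, and you combine this with the comparison $L^q(\Omega)\hookrightarrow L^A(\Omega)\hookrightarrow L^p(\Omega)$ on the bounded set, which you verify directly from $(A0)$, $(Inc)_p$, $(Dec)_q$ and the convexity bound $A(x,\lambda t)\le\lambda A(x,t)$, before extending by density. As a minor remark, applying that Lebesgue comparison to $u$ and $|\nabla u|$ separately already gives both embeddings, so the Poincar\'e step, and with it $(A1)$, is not actually needed.
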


Also, a Sobolev embedding is available:
\begin{Prop}[Sobolev embedding]\label{SobolevEmbedding}
    Let $\Omega$ be a bounded domain with Lipschitz boundary. Assume that $A\in \Phi(\Omega)$ satisfies $(A0)-(A2)$ and $(Dec)_q$ for some $q<n$. Then, if $B\in \Phi(\Omega)$ satisfies that $t^{-1/n}A^{-1}(x,t)\approx B^{-1}$, then $$W^{1,A}_0(\Omega)\xhookrightarrow{}L^B(\Omega).$$
\end{Prop}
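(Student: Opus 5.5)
The strategy is the classical Sobolev argument: a pointwise bound by the Riesz potential of the gradient, followed by the boundedness of the fractional integral $I_1$ from $L^A$ into $L^B$ in the generalized Orlicz setting. By density of $C_c^\infty(\Omega)$ in $W^{1,A}_0(\Omega)$ and the embedding $W^{1,A}_0(\Omega)\xhookrightarrow{}W^{1,1}_0(\Omega)$ (which follows from $(A0)$ and boundedness of $\Omega$), it is enough to prove
\begin{equation*}
\norm{u}_{L^B(\Omega)}\leq C\norm{\nabla u}_{L^A(\Omega)}
\end{equation*}
for $u\in C_c^\infty(\Omega)$. We then pass to limits, using Fatou's lemma for the modular $J_B$. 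Together with the Poincaré inequality (Proposition \ref{PoincareLocal}), this gives the embedding in the full $W^{1,A}_0$-norm.

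\textbf{Step 1 (pointwise bound).} For $u\in C_c^\infty(\Omega)$, extended by zero to $\R^n$, the standard representation gives
\begin{equation*}
|u(x)|\leq c_n\int_{\R^n}\frac{|\nabla u(y)|}{|x-y|^{n-1}}\,dy=c_n' I_1(|\nabla u|)(x).
\end{equation*}
Writing $f:=|\nabla u|\chi_\Omega$, we need to bound $\norm{I_1 f}_{L^B}$ by $\norm{f}_{L^A}$. Extend $A$ to $\R^n$ preserving $(A0)$–$(A2)$ and $(Dec)_q$; harjuleto2019 contains such an extension lemma.

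\textbf{Step 2 (Hedberg-type estimate).} Split $I_1 f(x)$ at a radius $\delta>0$. The near part is bounded by $C\delta\, Mf(x)$, where $M$ is the Hardy–Littlewood maximal operator. For the far part we use the generalized Hölder inequality together with the bound $\norm{\chi_{B}}_{L^{A^*}}\lesssim |B|\,A^{-1}(x,1/|B|)$ on balls. This estimate is where $(A1)$ enters. Summing over dyadic annuli with $(Dec)_q$, $q<n$, we expect
\begin{equation*}
\int_{|x-y|>\delta}\frac{f(y)}{|x-y|^{n-1}}dy\lesssim \int_\delta^\infty A^{-1}(x,r^{-n})\,dr \lesssim \delta\,A^{-1}(x,\delta^{-n})
\end{equation*}
for $\norm{f}_{L^A}\le 1$. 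The convergence of the integral uses $q<n$, since $A^{-1}(x,r^{-n})\gtrsim r^{-n/q}$ decays slower than $r^{-1}$. The far tail $r\ge 1$ is handled with $(A2)$ and $(A0)$ and contributes a bounded amount, which is harmless on bounded $\Omega$. We then choose $\delta$ so that $A^{-1}(x,\delta^{-n})\approx Mf(x)$, i.e. $\delta^{-n}\approx A(x,Mf(x))$, whenever $Mf(x)$ is large. This gives
\begin{equation*}
|I_1 f(x)|\lesssim \frac{Mf(x)}{A(x,Mf(x))^{1/n}}+1\approx B^{-1}\bigl(x,A(x,Mf(x))\bigr)+1,
\end{equation*}
where the last comparison is exactly the hypothesis $t^{-1/n}A^{-1}(x,t)\approx B^{-1}(x,t)$.

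\textbf{Step 3 (conclusion).} Applying $B$ to the estimate of Step 2 yields
\begin{equation*}
B(x,c|I_1f(x)|)\lesssim A(x,Mf(x))+\text{bounded term}.
\end{equation*}
Integrating over the bounded set $\Omega$ and using that $M$ is bounded on $L^A(\R^n)$, we obtain $J_B(cI_1f)\le C$ and hence $\norm{I_1f}_{L^B}\le C$ for $\norm{f}_{L^A}\le1$. The boundedness of $M$ holds under $(A0)$–$(A2)$ and $(Inc)_p$ with $p>1$ (Hästö's theorem). Scaling gives the inequality for general $f$.

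\textbf{Main obstacle.} Since only $(Dec)_q$ is assumed, the boundedness of $M$ on $L^A$ is not directly available; it needs $(Inc)_p$ with $p>1$. If only $(aInc)_1$ holds, I would first try to run the argument with the weak-type or $L^1$-modular version of the Hedberg estimate. If that fails, I would apply the result to $A_\epsilon(x,t)=A(x,t^{1+\epsilon})$-type powers and use $|\nabla (|u|^{\gamma})|\le \gamma|u|^{\gamma-1}|\nabla u|$ to bootstrap, following the proof in harjuleto2019. Controlling the annular sums uniformly in $x$ via $(A1)$ and $(A2)$ is the delicate step.
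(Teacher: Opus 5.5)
The paper does not argue this itself; it cites Harjulehto--H\"ast\"o \cite[Corollary~6.3.4]{harjuleto2019} for exactly these hypotheses. Your Steps 1--3 are the pointwise bound $|u|\lesssim I_1|\nabla u|$, the Hedberg splitting, and then $B(x,c\,I_1f)\lesssim A(x,Mf)+1$. This is a sound route only if $A$ also satisfies $(Inc)_p$ for some $p>1$, because Step 3 integrates $A(x,Mf(x))$ and so needs the maximal operator to be bounded on $L^A$. The proposition assumes only convexity (i.e.\ $(Inc)_1$), $(A0)$--$(A2)$ and $(Dec)_q$ with $q<n$. The missing hypothesis matters. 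In dimension $n\ge 3$, $A(t)=t\log(e+t)$ is $(Dec)_2$ and trivially satisfies $(A0)$--$(A2)$. Yet $M$ is not bounded on $L^A=L\log L$, since $A^*$ is not doubling (equivalently, $A$ is not $(Inc)_p$ for any $p>1$), while the proposition still applies to this $A$. As written, then, you prove a strictly weaker statement. Your ``Main obstacle'' paragraph locates the gap correctly but does not close it. For the paper's later applications, where $(Inc)_p$ with $p>1$ is assumed throughout, your weaker version would suffice.

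Of your two remedies, the power substitution goes the wrong way. Replacing $u$ by $|u|^\gamma$ and using $|\nabla|u|^\gamma|\le\gamma|u|^{\gamma-1}|\nabla u|$ is the device for deducing a $p>1$ inequality from the $p=1$ one. Running it backwards, from an $(Inc)_{1+\epsilon}$ version to the $(Inc)_1$ case, would require controlling $|\nabla u|^{1+\epsilon}$ (or negative powers of $|u|$ when $\gamma<1$) by the $A$-modular of $|\nabla u|$, which $\nabla u\in L^A$ does not provide. The first remedy is the viable one, but it is a genuinely different argument that you still have to supply. You would need a modular \emph{weak-type} estimate for $M$ that does not use $p>1$, and hence one for $I_1$ through your Hedberg bound. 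You would then upgrade weak to strong type for Sobolev functions by Maz'ya's truncation: apply the weak bound to $u_k=\min\{(|u|-2^k)_+,2^k\}$, whose gradients satisfy $|\nabla u_k|=|\nabla u|\,\chi_{\{2^k<|u|\le 2^{k+1}\}}$ a.e.\ and so have disjoint supports. Summing over $k$ then uses that $B$ is doubling (it is $(aDec)_{nq/(n-q)}$), and you must check that the additive error terms coming from $(A0)$ and $(A2)$ stay summable over the levels. There is also a smaller slip in Step 2. Convergence of $\int_\delta^\infty A^{-1}(x,r^{-n})\,dr$ needs the \emph{upper} bound $A^{-1}(x,r^{-n})\le(\delta/r)^{n/q}A^{-1}(x,\delta^{-n})$ for $r>\delta$. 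This follows from $(Dec)_q$ and decays faster than $r^{-1}$ precisely because $q<n$; the lower bound you quote cannot give convergence.
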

\noindent{}\textbf{Proof:} See \cite[Corollary 6.3.4]{harjuleto2019}. \qed

And finally, an Orlicz version of the Rellich-Kondrachov theorem:
\begin{Prop}[Compact embedding]\label{CompactEmbedding}
    Let $A\in \Phi(\R^n)$ satisfying $(A0), (A1)$ and $(Dec)_q$. Then, $$W^{1,A}_0(\Omega)\xhookrightarrow{}\xhookrightarrow{}L^A(\Omega).$$ Moreover, if $q<n$ and $B\in \Phi(\R^n)$ verifies $t^{-\alpha}A^{-1}(x,t)\approx B^{-1}(x,t)$, for some $\alpha\in [0,1/n)$, then $$W^{1,A}_0(\Omega)\xhookrightarrow{}\xhookrightarrow{}L^B(\Omega).$$
\end{Prop}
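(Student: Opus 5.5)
The compact embedding $W^{1,A}_0(\Omega)\xhookrightarrow{}\xhookrightarrow{}L^A(\Omega)$ will be obtained by factoring through the classical Lebesgue setting. For the second claim, the target $L^B$ will be reached by interpolating between a compact embedding and the (non-compact) Sobolev embedding of Proposition~\ref{SobolevEmbedding}.

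\textbf{Step 1: the embedding into $L^A$.}
Let $(u_k)$ be bounded in $W^{1,A}_0(\Omega)$. We use the structure of $L^A$ under $(A0)$ and $(Dec)_q$.

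First, $L^A(\Omega)\xhookrightarrow{}L^1(\Omega)$ on bounded $\Omega$, since $A(x,t)\gtrsim t-c$ by convexity and $(A0)$. Hence $(u_k)$ is bounded in $W^{1,1}_0(\Omega)$. By the classical Rellich--Kondrachov theorem, after passing to a subsequence, $u_k\to u$ in $L^1(\Omega)$ and almost everywhere.

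Next we upgrade this to convergence in $L^A$. Because $A$ is doubling ($(Dec)_q$), norm convergence is equivalent to modular convergence $J_A(\lambda(u_k-u))\to0$ for every $\lambda>0$. We then show that the family $A(x,\lambda|u_k-u|)$ is uniformly integrable.

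For this we use the Sobolev--Poincaré gain. By Proposition~\ref{SobolevEmbedding}, applied locally or via Proposition~\ref{PoincareLocal} combined with the $(A1)$ condition, $W^{1,A}_0$ embeds into $L^{A_\varepsilon}$ for some $\Phi$-function $A_\varepsilon$ with $A_\varepsilon(x,t)/A(x,t)\to\infty$ uniformly as $t\to\infty$. Concretely, we take $A_\varepsilon^{-1}(x,t)=t^{-\varepsilon}A^{-1}(x,t)$ with a small $\varepsilon<1/n$.

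Uniform integrability then follows from de la Vallée-Poussin. Vitali's theorem, together with the a.e. convergence, gives $J_A(\lambda(u_k-u))\to0$.

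\textbf{Step 2: the embedding into $L^B$.}
We first note that $\alpha=0$ is exactly Step 1, so assume $\alpha\in(0,1/n)$. Let $B_1$ be the Sobolev conjugate with $B_1^{-1}\approx t^{-1/n}A^{-1}$. We write $B$ as an intermediate function between $A$ and $B_1$, namely
\[
B^{-1}\approx (A^{-1})^{1-\theta}(B_1^{-1})^{\theta},\qquad \theta=n\alpha\in(0,1).
\]
By a Calderón--Lozanovskii (or interpolation-type) Hölder inequality for generalized Orlicz spaces we then get
\[
\|v\|_{L^B}\lesssim \|v\|_{L^A}^{1-\theta}\|v\|_{L^{B_1}}^{\theta}.
\]

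Apply this to $v=u_k-u_j$:
\begin{itemize}
\item the factor $\|u_k-u_j\|_{L^{B_1}}$ stays bounded by Proposition~\ref{SobolevEmbedding};
\item the factor $\|u_k-u_j\|_{L^A}\to0$ by Step 1.
\end{itemize}
Hence $(u_k)$ is Cauchy in $L^B$.

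\textbf{Expected main obstacle.}
The delicate point is the pointwise Hölder-type inequality for the Luxemburg norms with $x$-dependent functions. We would verify it through the inverse functions, using the inequality $A^{-1}(x,s)^{1-\theta}B_1^{-1}(x,s)^{\theta}\gtrsim B^{-1}(x,s)$ together with the Young-type inequality $B(x,ab)\lesssim A(x,a)+B_1(x,b)$ applied to the appropriate splitting $|v|=|v|^{1-\theta}|v|^{\theta}$. Here $(A0)$ is what allows us to handle the small-$t$ range.

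A second technical issue is the use of Proposition~\ref{SobolevEmbedding}. It requires Lipschitz boundary and $(A2)$, which are not assumed here. We plan to avoid these hypotheses by extending by zero to a large ball and working in $W^{1,A}_0$ of that ball.
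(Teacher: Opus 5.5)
\textbf{Gap: the first claim when $q\ge n$.} Your Step~1 gets equi-integrability from an embedding $W^{1,A}_0(\Omega)\to L^{A_\varepsilon}(\Omega)$ with $A_\varepsilon^{-1}=t^{-\varepsilon}A^{-1}$. The only source you give for it is Proposition~\ref{SobolevEmbedding}, which needs $(Dec)_q$ with $q<n$. The first claim assumes only $(Dec)_q$ for some $q<\infty$. So for $q\ge n$ (already for $A(x,t)=t^q$ with $q\ge n$) Step~1 is unsupported, and so is the first claim.

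Your fallbacks do not close this. Applying Proposition~\ref{SobolevEmbedding} ``locally'' does not lower the growth exponent $q$. Proposition~\ref{PoincareLocal} only returns $L^A$, with no gain of integrability. Without a gain, de la Vall\'ee-Poussin cannot be invoked. (The paper itself gives no argument here; it just quotes Harjulehto--H\"ast\"o, Theorems 6.3.7--6.3.8.)

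\textbf{How to repair it.} You need either a gain valid for all $q$, or an argument that avoids one.

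For a gain when $n\ge 2$: choose $\theta\in(0,1)$ with $\frac1n-\frac1q<\theta(1-\frac1q)<\frac1n$ and set $\tilde A^{-1}(x,t):=t^{\theta}A^{-1}(x,t)^{1-\theta}$. Then:
\begin{itemize}
\item $\tilde A$ satisfies (A0), (A1), and $(Dec)_{\tilde q}$ with $\frac1{\tilde q}=\theta+\frac{1-\theta}{q}>\frac1n$, so $\tilde q<n$;
\item $L^A(\Omega)\to L^{\tilde A}(\Omega)$ on bounded sets;
\item since $A^{-1}(x,t)\gtrsim t^{1/q}$ for large $t$, the Sobolev conjugate satisfies $t^{-1/n}\tilde A^{-1}\lesssim t^{\theta(1-1/q)-1/n}A^{-1}$, a genuine power gain over $A$.
\end{itemize}
Proposition~\ref{SobolevEmbedding} applied to $\tilde A$ on a ball then gives what you need. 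For $n=1$, just use $W^{1,1}_0\to L^\infty$.

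To avoid a gain altogether, use mollification. Bound $|u-\eta_\delta*u|(x)\lesssim\int_{B(x,\delta)}|\nabla u(y)||x-y|^{1-n}\,dy\lesssim\delta\sum_k2^{-k}T_{k+k_0}|\nabla u|(x)$. The averaging operators are uniformly bounded, as in the proof of Theorem~\ref{thm:poincare_inequality}. This gives $\|u-\eta_\delta*u\|_{L^A}\lesssim\delta\|\nabla u\|_{L^A}$, and you finish with Arzel\`a--Ascoli for fixed $\delta$.

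\textbf{The rest is sound, with two small corrections.} Step~2 works once Step~1 is available, and there $q<n$ is assumed, so no gap arises. Reducing to a ball handles the Lipschitz hypothesis. It does not by itself remove (A2); what removes it is that (A2) holds automatically on bounded sets under (A0) (take $f$ constant).

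The H\"older-type bound $\|v\|_{L^B}\lesssim\|v\|_{L^A}^{1-\theta}\|v\|_{L^{B_1}}^{\theta}$ follows at once from Theorem~\ref{InterpolationOrlicz} and $\|v\|_{[E_0,E_1]_\theta}\le\|v\|_{E_0}^{1-\theta}\|v\|_{E_1}^{\theta}$. If you prove it by hand, the pointwise inequality you need is $B(x,c\,a^{1-\theta}b^{\theta})\le A(x,a)+B_1(x,b)$. The inequality $B(x,ab)\lesssim A(x,a)+B_1(x,b)$ you wrote would require $B^{-1}\gtrsim A^{-1}B_1^{-1}$, which fails here.
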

\noindent{}\textbf{Proof:} See \cite[Theorems 6.3.7, 6.3.8]{harjuleto2019}. \qed

To end this subsection, we recall the complex interpolation for generalized Orlicz spaces \cite[Theorem 5.1]{Uribe2018}, which will be a key result for our purposes:
\begin{Teor}\label{InterpolationOrlicz}
    Let $A_0,A_1\in \Phi(\Omega)$ satisfying $(A0)$. Then, $$[L^{A_0}(\Omega),L^{A_1}(\Omega)]_\theta=L^{A_\theta}(\Omega),$$ where $$A_\theta^{-1}(x,t)=(A_0^{-1}(x,t))^{1-\theta}(A_1^{-1}(x,t))^\theta,$$ for every $0<\theta<1$.
\end{Teor}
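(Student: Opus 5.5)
Since the paper quotes this result as \cite[Theorem 5.1]{Uribe2018}, a complete argument is not reproduced here; what follows is how I would prove it. The plan is a Calderón-type argument: use the factorization method to get $L^{A_\theta}\subseteq[L^{A_0},L^{A_1}]_\theta$, and use a three-lines/duality argument for the reverse inclusion. Throughout I would work with the modulars and with $(A0)$, which gives $A_j^{-1}(x,1)\approx 1$ uniformly in $x$, and I would replace each $A_j$ by an equivalent $\Phi$-function whose inverse is continuous and strictly increasing in $t$. This only changes norms up to constants, so it does not affect the identification. First I would check that $A_\theta$, defined through its inverse, is again (equivalent to) a generalized Young function satisfying $(A0)$. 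The geometric mean of two concave-type inverses is again of that type, so $A_\theta$ is equivalent to a convex function.

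\textbf{Step 1: $L^{A_\theta}\hookrightarrow[L^{A_0},L^{A_1}]_\theta$.} Take a simple function $f$ with $\|f\|_{L^{A_\theta}}<1$, so that $J_{A_\theta}(f)\le 1$, and set $g(x):=A_\theta(x,|f(x)|)$. For $z\in S$ define
$$F(z)(x):=\operatorname{sgn}f(x)\,\big(A_0^{-1}(x,g(x))\big)^{1-z}\big(A_1^{-1}(x,g(x))\big)^{z}.$$
By construction $F(\theta)=f$. On $\operatorname{Re}z=j$ we have $|F(j+it)|=A_j^{-1}(x,g)$, hence $J_{A_j}(F(j+it))\le\int g\le 1$. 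Multiplying by $e^{\epsilon(z-\theta)^2}$ gives the decay required in $\mathfrak F$ at the cost of a factor $e^{\epsilon}$. Holomorphy and continuity are clear because $f$ is simple, so $F$ takes values in a finite-dimensional space. Letting $\epsilon\to0$ and using density of simple functions in $L^{A_\theta}$ (which may require a doubling-type hypothesis, or else a closure argument with Fatou) gives the embedding.

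\textbf{Step 2: the reverse embedding.} I expect this to be the main obstacle. The standard route is duality. $L^{A_\theta}$ is a Köhler space with the Fatou property, so $\|f\|_{L^{A_\theta}}\approx\sup\{\int fh:\ J_{A_\theta^*}(h)\le1\}$. For $F\in\mathfrak F$ with $F(\theta)=f$ and a simple $h$, I would build the dual factorization $H(z)=\operatorname{sgn}h\,(A_0^{*})^{-1}(x,k)^{1-z}(A_1^{*})^{-1}(x,k)^{z}$, with $k=A_\theta^*(x,|h|)$. This uses the relation $A^{-1}(x,t)\,(A^*)^{-1}(x,t)\approx t$, which shows that $(A_\theta)^*$ is equivalent to the $\theta$-combination of $A_0^*$ and $A_1^*$. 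Applying the three lines lemma to $\Phi(z)=\int F(z)H(z)\,dx$ and using Hölder's inequality in $L^{A_j}\times L^{A_j^*}$ on the boundary lines gives $|\int fh|\lesssim\|F\|_{\mathfrak F}$. Taking the supremum over $h$ yields $\|f\|_{L^{A_\theta}}\lesssim\|f\|_\theta$.

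\textbf{Technical points.} Two things need care: the approximation of $F(z)$ needed to justify holomorphy of $\Phi$, which I would handle by truncating to simple functions, and the uniform constants coming from $(A0)$. An alternative, if duality is delicate without doubling, is to invoke Calderón's identification $[X_0,X_1]_\theta=X_0^{1-\theta}X_1^\theta$ for Banach lattices with the Fatou property. Then it only remains to show $(L^{A_0})^{1-\theta}(L^{A_1})^\theta=L^{A_\theta}$. The inclusion $\supseteq$ is exactly the factorization of Step 1. The inclusion $\subseteq$ follows from the pointwise inequality $A_\theta(x,a^{1-\theta}b^\theta)\le\max\{A_0(x,a),A_1(x,b)\}$, which is immediate from the definition of $A_\theta^{-1}$ and monotonicity.
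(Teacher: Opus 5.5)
The paper gives no proof of this statement; it only cites \cite[Theorem 5.1]{Uribe2018}. Your lattice-theoretic ingredients are sound in general. The factorization $|f|=A_0^{-1}(x,g)^{1-\theta}A_1^{-1}(x,g)^{\theta}$ and the pointwise bound $A_\theta(x,a^{1-\theta}b^\theta)\le\max\{A_0(x,a),A_1(x,b)\}$ identify the Calder\'on product $(L^{A_0})^{1-\theta}(L^{A_1})^\theta$ with $L^{A_\theta}$. Your Step 2, or Calder\'on's general inclusion $[X_0,X_1]_\theta\subseteq X_0^{1-\theta}X_1^\theta$, gives $[L^{A_0},L^{A_1}]_\theta\subseteq L^{A_\theta}$.

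The gap is in Step 1, in passing from simple functions to all of $L^{A_\theta}$. Simple functions, and likewise $L^{A_0}\cap L^{A_1}$, are dense in $L^{A_\theta}$ only if its norm is absolutely continuous. That is a doubling-type property, and (A0) does not provide it. Neither of your fall-backs repairs this:
\begin{itemize}
\item A Fatou-type limiting argument can only place $f$ in a space whose unit ball is closed under a.e.\ limits, such as the Calder\'on product or the upper method $[X_0,X_1]^\theta$. It cannot place $f$ in $[X_0,X_1]_\theta$, which is by construction the norm closure of $X_0\cap X_1$.
\item Likewise, the Fatou property is what identifies $X_0^{1-\theta}X_1^\theta$ with the upper method $[X_0,X_1]^\theta$. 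For the lower method, Calder\'on's theorem only says that $[X_0,X_1]_\theta$ is the closure of $X_0\cap X_1$ in $X_0^{1-\theta}X_1^\theta$. Equality holds when $X_0$ or $X_1$ has absolutely continuous norm.
\end{itemize}
A smaller point: for $x$-dependent $A_j$, your $F(z)$ is not finite-dimensional-valued even when $f$ is simple, because $A_j^{-1}(x,g(x))$ varies with $x$. Holomorphy and continuity on the boundary lines therefore need a genuine check, and the same applies to $H(z)$ in Step 2.

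This gap cannot be closed under (A0) alone, because the inclusion $L^{A_\theta}\subseteq[L^{A_0},L^{A_1}]_\theta$ is then false. Here is a counterexample.
\begin{itemize}
\item On $\Omega=(0,1)$ take $A_0(t)=e^t-1$ and $A_1(t)=e^{t^2}-1$. Both satisfy (A0) with $\beta=1/2$. Then $A_\theta(t)=e^{t^r}-1$ with $r=2/(2-\theta)\in(1,2)$, and $L^{A_0}\cap L^{A_1}=L^{A_1}$.
\item The function $f(x)=(\log\frac1x)^{1/r}$ lies in $L^{A_\theta}$, since $J_{A_\theta}(f/\lambda)=\int_0^1(x^{-\lambda^{-r}}-1)\,dx<\infty$ for $\lambda>1$.
\item For $g\in L^{A_1}$, Chebyshev's inequality gives $|\{|g|>\lambda/2\}|\lesssim e^{-c\lambda^2}=o(e^{-\lambda^r})$. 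On the other hand, $|\{\lambda<f\le\lambda+1\}|\ge(1-e^{-1})e^{-\lambda^r}$ for $\lambda\ge1$.
\item Hence $|f-g|\ge\lambda/2$ on a set of measure $\gtrsim e^{-\lambda^r}$ for every large $\lambda\in\N$. Therefore $\int_0^1\exp(|f-g|^r/\epsilon^r)\,dx\gtrsim\sum_\lambda e^{\lambda^r((2\epsilon)^{-r}-1)}=\infty$ for $\epsilon<1/2$.
\item But $\|f-g\|_{L^{A_\theta}}\le\epsilon$ would force this integral to be at most $2$. So $\operatorname{dist}_{L^{A_\theta}}(f,L^{A_1})\ge1/2$.
\item Since $L^{A_0}\cap L^{A_1}$ is dense in $[L^{A_0},L^{A_1}]_\theta$ by Theorem \ref{ComplexProps}(4), the two spaces cannot coincide with equivalent norms.
\end{itemize}

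The statement therefore needs an extra hypothesis, for instance that $A_0$ or $A_1$ satisfies $(Dec)_q$. Then $A_\theta$ satisfies $(Dec)_{q/(1-\theta)}$ and $L^{A_\theta}$ has absolutely continuous norm. Calder\'on's theorem, combined with your identification of the product, then yields the identity. This costs nothing in the paper, since every application of the result there involves $(Dec)$ functions.
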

As it would be expected, if we take $$A_0(t,x)=t^{p_0},\,A_1(x,t)=t^{p_1},\,1\leq p_0\leq q_0\leq \infty,$$ i.e., $L^{A_j}(\Omega)=L^{p_j}(\Omega)$ and $A_j^{-1}(x,t)=t^{1/{p_j}}, j=0,1,$ we have that $$[L^{p_0}(\Omega),L^{p_1}(\Omega)]_\theta=[L^{A_0}(\Omega),L^{A_1}(\Omega)]_\theta=L^{A_\theta}(\Omega),$$ where \begin{align*}
    A^{-1}_\theta(x,t)=t^{(1-\theta)/p_0+\theta/p_1},
\end{align*} and hence we recover the classical Riesz-Thorin Theorem.

\subsection{Calder\'on-Zymgund operators and multiplier theory}
A key tool on the theory of nonlocal gradients in its interplay with harmonic analysis is the H\"ormander-Mikhlin multiplier theorem \cite[Theorem 4.23]{abels2012}:
\begin{Teor}\label{Milhinnormal}
    Let $m:\R^n\setminus\{0\}\to \C$ a $(n+2)$-times continuously differentiable function such that $$|\partial^\alpha_zm(z)|\lesssim |z|^{-|\alpha|},$$ for all $z\not =0$ and $|\alpha|\leq n+2$. Then, for every $1<p<\infty$, the operator $T$ defined as $Tu=\mathfrak{F}^{-1}(m\widehat{u}),$ for all $u\in \mathcal{S}(\R^n)$, extends to a bounded linear operator $T:L^p(\R^n)\to L^p(\R^n)$.
\end{Teor}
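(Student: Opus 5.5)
We prove Theorem \ref{Milhinnormal} by the standard Calder\'on--Zygmund route: $L^2$ bound from Plancherel, kernel estimates from the symbol bounds, weak-$(1,1)$, then interpolation and duality.

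\textbf{Step 1 ($L^2$).} Taking $\alpha=0$ in the hypothesis gives $m\in L^\infty(\R^n)$. By Plancherel, $\norm{Tu}_{L^2}=\norm{m\widehat u}_{L^2}\leq \norm{m}_{L^\infty}\norm{u}_{L^2}$, so $T$ extends boundedly to $L^2(\R^n)$.

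\textbf{Step 2 (kernel estimates).} Choose a Littlewood--Paley partition of unity $\sum_{j\in\Z}\psi(2^{-j}\xi)=1$ on $\xi\neq0$, and set $m_j(\xi)=m(\xi)\psi(2^{-j}\xi)$ with kernels $K_j=\mathfrak{F}^{-1}m_j$.

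The symbol bounds give $|\partial^\alpha m_j|\lesssim 2^{-j|\alpha|}$ on a set of measure about $2^{jn}$. Integrating by parts $N\leq n+2$ times yields
\[
|K_j(x)|\lesssim 2^{jn}(1+2^j|x|)^{-N}, \qquad |\nabla K_j(x)|\lesssim 2^{j(n+1)}(1+2^j|x|)^{-N}.
\]

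We then sum over $j$, splitting into $2^j\leq |x|^{-1}$ and $2^j>|x|^{-1}$. This gives Hörmander's condition for $K=\sum_j K_j$ away from the origin:
\[
\int_{|x|>2|y|}|K(x-y)-K(x)|\,dx\leq C.
\]
We prove it from the pointwise bounds: for small $j$ use the gradient estimate, for large $j$ use the size estimate with $N=n+1$ decay. Only about $n+1$ derivatives are needed, so $n+2$ leaves a margin.

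\textbf{Step 3 (weak-$(1,1)$).} For $f\in L^1\cap L^2$ at height $\lambda$, take the Calder\'on--Zygmund decomposition $f=g+\sum_Q b_Q$.

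The good part $g$ is handled by the $L^2$ bound of Step 1 and Chebyshev's inequality. For the bad part, each $b_Q$ has mean zero, so the Hörmander condition controls $Tb_Q$ outside the dilated cube $2Q$. The union of the dilated cubes has measure $\lesssim\norm{f}_{L^1}/\lambda$. Together these give $|\{|Tf|>\lambda\}|\lesssim \norm{f}_{L^1}/\lambda$.

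\textbf{Step 4 (all $1<p<\infty$).} Marcinkiewicz interpolation between weak-$(1,1)$ and $L^2$ gives boundedness for $1<p\leq2$. For $2<p<\infty$, use duality: the adjoint of $T$ has symbol $\overline{m}$, which satisfies the same derivative bounds, so it is bounded on $L^{p'}$ with $p'<2$.

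Finally, $\mathcal S$ is dense in $L^p$, so $T$ extends uniquely to a bounded operator $T:L^p(\R^n)\to L^p(\R^n)$.

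\textbf{Main obstacle.} The delicate step is Step 2: the summation of the dyadic kernel pieces. One must track the derivative count so that the small-$j$ and large-$j$ sums both converge and yield a uniform Hörmander constant. Steps 3 and 4 are standard once this is in place.
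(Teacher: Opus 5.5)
Your proposal is correct and follows the same Calder\'on--Zygmund route the paper relies on: the paper does not prove the theorem itself but cites Abels' Theorem 4.23, whose argument (Plancherel for $L^2$, kernel estimates for $\check m$ as recorded in Lemma~\ref{lema1}, weak-$(1,1)$ via the Calder\'on--Zygmund decomposition, Marcinkiewicz interpolation and duality) matches yours. The differences are minor: you verify H\"ormander's integral condition directly from the dyadic pieces, which needs only $n+1$ derivatives, instead of pointwise $C^1$ kernel bounds; you should also make explicit that $Tb_Q(x)=\int K(x-y)\,b_Q(y)\,dy$ for $x$ away from $Q$ (a limit of the truncated sums $\sum_{|j|\le N}K_j$), and use the enlarged cube $2\sqrt{n}\,Q$ rather than $2Q$.
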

\begin{Obses}
\end{Obses}
\begin{enumerate}
    \item Theorem \ref{Milhinnormal} also holds under the weaker assumption that $|\partial^\alpha_zm(z)|\lesssim |z|^{-|\alpha|}$, for all $|\alpha|\leq \left \lfloor{n/2}\right \rfloor+1$.
    \item Every $(n+2)$-times continuously differentiable function that is homogeneous of degree $0$, i.e., $m(\lambda z)=m(z)$ for all $\lambda>0$, $z\not =0$, satisfies the hypothesis of Theorem \ref{Milhinnormal}.
\end{enumerate}    
On the context of Orlicz spaces, it has been established a H\"ormander-Mihlin type theorem in \cite{bonino2024}.In order to prove results in the generalized setting using multiplier theory, we will follow the path of \cite{campos2024}, which is based on Calder\'on-Zymgund operators.

We recall the definition of C-Z operators:
We say that a linear operator $T$ is a \textit{Calder\'on-Zygmund operator} if there exists a kernel $K:\R^n\times \R^n\setminus\{(x,x):x\in \R^n\}\to \R$, such that for all $\varphi\in C_c^\infty(\R^n)$ and $x\not \in \operatorname{supp}f$, $$T\varphi(x)=\int_{\R^n}K(x,y)\varphi(y)\,dy,$$ with the kernel satisfying that $$|K(x,y)|\lesssim|x-y|^{-n},$$ and that for some $\varepsilon>0$, $$|K(x+h,y)-K(x,y)|+|K(x,y)-K(x,y+h)|\lesssim \frac{|h|^\varepsilon}{|x-y|^{n+\varepsilon}},$$ for every $h\in \R^n$ such that $2|h|\leq |x-y|$.

Those operators are connected with multiplier theory on Orlicz spaces by the two following lemmas:
\begin{Lema}\label{lema1}
    Let $m$ in the hypothesis of Theorem \ref{Milhinnormal}. Then, there exists $k\in C^1(\R^n\setminus\{0\})$ such that $$|\partial ^\alpha_zk(z)|\leq C|z|^{-n-|\alpha|},\,z\not =0,\,|\alpha|\leq 1,$$ and the operator $Tf:=\check{m}*f,$ $f\in\mathcal{S}(\R^n)$, satisfies that $Tf=k*f$, for all $x\not\in\operatorname{supp}f$.
\end{Lema}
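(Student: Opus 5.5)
The plan is the standard Littlewood–Paley argument that underlies the proof of the Mikhlin theorem: find the kernel of $T$ away from the origin as a distributional inverse Fourier transform, and get its pointwise bounds from a dyadic decomposition of $m$.

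Fix a smooth radial $\psi$ supported in the annulus $\{1/2\le|\xi|\le 2\}$ with $\sum_{j\in\Z}\psi(2^{-j}\xi)=1$ for $\xi\neq0$. Set $m_j(\xi):=m(\xi)\psi(2^{-j}\xi)$ and $k_j:=\mathfrak{F}^{-1}(m_j)$. Each $m_j$ is $C^{n+2}$ with compact support, so $k_j$ is a bounded smooth function. Define
$$k:=\sum_{j\in\Z}k_j \quad\text{on }\R^n\setminus\{0\}.$$

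First I estimate each $k_j$. Rescaling gives $k_j(z)=2^{jn}\,\mathfrak{F}^{-1}\bigl(m(2^j\cdot)\psi\bigr)(2^jz)$. The hypothesis $|\partial^\alpha m(\xi)|\lesssim|\xi|^{-|\alpha|}$ is invariant under dilations, so the functions $\mu_j:=m(2^j\cdot)\psi$ are bounded in $C^{n+2}$ uniformly in $j$, with supports in the fixed annulus. Integrating by parts $N$ times, for $N\le n+2$, yields
$$|\partial^\beta\mathfrak{F}^{-1}\mu_j(w)|\lesssim(1+|w|)^{-N},\qquad |\beta|\le1,$$
with constants independent of $j$. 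Hence
$$|\partial^\beta k_j(z)|\lesssim 2^{j(n+|\beta|)}\,(1+2^j|z|)^{-N}.$$
Choose $N=n+2>n+1\ge n+|\beta|$. Split the sum at $2^j\approx|z|^{-1}$:
\begin{itemize}
\item for $2^j\le|z|^{-1}$, use the bound $2^{j(n+|\beta|)}$, a geometric series summing to $\lesssim|z|^{-n-|\beta|}$;
\item for $2^j>|z|^{-1}$, use the bound $2^{j(n+|\beta|-N)}|z|^{-N}$, again geometric and $\lesssim|z|^{-n-|\beta|}$.
\end{itemize}
So the series for $k$ and for $\partial^\beta k$, $|\beta|\le1$, converge locally uniformly on $\R^n\setminus\{0\}$. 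This gives $k\in C^1(\R^n\setminus\{0\})$ with the stated bounds.

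Next I identify $Tf$ with $k*f$ off $\operatorname{supp}f$. Since $m$ is bounded, $\sum_j m_j\to m$ boundedly a.e. By dominated convergence in Fourier space, the partial sums $\sum_{|j|\le J}k_j*f$ converge to $Tf=\mathfrak{F}^{-1}(m\widehat f)$ in $L^2$, and hence a.e. along a subsequence. For $x\notin\operatorname{supp}f$ we have $|x-y|\ge\delta>0$ on the support of $f$. By the uniform bound $\sum_j|k_j(x-y)|\lesssim|x-y|^{-n}\le\delta^{-n}$, dominated convergence gives $\sum_{|j|\le J}k_j*f(x)\to\int k(x-y)f(y)\,dy$. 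The two limits agree a.e. outside $\operatorname{supp}f$. Both sides are continuous there (the right-hand side by the locally uniform bounds), so they agree pointwise.

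The main obstacle is keeping the integration-by-parts estimate uniform in $j$ using only $n+2$ derivatives. One needs the exponent $N$ to exceed $n+1$ so that the high-frequency tail of $\nabla k$ sums. This is exactly why $n+2$ derivatives are required, and the dilation-invariant form of the bound on $m$ is what makes the $C^{n+2}$ bounds on $\mu_j$ independent of $j$.
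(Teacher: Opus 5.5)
Your argument is correct and complete: the dyadic splitting of $m$, the $j$-uniform bound $|\partial^\beta k_j(z)|\lesssim 2^{j(n+|\beta|)}(1+2^j|z|)^{-(n+2)}$, the summation split at $2^j\approx|z|^{-1}$, and the identification of $Tf(x)$ with $\int k(x-y)f(y)\,dy$ off $\operatorname{supp}f$ all go through as you wrote them. The paper gives no proof of its own and only cites \cite[Proposition 4.27]{abels2012}; your dyadic (Littlewood--Paley) argument with $n+2$ integrations by parts is the standard proof of that kind of kernel estimate, so there is nothing to compare or repair.
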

\begin{Lema}\label{lema2}
    Let $A\in \Phi(\R^n)$ satisfying $(A0), (A1), (A2), (Inc)$ and $(Dec)$, and $T$ a Calder\'on-Zygmund singular integral operator. Then, $T:L^A(\R^n)\to L^A(\R^n)$, continuously.
\end{Lema}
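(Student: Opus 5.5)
The plan is to reduce the statement to the extrapolation theory of Calder\'on--Zygmund operators on generalized Orlicz spaces, which we invoke as established in the literature; in particular \cite{Uribe2018} and \cite[Chapter 5]{harjuleto2019} develop the boundedness of the Hardy--Littlewood maximal operator under $(A0)$--$(A2)$, and the paper itself follows the path of \cite{campos2024}. The first step is to record the weighted input. A Calder\'on--Zygmund operator $T$, with the size and H\"older regularity estimates on its kernel recalled above, is bounded on $L^p_w(\R^n)$ for every $1<p<\infty$ and every Muckenhoupt weight $w\in A_p$, with operator norm controlled by a function of the $A_p$ characteristic $[w]_{A_p}$. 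This follows by the classical good-$\lambda$ or sparse domination argument, using the unweighted $L^2$-boundedness of $T$ implicit in the definition.

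The second step is to pass from weighted Lebesgue spaces to $L^A$ by Rubio de Francia extrapolation in the form valid for Banach function spaces. That theorem says the following. Suppose $X$ is a Banach function space on $\R^n$ such that the maximal operator $M$ is bounded on $X$ and on its K\"othe dual $X'$. Then any pair $(|Tf|,|f|)$ satisfying weighted $L^{p_0}_w$ estimates for all $w\in A_{p_0}$ also satisfies $\|Tf\|_X\lesssim\|f\|_X$. So we need two facts:
\begin{itemize}
\item[(i)] $L^A(\R^n)$ is a Banach function space with K\"othe dual $L^{A^*}(\R^n)$;
\item[(ii)] $M$ is bounded on both $L^A$ and $L^{A^*}$.
\end{itemize}
Fact (i) holds under $(A0)$ together with doubling, that is, $(Dec)$, as recalled in the preliminaries via \cite[Theorem 2.7.14]{Diening}. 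For (ii), the boundedness of $M$ on $L^A$ follows from $(A0)$, $(A1)$, $(A2)$ and $(aInc)_p$ for some $p>1$, by \cite[Theorem 4.3.4]{harjuleto2019}. For $L^{A^*}$ we check that $A^*$ inherits $(A0)$, $(A1)$ and $(A2)$, which is a known duality lemma in \cite{harjuleto2019}. We also check that $A^*$ satisfies $(Inc)_{q'}$ because $A$ is $(Dec)_q$. The maximal theorem then applies to $A^*$ as well.

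The final step is to apply extrapolation with $p_0=2$, first for $f\in C_c^\infty(\R^n)$, where $Tf$ is well defined. We then extend to all of $L^A$ by density of $C_c^\infty$ in $L^A$, which holds by \cite[Theorem 4.5]{harjuleto2016} since $A$ is doubling and $(A0)$. The extension is consistent with the original $T$ on $L^p\cap L^A$ because $L^A\hookrightarrow L^p+L^q$, and $T$ is already bounded on $L^p$ and on $L^q$.

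The main obstacle is step (ii) for the conjugate function: verifying that $(A1)$ and $(A2)$ transfer to $A^*$. This needs the relation $A^{-1}(x,t)\,(A^*)^{-1}(x,t)\approx t$, which holds up to constants uniform in $x$. With it, the inequalities defining $(A1)$ and $(A2)$ for $A^*$ follow by inverting those for $A$. In the $(A2)$ case there is an extra shift $t\mapsto t+f(x)+f(y)$, and the constant must be adjusted using the boundedness of $f$ together with $(A0)$. If that transfer turns out delicate, the fallback is to cite directly the extrapolation result for generalized Orlicz spaces in \cite{Uribe2018}, which is precisely stated under $(A0)$--$(A2)$, $(Inc)$ and $(Dec)$.
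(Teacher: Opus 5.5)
Your proposal is correct and is essentially the argument behind the paper's proof, which simply cites \cite[Corollary 5.4.3]{harjuleto2019}: that result rests on the same scheme. Weighted $L^{p_0}_w$ bounds for Calder\'on--Zygmund operators are combined with Rubio de Francia extrapolation in $L^A$, which relies on boundedness of $M$ on $L^A$ and $L^{A^*}$, and hence on the transfer of $(A0)$--$(A2)$ and $(Inc)$ to $A^*$ that you single out. You are also right to build the unweighted $L^2$-boundedness of $T$ into the definition, since the paper's definition of a Calder\'on--Zygmund operator omits it and the lemma fails without it.
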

A proof of Lemma \ref{lema1} can be found in \cite[Proposition 4.27]{abels2012}, while the proof for Lemma \ref{lema2} is found in \cite[Corollary 5.4.3]{harjuleto2019}. It is straightoforward to see that the operator defined by convolution with multipliers on the hypothesis of Theorem \ref{Milhinnormal} are Calder\'on-Zygmund singular integral operators, as it is proven in \cite[Lemma 3]{campos2024}. In fact, if $m$ is in the hypothesis of Theorem \ref{Milhinnormal}, by Lemma \ref{lema1}, $$Tf(x):=\check{m}*f(x)=\int_{\R^n}k(x-y)f(y)\,dy,\,x\not\in \operatorname{supp}f,$$ with $$|\partial ^\alpha_zk(z)|\leq C|z|^{-n-|\alpha|},\,z\not=0,\,|\alpha|\leq 1.$$ If we define the function $K(x,y):=k(x-y)$, taking $\alpha=0$, yields $$|K(x,y)|=|k(x-y)|\leq C|x-y|^{-n}.$$ Now, take $|h|\leq \frac{|x-y|}{2}$. Since in that case $|x-y-th|\geq \frac{|x-y|}{2},$ we have that taking $\alpha=1$,
\begin{align*}
    &|k(x-y)-k(x-y-h)|+|k(x-y)-k(x-y+h)|\\
    &\leq \operatorname{sup}_{t\in [0,1]}|Dk(x-y-th)||h|+\operatorname{sup}_{t\in [0,1]}|Dk(x-y+th)||h|\leq C|x-y|^{-1-n}|h|.
\end{align*}
We have established the following sort of H\"ormander-Mihlin multiplier theorem for generalized Orlicz spaces.
\begin{Teor}\label{Orliczmultiplier}
    Let $m\in C^n(\R^n\setminus\{0\})$ such that $$|\partial^\alpha_\xi m(\xi)|\leq C|\xi|^{-|\alpha|},\,\xi\in \R^n,\,|\alpha|=0,1,\ldots,n,$$ for some positive constant $C$. Then, for any $A\in \Phi(\R^n)$ satisfying $(A0), (A1), (A2), (Inc),\\
    (Dec)$, the operator $$T\varphi:=\check{m}*\varphi,\,\varphi\in \mathcal{S}(\R^n),$$ extends uniquely to a bounded linear operator in $L^A(\R^n)$.
\end{Teor}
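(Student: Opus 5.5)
The plan is to chain Lemma \ref{lema1} and Lemma \ref{lema2}, together with density of Schwartz functions in $L^A(\R^n)$, to get the extension.

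\textbf{Step 1: $T$ is a Calder\'on-Zygmund operator.} By Theorem \ref{Milhinnormal} and its first remark, $T$ extends to a bounded operator on $L^2(\R^n)$. The hypothesis only gives $|\alpha|\leq n$ derivatives instead of $n+2$, but this suffices for the $L^2$ statement since $m$ is bounded, and Plancherel gives $L^2$-boundedness directly. Next, following Lemma \ref{lema1}, I will produce a kernel $k\in C^1(\R^n\setminus\{0\})$ with $|\partial^\alpha k(z)|\leq C|z|^{-n-|\alpha|}$ for $|\alpha|\leq 1$, such that $T\varphi(x)=\int k(x-y)\varphi(y)\,dy$ for $x\notin\operatorname{supp}\varphi$.

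\textbf{Step 2: Orlicz boundedness and extension.} By the computation displayed just before the statement, $K(x,y):=k(x-y)$ satisfies the size condition and the H\"older-smoothness condition with $\varepsilon=1$. Hence $T$ is a Calder\'on-Zygmund singular integral operator. Lemma \ref{lema2} then gives $\|T\varphi\|_{L^A}\leq C\|\varphi\|_{L^A}$ for $\varphi\in\mathcal{S}(\R^n)$. Since $A$ satisfies $(A0)$ and is doubling (it is $(Dec)$), $C_c^\infty(\R^n)$, and hence $\mathcal S(\R^n)$, is dense in $L^A(\R^n)$ by \cite[Theorem 4.5]{harjuleto2016}. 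The bounded operator therefore extends uniquely by continuity to all of $L^A(\R^n)$.

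\textbf{Main obstacle.} The delicate point is the regularity count in Step 1. Lemma \ref{lema1} is stated under $n+2$ derivatives, whereas here only $n$ are assumed. The kernel estimates for $|\alpha|\leq 1$ require controlling $\partial^\beta(\xi^\gamma m)$ roughly up to order $n+1$ when one proceeds by dyadic Littlewood-Paley decomposition $m=\sum_j m\psi(2^{-j}\cdot)$.

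My first attempt will be the standard argument. Write $k=\sum_j k_j$, where $k_j=\mathfrak F^{-1}(m\psi(2^{-j}\cdot))$. Then estimate $|k_j(z)|$ and $|\nabla k_j(z)|$ by integrating by parts $N$ times: take $N=0$ when $2^j|z|\leq1$ and $N=n$ otherwise. This gives
\[
|k_j(z)|\lesssim 2^{jn}\min\{1,(2^j|z|)^{-n}\}
\quad\text{and}\quad
|\nabla k_j(z)|\lesssim 2^{j(n+1)}\min\{1,(2^j|z|)^{-n}\}.
\]
Summation over $j$ then yields logarithmic borderline terms. To remove them, I will use an $L^2$ (Plancherel) version with $n$ derivatives, as in H\"ormander's condition, and the Cauchy-Schwarz inequality on $\{|z|\sim 2^{-l}\}$. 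This yields the H\"ormander integral condition
\[
\int_{|x|>2|y|}|k(x-y)-k(x)|\,dx\leq C,
\]
which I expect to be what \cite[Corollary 5.4.3]{harjuleto2019} actually needs.

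If that route fails, I will simply invoke \cite[Lemma 3]{campos2024}, which the paper cites for exactly this reduction.
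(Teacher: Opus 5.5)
Your Steps 1--2 are exactly the paper's argument: Lemma \ref{lema1}, the mean-value check of the Calder\'on--Zygmund conditions with $\varepsilon=1$, Lemma \ref{lema2}, and density. The obstacle you single out is real, and the paper does not address it. Its derivation starts from ``if $m$ is in the hypothesis of Theorem \ref{Milhinnormal}, by Lemma \ref{lema1}'', and it invokes \cite[Lemma 3]{campos2024} only for such multipliers. So the paper actually proves the statement under $n+2$ derivatives, and falling back on that citation closes nothing.

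With exactly $n$ derivatives, your dyadic estimates give divergent sums, not logarithmic losses. For $2^j|z|>1$ your bounds read $|k_j(z)|\lesssim |z|^{-n}$ and $|\nabla k_j(z)|\lesssim 2^j|z|^{-n}$, and neither is summable in $j$. With $n+1$ derivatives the same computation gives $|k(z)|\lesssim |z|^{-n}$ and $|k(z+h)-k(z)|\lesssim |h|\,|z|^{-n-1}\bigl(1+\log(|z|/|h|)\bigr)$ for $2|h|\le |z|$. That is a H\"older kernel for every $\varepsilon<1$, which already suffices for Lemma \ref{lema2}.

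The genuine gap is your proposed repair. Lemma \ref{lema2}, i.e.\ \cite[Corollary 5.4.3]{harjuleto2019}, is stated for the pointwise size and H\"older conditions of the paper's definition. It is proved by Rubio de Francia extrapolation from bounds on $L^p(w)$ for Muckenhoupt weights $w\in A_p$, and that weighted input is exactly where pointwise kernel regularity is used. For operators whose kernels satisfy only H\"ormander's integral condition, such weighted bounds are not available in general: Martell, P\'erez and Trujillo-Gonz\'alez (2005) construct such operators lacking the natural weighted estimates.

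Nor can the unweighted $L^p$ bounds that the integral condition does give be transferred to $L^A$. An $x$-dependent $L^A$ is in general not an interpolation space for a couple of Lebesgue spaces: translations are bounded on every $L^p$ but not on $L^{p(\cdot)}$ with nonconstant $p$. So the H\"ormander condition is not ``what Corollary 5.4.3 actually needs''.

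If you want to keep exactly $n$ derivatives, bypass Lemma \ref{lema2}. Kurtz and Wheeden (1979) showed that a Mikhlin--H\"ormander condition of order $n$ already gives boundedness of $T$ on $L^p(w)$ for all $1<p<\infty$ and all $w\in A_p$. The extrapolation theorem of \cite[Chapter 5]{harjuleto2019} then yields boundedness on $L^A$. Otherwise, simply assume the hypotheses of Theorem \ref{Milhinnormal}. This is harmless for the paper, since every multiplier used later is smooth on $\R^n\setminus\{0\}$ with Mikhlin bounds of all orders; in Lemma \ref{PedroLema} the growth merely becomes $\langle t\rangle^{n+2}$.
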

It is known that UMD property of a Banach space $E$ implies as well multiplier theorems in the sense of \ref{Milhinnormal} for the space $E$, see \cite[Theorem 5.5.10]{Hytonen} and \cite[Theorem 8.3.9]{Hytonen2}.
\subsection{Nonlocal gradients}
In this subsection, we present the main definitions and properties of the fractional operators introduced by Shieh and Spector in \cite{ShiehSpector2015,ShiehSpector2018}, that give rise to our new framework. Let $u\in C_c^\infty(\R^n)$, $v\in C_c^\infty(\R^n;\R^n)$ and $s\in (0,1)$. We define the \textit{Riesz fractional gradient} of $u$ as $\nabla^s$ as $$\nabla^su(x)=c_{n,s}\int_{\R^n}\frac{u(x)-u(y)}{|x-y|^{n+s}}\frac{x-y}{|x-y}\,dy,\,x\in \R^n,$$ and the \textit{fractional divergence} as $$\operatorname{div}^sv(x)=c_{n,s}\int_{\R^n}\frac{v(x)-v(y)}{|x-y|^{n+s}}\cdot\frac{x-y}{|x-y}\,dy,\,x\in \R^n,$$ where $$c_{n,s}=\frac{\Gamma\left(\frac{n+s+1}{2}\right)}{\pi^{n/2}2^{-s}\Gamma\left(\frac{1-s}{2}\right)},$$ is a normalizing constant. One of the main properties of these objects is that they can be seen as the classical gradient $D$ and divergence of the Riesz potential $$I_su:=I_{s}*u(x),$$ where $$I_s(x):=\frac{1}{\gamma_{n,s}}\frac{1}{|x|^{n-s}},\,0<s<n.$$  of the functions $u$ and $v$. Here, the constant $\gamma_{n,s}:=\frac{\pi^{n/2}2^s\Gamma(s/2)}{\Gamma\left(\frac{n-s}{2}\right)}$, satisfies that $$\gamma_{n,1-s}c_{n,s}=(n+1-s).$$ In particular we have the following result: \begin{Prop}Let $s\in (0,1)$, $u\in C_c^\infty(\R^n)$ and $v\in C_c^\infty(\R^n;\R^n).$ Then, \begin{align*}
    \nabla^su&=D(I_{1-s}u)=I_{1-s}(Du),\\
    \operatorname{div}^sv&=\operatorname{div}(I_{1-s}v)=I_{1-s}(\operatorname{div}v).
\end{align*}
\end{Prop}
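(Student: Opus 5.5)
The plan is to work first at the level of the kernels and then to move everything to the Fourier side, where all the operators involved become multiplication by explicit symbols. Fix $u\in C_c^\infty(\R^n)$ and $v\in C_c^\infty(\R^n;\R^n)$. Because $s\in(0,1)$, we have $0<1-s<n$, so the kernel $I_{1-s}(x)=\gamma_{n,1-s}^{-1}|x|^{-(n-1+s)}$ is locally integrable. It is also bounded by $C|x|^{-(n-1+s)}$ at infinity. Hence $I_{1-s}u=I_{1-s}*u$ is a $C^\infty$ function, and differentiation under the integral sign (equivalently, passing the derivative onto the smooth compactly supported factor) gives
\[
D(I_{1-s}*u)=I_{1-s}*Du,\qquad \operatorname{div}(I_{1-s}*v)=I_{1-s}*\operatorname{div}v.
\]
Justifying this is routine. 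For $x$ in a compact set, the difference quotients of $u(x-y)$ in $x$ are dominated uniformly by $\|Du\|_\infty\chi_K(y)$ for a suitable compact $K$ depending on the support of $u$. Dominated convergence then applies against the locally integrable weight $|y|^{-(n-1+s)}$. This settles the second equality in each line.

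It remains to identify $I_{1-s}*Du$ with the singular integral defining $\nabla^s u$. I will do this directly in physical space. Write
\[
I_{1-s}*\partial_j u(x)=\gamma_{n,1-s}^{-1}\lim_{\varepsilon\to0}\int_{|x-y|>\varepsilon}\frac{\partial_j u(y)}{|x-y|^{n-1+s}}\,dy .
\]
On each truncated region I integrate by parts in $y$, and I place $u(y)-u(x)$ rather than $u(y)$ inside. This is legitimate because $\partial_j$ of the constant $u(x)$ vanishes, and the integral of $\partial_{y_j}$ of the kernel over the far region converges. The derivative is
\[
\partial_{y_j}|x-y|^{-(n-1+s)}=(n-1+s)\,\frac{x_j-y_j}{|x-y|^{n+s+1}},
\]
so the volume term becomes
\[
\gamma_{n,1-s}^{-1}(n-1+s)\int_{|x-y|>\varepsilon}\frac{u(x)-u(y)}{|x-y|^{n+s}}\frac{x_j-y_j}{|x-y|}\,dy .
\]
This is where the normalization is used: the stated identity $\gamma_{n,1-s}c_{n,s}=n+1-s$ should be read as matching $c_{n,s}$ with $\gamma_{n,1-s}^{-1}(n-1+s)$. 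I will double-check that constant, since the paper's displayed relation looks like a typo for $n-1+s$. The boundary term on the sphere $|x-y|=\varepsilon$ is bounded by $\varepsilon^{-(n-1+s)}\sup_{|y-x|=\varepsilon}|u(y)-u(x)|\,\varepsilon^{n-1}\lesssim\varepsilon^{1-s}\to0$. The boundary terms at infinity vanish by compact support. Letting $\varepsilon\to0$ is allowed because the integrand is $O(|x-y|^{1-n-s})$ near $x$, hence absolutely integrable. This gives $\nabla^s u=I_{1-s}*Du$. The divergence statement follows by the same computation applied componentwise to $v_j$ and then summed.

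As a cross-check, and as an alternative route if the constants become messy, I will compare Fourier symbols. With the normalization above, $\widehat{I_{1-s}}(\xi)=(2\pi|\xi|)^{-(1-s)}$. The principal-value kernel of $\nabla^s$ has symbol $2\pi i\xi(2\pi|\xi|)^{s-1}$, and this can be computed through the Riesz transform symbol $-i\xi/|\xi|$ together with the fractional Laplacian. Both sides therefore have the same symbol, and they coincide as tempered distributions. The main obstacle is purely bookkeeping, namely making the constants $c_{n,s}$ and $\gamma_{n,1-s}$ consistent with the Fourier convention. The analytic steps, namely the vanishing boundary term and the local integrability, are straightforward since $s<1$.
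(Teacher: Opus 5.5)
The paper gives no argument of its own for this proposition; it only cites \cite[Proposition 2.7]{GarciaSaez2026}. Your proposal is a correct, self-contained real-variable proof. Differentiating under the integral sign gives $D(I_{1-s}u)=I_{1-s}(Du)$ and $\operatorname{div}(I_{1-s}v)=I_{1-s}(\operatorname{div}v)$. Integrating by parts on $\{\varepsilon<|x-y|<R\}$ then identifies $I_{1-s}(\partial_j u)$ with the $j$-th component of $\nabla^s u$, and the divergence identity follows by summing over components. Compared with the Fourier route you keep as a cross-check, this argument does not need the symbol of $\nabla^s$, which is itself the nontrivial computation behind Proposition \ref{DsProps}(1). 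It also produces the normalising constant explicitly.

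Two small points need tidying.

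First, once you replace $u(y)$ by $u(y)-u(x)$, the integrand is no longer compactly supported. So the boundary term on $|x-y|=R$ does not vanish ``by compact support''. It does vanish, because it is bounded by $|u(x)|R^{-(n-1+s)}\,|\mathbb{S}^{n-1}|R^{n-1}=O(R^{-s})$. Alternatively, integrate by parts with $u$ itself, which is compactly supported. Then note that $\int_{|x-y|>\varepsilon}\partial_{y_j}|x-y|^{-(n-1+s)}\,dy$ and $\int_{|x-y|=\varepsilon}|x-y|^{-(n-1+s)}\nu_j\,dS$ both vanish by oddness. That is precisely what justifies subtracting $u(x)$.

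Second, the constant check you postponed closes in one line:
\[
\gamma_{n,1-s}\,c_{n,s}=\frac{\pi^{n/2}2^{1-s}\Gamma\left(\frac{1-s}{2}\right)}{\Gamma\left(\frac{n-1+s}{2}\right)}\cdot\frac{2^{s}\,\Gamma\left(\frac{n-1+s}{2}+1\right)}{\pi^{n/2}\,\Gamma\left(\frac{1-s}{2}\right)}=n-1+s .
\]
So the paper's $n+1-s$ is indeed a typo, and $c_{n,s}=(n-1+s)/\gamma_{n,1-s}$ is exactly the matching your integration by parts requires.
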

\noindent{}\textbf{Proof:} See \cite[Proposition 2.7]{GarciaSaez2026}. \qed

The following proposition collect some of the most important facts about those operators.
\begin{Prop}\label{DsProps}
    Let $s\in (0,1)$, $u\in C_c^\infty(\R^n)$ and $v\in C_c^\infty(\R^n;\R^n)$.
    \begin{itemize}
    \item[(1)] The Fourier transform of $\nabla^su$ is $$\widehat{\nabla^su}(\xi)=\frac{2\pi i\xi}{|2\pi\xi|^{1-s}}\widehat{u}(\xi),$$ and the Fourier transform of $\operatorname{div}^su$ is $$\widehat{\operatorname{div}^su}(\xi)=\frac{2\pi i\xi}{|2\pi\xi|^{1-s}}\cdot \widehat{v}(\xi).$$
        \item[(2)] The following fractional integration by parts formula holds: $$\int_{\R^n}\nabla^su(x)\cdot v(x)\,dx=-\int_{\R^n}u(x)\operatorname{div}^sv(x)\,dx.$$
        \item[(3)] We have the following fractional theorem of calculus: $$u(x)=c_{n,-s}\int_{\R^n}\nabla^su(y)\cdot \frac{x-y}{|x-y|^{n-s+1}}\,dy=I_s(\mathcal{R}\cdot \nabla^su),\,x\in\R^n,$$ where $\mathcal{R}$ is the Riesz transform, defined as a Fourier multiplier with symbol $-i\frac{\xi}{|\xi|}$.
        \end{itemize}
\end{Prop}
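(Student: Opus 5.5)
Since every function in the statement is a Schwartz-class (indeed $C_c^\infty$) function, all three items can be checked on the Fourier side, using the two representations $\nabla^s=D I_{1-s}=I_{1-s}D$ and $\operatorname{div}^s=\operatorname{div} I_{1-s}=I_{1-s}\operatorname{div}$ from the preceding proposition. The one outside fact needed is the standard Riesz potential transform with our normalisation,
\[
\widehat{I_\alpha f}(\xi)=|2\pi\xi|^{-\alpha}\widehat f(\xi),\qquad 0<\alpha<n,
\]
for $f\in\mathcal S(\R^n)$, which follows from the Gaussian subordination formula and the choice of $\gamma_{n,\alpha}$.

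\emph{Item (1).} Take $\alpha=1-s\in(0,1)\subset(0,n)$. Since $Du\in C_c^\infty$ and $\widehat{\partial_j u}(\xi)=2\pi i\xi_j\widehat u(\xi)$, we get
\[
\widehat{\nabla^s u}(\xi)=\widehat{I_{1-s}(Du)}(\xi)=|2\pi\xi|^{-(1-s)}\,2\pi i\xi\,\widehat u(\xi).
\]
The divergence formula follows in the same way, applied componentwise to $v$ and then summed.

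\emph{Item (2).} The symbol $\frac{2\pi i\xi}{|2\pi\xi|^{1-s}}$ is locally integrable and of polynomial growth. Hence $\nabla^s u$ and $\operatorname{div}^s v$ are smooth functions in $L^2$, since their transforms are in $L^2$. Parseval then gives
\[
\int_{\R^n}\nabla^s u\cdot v=\int_{\R^n}\frac{2\pi i\xi}{|2\pi\xi|^{1-s}}\widehat u(\xi)\cdot\overline{\widehat v(\xi)}\,d\xi .
\]
Because the symbol is odd and purely imaginary, moving it onto $\overline{\widehat v}$ produces the factor $-1$, and the right-hand side equals $-\int u\,\operatorname{div}^s v$. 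Alternatively, and more simply, one can integrate by parts classically in $\int D(I_{1-s}u)\cdot v$ and use the symmetry of convolution with the even kernel $I_{1-s}$. Here the decay $I_{1-s}u=O(|x|^{-(n-1+s)})$ at infinity, together with the compact support of $v$, removes any boundary terms.

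\emph{Item (3).} On the Fourier side,
\[
\widehat{\mathcal R\cdot\nabla^s u}(\xi)=-i\frac{\xi}{|\xi|}\cdot\frac{2\pi i\xi}{|2\pi\xi|^{1-s}}\widehat u(\xi)=|2\pi\xi|^{s}\,\widehat u(\xi).
\]
Applying $I_s$ multiplies this by $|2\pi\xi|^{-s}$, so $I_s(\mathcal R\cdot\nabla^s u)=u$.

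For the kernel form, write $I_s\mathcal R_j$ as convolution with $\mathcal R_j I_s$. The symbol $-i\frac{\xi_j}{|\xi|}|2\pi\xi|^{-s}$ equals a constant multiple of $\partial_j I_{s+1}$, because
\[
\partial_j I_{s+1}\ \longleftrightarrow\ 2\pi i\xi_j|2\pi\xi|^{-s-1}.
\]
Also $\partial_j|x|^{-(n-s-1)}=-(n-s-1)x_j|x|^{-(n-s+1)}$. This yields the kernel $c\,\frac{x-y}{|x-y|^{n-s+1}}$, and the constant is then matched to $c_{n,-s}$ using the identity between $\gamma_{n,s+1}$ and $c_{n,-s}$.

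\emph{Main obstacle.} The main difficulty is justification rather than algebra. First, $\nabla^s u$ is not compactly supported; it decays only like $|x|^{-n-s}$. So $I_s$ and the Riesz transform must be applied to functions outside $\mathcal S$, and $\widehat{\nabla^s u}$ is not smooth at $0$. I would handle this by working in $L^2$ and using the integrability of the kernels, making sure every convolution converges absolutely. Second, the constant bookkeeping in item (3) must be checked carefully, in particular the sign and the value of $c_{n,-s}$.
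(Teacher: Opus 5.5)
Your argument is correct. The paper gives no proof of its own here, only a reference to Shieh--Spector, and your Fourier-side verification is the standard way these identities are established.

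Items (1) and (2), and the identity $I_s(\mathcal R\cdot\nabla^s u)=u$, are fine as written. Two small remarks on (2):
\begin{itemize}
\item The sign comes from the symbol being purely imaginary, once you use $\int f\bar g=\int \widehat f\,\overline{\widehat g}$; oddness is not what matters there.
\item In the direct integration by parts no decay of $I_{1-s}u$ is needed, since $v$ has compact support.
\end{itemize}
The justifications you flag in (3) go through as you suggest. The function $\mathcal R\cdot\nabla^s u=(-\Delta)^{s/2}u$ is smooth and $O(|x|^{-n-s})$, hence in $L^1\cap L^\infty$. So both $I_s$ of it and the kernel integral converge absolutely: the integrand is $O(|x-y|^{s-n})$ near the diagonal and $O(|y|^{-2n})$ at infinity. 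The multiplier identity then follows by pairing with Schwartz functions and applying Fubini.

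Two points in the kernel form of (3) need more care.

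\emph{The constant.} Differentiating $|x|^{-(n-s-1)}$ gives the kernel $\frac{n-1-s}{\gamma_{n,1+s}}\frac{x}{|x|^{n-s+1}}$. By $\Gamma(z+1)=z\Gamma(z)$ with $z=\frac{n-1-s}{2}$, one has $\frac{n-1-s}{\gamma_{n,1+s}}=c_{n,-s}$. Do not match it using the identity $\gamma_{n,1-s}c_{n,s}=n+1-s$ displayed before the proposition. That is a misprint for $n-1+s$, and only the corrected form, evaluated at $-s$, yields $c_{n,-s}$.

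\emph{The case $n=1$.} The factorisation $I_s\mathcal R_j=-\partial_j I_{1+s}$ requires $1+s<n$, so it fails for $n=1$. There $I_{1+s}$ is not defined, and its would-be symbol $|2\pi\xi|^{-1-s}$ is not locally integrable. In that case compute directly:
\begin{itemize}
\item The transform of $\operatorname{sgn}(x)|x|^{s-1}$ is $-2i\,\Gamma(s)\sin(\pi s/2)\operatorname{sgn}(\xi)|2\pi\xi|^{-s}$.
\item The duplication and reflection formulas give $2\Gamma(s)\sin(\pi s/2)\,c_{1,-s}=1$.
\item This produces exactly the symbol $-i\operatorname{sgn}(\xi)|2\pi\xi|^{-s}$ of $I_s\mathcal R$.
\end{itemize}
Alternatively, use the classical formula for the Fourier transform of $x_j|x|^{-(n+1-s)}$, a degree-one harmonic polynomial times a power of $|x|$. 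It is valid uniformly for all $n\ge 1$ because $0<s<n$.
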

For detailed proofs of these facts we refer to \cite{ShiehSpector2015}.

In the proof of Theorem \ref{Equivalence}, we will work with an alternative definition for the Riesz fractional gradient, which is in fact, the one that was originally introduced by Shieh and Spector in \cite{ShiehSpector2015}. Let $u\in L^p(\R^n)$ such that $I_{1-s}u$ is well defined. Then, we define the \textit{distributional Riesz fractional gradient} $D^su=(D^su)_j$ as $$(D^su)_j:=\frac{\partial^su}{\partial x_j^s}:=\frac{\partial}{\partial x_j}I_{1-s}u,\,j=1,\ldots,n,$$ in the sense that $$\left\langle \frac{\partial^su}{\partial x_j^s},\varphi\right\rangle:=-\int_{\R^n}I_{1-s}u\frac{\partial \varphi}{\partial x_j}\,dx=-\left\langle I_{1-s}u,\frac{\partial\varphi}{\partial x_j}\right\rangle,$$ for every $\varphi\in C_c^\infty(\R^n)$. Since by \cite[Theorem 1.2]{ShiehSpector2015}, $D^su=I_{1-s}(Du)=D(I_{1-s}u)$, $D^s$ and $\nabla^s$ coincide at least for smooth compactly supported functions.

\section{Generalized Bessel-Orlicz potential spaces}
In this section we introduce the Bessel-Orlicz potential spaces and we prove that those are equivalent to the generalized Sobolev-Orlicz spaces in the whole space $\R^n$ associated to the Riesz fractional gradient introduced in \cite{campos2024}.

For simplicity, we will use the notation $\left\langle\xi\right\rangle:=\left(1+4\pi^2|\xi|^2\right)^{1/2}, \xi\in \R^n$, sometimes called \textit{japanese bracket}.
\begin{Defi}\em
    Let $s\in \C$, $f\in \mathcal{S}'(\R^n)$ (where $ \mathcal{S}'(\R^n)$ is the space of tempered distributions) and $\xi\in \R^n$. We define the \textit{Bessel potential} $\Lambda_s$ of order $s$ of $f$ as $$\Lambda_{s}f=\mathfrak{F}^{-1}\left(\left\langle\xi\right\rangle^{-s}\widehat{f}(\xi)\right).$$
    The name Bessel potential comes from the fact that for every $s>0$, $$\Lambda_sf=G_s*f,\,f\in \mathcal{S}'(\R^n),$$ where $G_s$ is the \textit{Bessel kernel} defined as $$G_s(x):=\frac{1}{(4\pi)^{n/2}\Gamma(n/2)}\int_0^\infty e^{-t/(4\pi)}e^{-|x|^2\pi/t}t^{(s-n)/2}\,\frac{dt}{t},\,x\in \R^n,$$ which satisfies that $\norm{G_s}_1=1$. 
\end{Defi}
\begin{Defi}\em Let $A\in \Phi(\R^n)$ and $s>0$. We define the \textit{generalized Bessel-Orlicz potential space} $H^{s,A}(\R^n)$ as the space 
    $$H^{s,A}(\R^n):=\{u\in \mathcal{S}'(\R^n): \Lambda_{-s}u\in L^A(\R^n)\},$$ endowed with the norm $$\norm{u}_{H^{s,A}(\R^n)}:=\norm{\Lambda_{-s}u}_{L^A(\R^n)}.$$
\end{Defi}
Note that this space is well defined as a subspace of $L^A(\R^n)$.
\begin{Lema}
    Let $A\in \Phi(\R^n)$ and $s>0$. Then, $H^{s,A}(\R^n)\xhookrightarrow{} L^A(\R^d)$.
\end{Lema}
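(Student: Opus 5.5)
The plan is to write every $u\in H^{s,A}(\R^n)$ as a Bessel potential of an $L^A$ function and then show that convolution with the Bessel kernel is bounded on $L^A(\R^n)$. Set $f:=\Lambda_{-s}u\in L^A(\R^n)$. Since $\left\langle\xi\right\rangle^{-s}\left\langle\xi\right\rangle^{s}=1$, the Fourier side gives $u=\Lambda_s\Lambda_{-s}u=\Lambda_sf$ in $\mathcal{S}'(\R^n)$. Because $s>0$, $\Lambda_sf=G_s*f$ with $G_s\ge 0$ and $\norm{G_s}_1=1$. This identity is first understood for $f\in\mathcal{S}$ and then for $f\in L^A$. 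For the latter we use that $L^A\subset L^1_{loc}$ with at most polynomial growth in the tempered sense (e.g.\ $L^A\hookrightarrow L^p+L^q$ under $(A0)$, $(Inc)_p$, $(Dec)_q$, or simply $L^A\hookrightarrow L^1+L^\infty$ under $(A0)$). Hence $G_s*f$ is a locally integrable tempered distribution whose Fourier transform is $\left\langle\xi\right\rangle^{-s}\widehat f$. It therefore suffices to prove
$$\norm{G_s*f}_{L^A(\R^n)}\le \norm{f}_{L^A(\R^n)},\qquad f\in L^A(\R^n),$$
which gives $\norm{u}_{L^A}\le\norm{u}_{H^{s,A}}$.

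The central step is the modular estimate. Take $\rho>\norm{f}_{L^A}$, so that $J_A(f/\rho)\le1$. Since $G_s\,dy$ is a probability measure, Jensen's inequality applies pointwise, using that $A(x,\cdot)$ is convex and increasing:
$$A\Big(x,\tfrac{|G_s*f(x)|}{\rho}\Big)\le A\Big(x,\int G_s(y)\tfrac{|f(x-y)|}{\rho}dy\Big)\le \int G_s(y)A\Big(x,\tfrac{|f(x-y)|}{\rho}\Big)dy.$$
In the classical ($x$-independent) case, Fubini then gives $J_A((G_s*f)/\rho)\le J_A(f/\rho)\le 1$ directly. In the generalized case, however, $A(x,|f(x-y)|)$ has its $x$-dependence mismatched with the translated argument. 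This mismatch is the main obstacle.

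To handle it, I would invoke the harmonic-analysis machinery already used in the paper rather than the translation argument. Since $G_s$ is radial, decreasing and integrable, one has the standard pointwise bound $|G_s*f|\le\norm{G_s}_1 Mf=Mf$, where $M$ is the Hardy–Littlewood maximal operator. Under $(A0)$, $(A1)$, $(A2)$ and $(Inc)_p$, $M$ is bounded on $L^A(\R^n)$ (Harjulehto–Hästö, the same source as Lemma \ref{lema2}). This yields $\norm{G_s*f}_{L^A}\le C\norm{f}_{L^A}$. The statement is written with $A\in\Phi(\R^n)$ only, so I would read it under the paper's standing hypotheses $(A0)$–$(A2)$, $(Inc)$, $(Dec)$.

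An alternative route uses Theorem \ref{Orliczmultiplier}. The symbol $m(\xi)=\left\langle\xi\right\rangle^{-s}$ satisfies $|\partial^\alpha m(\xi)|\lesssim\left\langle\xi\right\rangle^{-s-|\alpha|}\le|\xi|^{-|\alpha|}$, so $\Lambda_s$ extends to a bounded operator on $L^A$. One then checks that this extension agrees with $G_s*f$ through density of $\mathcal{S}$ in $L^A$, which holds by the doubling property. Either way we obtain $\norm{u}_{L^A}\le C\norm{u}_{H^{s,A}}$, that is, the continuous embedding $H^{s,A}(\R^n)\hookrightarrow L^A(\R^n)$.
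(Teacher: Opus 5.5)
Your proof is correct and has the same skeleton as the paper's: write $u=\Lambda_s\Lambda_{-s}u=G_s*(\Lambda_{-s}u)$, then use that convolution with the bell-shaped kernel $G_s$, $\norm{G_s}_1=1$, is bounded on $L^A(\R^n)$.

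You differ from the paper in the key lemma. The paper cites a convolution estimate for radially decreasing integrable kernels in generalized Orlicz spaces (Harjulehto--H\"ast\"o, Lemma 4.4.6). That estimate comes from the averaging-operator machinery under (A0)--(A2) and does not pass through boundedness of the maximal operator. You instead use the pointwise bound $|G_s*f|\le Mf$ together with boundedness of $M$, which additionally needs $(Inc)_p$ with $p>1$. Your multiplier alternative needs the full hypothesis list of Theorem \ref{Orliczmultiplier}. So you prove the lemma under somewhat stronger assumptions, although they are the paper's standing ones.

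You are right that some assumptions must be read into the statement. Take $A(x,t)=|x-x_0|^{-n}t^p$ and any nonnegative, nonzero $f\in L^A$ supported away from $x_0$. Then $G_s*f\ge c>0$ near $x_0$, so $G_s*f\notin L^A$, and the embedding fails. The paper's proof also silently uses (A0)--(A2).

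Your route has two advantages over the paper's. First, you justify $\Lambda_s f=G_s*f$ for $f\in L^A$ via $L^A\hookrightarrow L^1+L^\infty$ under (A0), which the paper takes for granted. Second, your Jensen--Fubini computation shows that in the classical $x$-independent case the embedding holds with constant $1$ and no further assumptions.
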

\noindent{}\textbf{Proof:} Let $f\in H^{s,A}(\R^n)$. We say that a positive function $\sigma\in L^1(\R^n)$ is a \textit{bell-shaped} function if it is radially decreasing and symmetric. 
    Let $G_s$ be the Bessel kernel when $s>0$. In that case $\Lambda_s f=G_s*f$. Then, $\|G_s\|_{1}=1$, and clearly $G_s$ is radially decreasing and radially symmetric. Hence, by \cite[Lemma 4.4.6]{harjuleto2019}, there exists a positive constant $C$ such that $$\norm{G_s*f}_{L^A(\R^n)}\leq C \norm{f}_{L^A(\R^n)},$$ so, 
   $$
        \|f\|_{L^A(\R^n)}= \|\Lambda_{s}\Lambda_{-s} f\|_{L^A(\R^n)}\leq C\|\Lambda_{-s}f\|_{L^A(\R^n)}=C\|f\|_{H^{s,A}(\R^n)}.\qed $$

In \cite{campos2024}, generalized fractional Orlicz-Sobolev spaces were introduced as:
\begin{Defi}
    Let $A\in\Phi(\R^n)$ and $s\in [0,1]$. We define the space $X^{s,A}(\R^n)$ as $$X^{s,A}(\R^n)=\overline{C^\infty_c(\R^n)}^{\|\cdot\|_{s,A}},$$ where $$\norm{u}_{s,A}:=\norm{u}_{L^A(\R^n)}+\norm{\nabla^s u}_{L^A(\R^n;\R^n)}.$$
\end{Defi}
In order to make clear the action of the Riesz fractional gradient on functions $u\in X^{s,A}(\R^n)$, we have to extend the definition of $\nabla^su$ by continuity. Since by definition, for every $u\in X^{s,A}(\R^n)$ there exists a sequence $\{u_m\}\subset C_c^\infty(\R^n)$ such that $u_m\to u$ in $L^A(\R^n)$ and $\{\nabla^su_m\}$ is Cauchy in $L^A(\R^n,\R^n)$, we define $$\nabla^su:=L^A(\R^n;\R^n)-\lim_{m\to \infty} \nabla^su_m.$$ This characterization allows us to directly establish the duality between the operators $\nabla^s$ and $\operatorname{div}^s$, i.e., the integration by parts formula, also holds for functions in $X^{s,A}(\R^n)$. This as well shows that the definition of $\nabla^su$ is independent of the choice of the Cauchy sequence $\{u_n\}$, and hence the space $X^{s,A}(\R^n)$ is well defined.

Note that for the choice $$A_p(x,t):=\frac{t^p}{p},\,1<p<\infty,$$ we have that $H^{s,A_p}(\R^n)=H^{s,p}(\R^n)$, with $H^{s,p}(\R^n)$ being the Bessel potential space (see \cite{BellidoGarcia2025}), and by \cite[Theorem 1.7]{ShiehSpector2015}, $H^{s,A_p}(\R^n)=X^{s,A_p}(\R^n)$. This equivalence also holds for the more abstract setting of generalized Orlicz spaces. The proof is a direct application of the H\"ormanded-Mihlin theorem:
\begin{Teor}\label{Equivalence}
    Let $A\in \Phi(\R^n)$ satisfying $(A0), (A1), (A2), (Inc)_p, (Dec)_q$, and $s\in (0,1)$. Then, $$X^{s,A}(\R^n)=H^{s,A}(\R^n),$$ with equivalence of the norms.
\end{Teor}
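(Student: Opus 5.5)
We prove the two norm inequalities on the common dense class $C_c^\infty(\R^n)$ and then pass to the completions. Every step reduces to Theorem \ref{Orliczmultiplier}, applied to a few explicit Fourier multipliers.

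\textbf{Step 1: $\norm{u}_{s,A}\lesssim \norm{u}_{H^{s,A}(\R^n)}$.} Let $u\in C_c^\infty(\R^n)$. By Proposition \ref{DsProps}(1),
$$\widehat{\nabla^s u}(\xi)=\frac{2\pi i\xi}{|2\pi\xi|^{1-s}}\langle\xi\rangle^{-s}\,\widehat{\Lambda_{-s}u}(\xi).$$
So $\nabla^s u=T_m(\Lambda_{-s}u)$, where $m(\xi)=2\pi i\xi\,|2\pi\xi|^{s-1}\langle\xi\rangle^{-s}$. Each component of $m$ is smooth on $\R^n\setminus\{0\}$ and behaves like $|\xi|^{s}$ near the origin. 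It is a product of $\xi/|\xi|$ (homogeneous of degree $0$) with $\left(|2\pi\xi|^2/(1+|2\pi\xi|^2)\right)^{s/2}$. This is a function of $r=|2\pi\xi|^2$ whose derivatives satisfy $|\partial^\alpha|\lesssim|\xi|^{-|\alpha|}$, near zero because $s>0$ and at infinity by symbol calculus. Hence Theorem \ref{Orliczmultiplier} gives $\norm{\nabla^s u}_{L^A}\lesssim\norm{\Lambda_{-s}u}_{L^A}$. The preceding Lemma gives $\norm{u}_{L^A}\lesssim\norm{\Lambda_{-s}u}_{L^A}$.

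\textbf{Step 2: $\norm{u}_{H^{s,A}(\R^n)}\lesssim \norm{u}_{s,A}$.} Split
$$\langle\xi\rangle^{s}=\frac{\langle\xi\rangle^{s}}{1+|2\pi\xi|^{s}}+\frac{\langle\xi\rangle^{s}}{1+|2\pi\xi|^{s}}\,|2\pi\xi|^{s}.$$
Then write $|2\pi\xi|^s=\sum_j \left(-i\xi_j/|\xi|\right)\left(2\pi i\xi_j|2\pi\xi|^{s-1}\right)$. This yields
$$\Lambda_{-s}u=T_{m_0}u+\sum_j T_{m_0}\mathcal{R}_j(\nabla^s u)_j,\qquad m_0(\xi)=\frac{\langle\xi\rangle^{s}}{1+|2\pi\xi|^{s}}.$$
The symbols $m_0$ and $-i\xi_j/|\xi|$ both satisfy the hypotheses of Theorem \ref{Orliczmultiplier}: $m_0$ is bounded and smooth away from $0$, and its derivative estimates are checked as in Step 1. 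Therefore $\norm{\Lambda_{-s}u}_{L^A}\lesssim \norm{u}_{L^A}+\norm{\nabla^s u}_{L^A}$. The one delicate point is that $m_0$ is only $C^{0,s}$ at the origin. If needed, we replace $1+|2\pi\xi|^s$ by a smoothed version $1+|2\pi\xi|^s\chi$, or simply note that the theorem only needs regularity on $\R^n\setminus\{0\}$.

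\textbf{Step 3: completion.} The two inequalities show that the norms are equivalent on $C_c^\infty(\R^n)$. The space $X^{s,A}$ is the closure of $C_c^\infty$ under $\norm{\cdot}_{s,A}$. So it remains to check that $C_c^\infty$ is dense in $H^{s,A}(\R^n)$; this is the step I expect to be the main obstacle. $C_c^\infty$ is dense in $L^A$ because $A$ satisfies $(A0)$ and is doubling. Given $u\in H^{s,A}$, pick $\varphi_k\in C_c^\infty$ with $\varphi_k\to\Lambda_{-s}u$ in $L^A$. Then $\Lambda_s\varphi_k\to u$ in $H^{s,A}$, and $\Lambda_s\varphi_k\in\mathcal{S}$, but it is not compactly supported. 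We therefore truncate: $\eta(\cdot/R)\Lambda_s\varphi_k$ is in $C_c^\infty$ and converges to $\Lambda_s\varphi_k$ in $H^{s,A}$. This holds because $H^{s,A}$-convergence of Schwartz functions is controlled by finitely many $L^A$ norms of derivatives, each bounded by weighted sup norms (via $(A0)$ and $L^p\cap L^q\hookrightarrow L^A$). Concretely, for $\psi\in\mathcal S$ we have $\norm{\Lambda_{-s}\psi}_{L^A}\lesssim\norm{(I-\Delta)\psi}_{L^A}$ by Theorem \ref{Orliczmultiplier}. This tends to $0$ for $\psi=(1-\eta(\cdot/R))\Lambda_s\varphi_k$, since $L^p\cap L^q\hookrightarrow L^A$.

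Finally, we identify the limits. For $u\in X^{s,A}$, the extended $\nabla^s u$ agrees with the distributional $D^s u$, as in the paper's remark on $D^s$. Hence the limit objects in the two completions coincide, and $X^{s,A}(\R^n)=H^{s,A}(\R^n)$ with equivalent norms.
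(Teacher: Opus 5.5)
Your proposal is correct and follows essentially the paper's route. For $\norm{\nabla^s u}_{L^A}\lesssim\norm{\Lambda_{-s}u}_{L^A}$ you use the multiplier $i\frac{\xi}{|\xi|}\frac{|2\pi\xi|^s}{\langle\xi\rangle^s}$, which the paper writes as $-\mathcal{R}$ composed with $m_s$; for the reverse bound you use the same operator $K_s$ with symbol $\langle\xi\rangle^s/(1+|2\pi\xi|^s)$ applied to $u+\sum_j\mathcal{R}_j(\nabla^s u)_j$. Your Step 3, which truncates $\Lambda_s\varphi_k\in\mathcal S$ to show that $C_c^\infty$ is dense in $H^{s,A}(\R^n)$, spells out a density step the paper only invokes implicitly: there $u=\Lambda_s v$ with $v\in C_c^\infty$ is Schwartz but not compactly supported.
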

\noindent{}\textbf{Proof:} We first prove the inclusion 
    $H^{s,A}(\R^n)\xhookrightarrow{}X^{s,A}(\R^n)$.
    Let $u\in H^{s,A}(\R^n)$. This means that exists a function $v\in L^A(\R^n)$ such that $u=\Lambda_s v$. Since $H^{s,A}(\R^n)\subset L^A(\R^n)$, we only need to prove that $\nabla^s u\in L^A(\R^N;\R^N)$.
    
    Assume that $v\in C_c^\infty(\R^n)$. Since $v\in C_c^\infty(\R^n)$ and $u\in L^A(\R^n)\subset L^p(\R^n)+L^q(\R^n)$, then for all $\varphi\in C_c^\infty(\R^n)$,
    \begin{align*}
        \left\langle\frac{\partial^s u}{\partial x_j^s}, \varphi\right\rangle
        &=-\left\langle I_{1-s}u,\frac{\partial \varphi}{\partial x_j}\right\rangle
        =-\left\langle I_{1-s}\Lambda_s v,\frac{\partial \varphi}{\partial x_j}\right\rangle\\
        &=-\left\langle \Lambda_s(I_{1-s}v),\frac{\partial \varphi}{\partial x_j}\right\rangle
        =\left\langle \Lambda_s\frac{\partial^s v}{\partial x_j^s},\varphi\right\rangle.
    \end{align*}
    Since $v\in C_c^\infty(\R^n)\subset\mathcal{S}(\R^n)$, the Riesz potential $I_{1-s}$ transforms rapidly decreasing functions into tempered distributions, and both the classical gradient $D$ and the Bessel Potential $\Lambda_s$ transform tempered distributions into tempered distributions, this means that $\frac{\partial^s u}{\partial x_j^s}$ is a tempered distribution, and so, we have the following identity for its Fourier transform:
    \begin{align*}
        &\left\langle\mathfrak{F}\frac{\partial^s u}{\partial x_j^s}, \psi\right\rangle=\left\langle \mathfrak{F}\left(\Lambda_s*\frac{\partial^s v}{\partial x_j^s}\right),\psi\right\rangle=\left\langle \Lambda_s\frac{\partial^s v}{\partial x_j^s},\mathfrak{F}\psi\right\rangle=-\left\langle \Lambda_s(I_{1-s}v),\frac{\partial \mathfrak{F} \psi}{\partial x_j}\right\rangle\\
        &=-\left\langle \Lambda_s(I_{1-s}v),\mathfrak{F}(-2\pi i\xi_j \psi)\right\rangle=\left\langle \mathfrak{F}\Lambda_s\mathfrak{F}I_{1-s}\mathfrak{F}v,2\pi i\xi_j \psi\right\rangle\\
        &=\left\langle\langle\xi\rangle^{-s}|2\pi\xi|^{s-1}\widehat{v}, 2\pi i\xi_j\psi\right\rangle=\left\langle i\frac{\xi_j}{|\xi|}\frac{(2\pi|\xi|)^s}{(1+4\pi^2|\xi|^2)^{s/2}}\widehat{g}, \psi\right\rangle.
    \end{align*}
    By Theorem \ref{Orliczmultiplier}, the operator $T$ with symbol $$m_s(\xi):=\frac{(2\pi|\xi|)^s}{\langle\xi\rangle^s},$$ is bounded in $L^A(\R^n)$. Since $$\frac{\partial^s u}{\partial x_j^s}=-\mathcal{R}_j(\check{m_s}*v),$$ and $\mathcal{R}$ is a Calder\'on-Zymgund operator, by Lemma \ref{lema2},
    \begin{equation*}
        \|\nabla^s u\|_{L^A(\R^n,\R^n)}=\|\mathcal{R}\cdot(\check{m_s}*v)\|_{L^A(\R^n,\R^n)}\leq C\|\check{m_s}*v\|_{L^A(\R^n,\R^n)}\leq C'\|v\|_{L^A(\R^n)}.
    \end{equation*}
    Since $C_c^\infty(\R^n)$ is dense in $L^A(\R^n)$, the inequality can be extended for all $v\in L^A(\R^N)$, and so $\nabla^s u\in L^A(\R^n)$ for all $u\in H^{s,A}(\R^n)$.
    
    To establish the reverse inclusion,
    $X^{s,A}(\R^n)\subset H^{s,A}(\R^n)$, we take $f\in C^\infty_c(\R^n)\cap X^{s,A}(\R^n)\subset X^{s,A}(\R^n)$ with the inclusion being dense by definition.
We need to find a function $g\in L^A(\R^n)$ such that $f=\Lambda_s g$. 
    Let us define the operator $K_s$ whose symbol is given by $\frac{\langle \xi\rangle^{s}}{1+(2\pi|\xi|)^s}$. By Theorem \ref{Orliczmultiplier}, $K_s$ is continuous in $L^A(\R^n)$. Define the function $$g:=K_s\left(f+\sum_{j=1}^n{\mathcal{R}_j\left(\frac{\partial^s f}{\partial x_j^s}\right)}\right).$$ Then
    \begin{align*}
        \|g\|_{L^A(\R^n)}&\leq C\left\|f+\sum_{j=1}^N{\mathcal{R}_j\left(\frac{\partial^s f}{\partial x_j^s}\right)}\right\|_{L^A(\R^n)}\leq C \left(\norm{f}_{L^A(\R^n)}+\norm{\mathcal{R}\cdot \nabla^s f}_{L^A(\R^n)}\right)\\
        &\leq C\|f\|_{X^{s,A}(\R^n)},
    \end{align*}
    which means that $g\in L^A(\R^n)$. Moreover, $f=\Lambda_s g$ since
    \begin{align*}
        \mathfrak{F}\{\Lambda_sg\}(\xi)&=(1+4\pi|\xi|^2)^{-s/2}\mathfrak{F}\{K_s\}(\xi)\left(\mathfrak{F}\{f\}(\xi)+\sum_{j=1}^n\frac{-i\xi_j}{|\xi|}(2\pi)^si\xi_j|\xi|^{s-1}\mathfrak{F}\{f\}(\xi)\right)\\
     &=(1+4\pi|\xi|^2)^{-s/2}\frac{(1+4\pi|\xi|^2)^{s/2}}{1+(2\pi|\xi|)^s}\left(1+\frac{(2\pi)^s|\xi|^2}{|\xi|^{2-s}}\right)\mathfrak{F}\{f\}(\xi)=\frac{1+(2\pi|\xi|)^2}{1+(2\pi|\xi|)^s}\mathfrak{F}\{f\}(\xi)\\
     &=\mathfrak{F}\{f\}(\xi),
    \end{align*}
    in the sense of tempered distributions. Then by density, we get the desired result for $f\in X^{s,A}(\R^n)$.\qed

    We can define the generalized fractional Orlicz-Sobolev spaces for bounded domains extended by zero on $\Omega\subset \R^n$ as $$H_0^{s,A}(\Omega):=\overline{C_c^\infty(\Omega)}^{X^{s,A}(\R^n)}=\overline{C_c^\infty(\Omega)}^{H^{s,A}(\R^n)}.$$ Clearly, if $\Omega=\R^n$ we have that $H^{s,A}(\R^n)=H_0^{s,A}(\R^n)$. We leave for a subsequent work the question of proving that $$H_0^{s,A}(\Omega)=\{u\in H^{s,A}(\R^n): u|_{\Omega^c}=0\}.$$ 
    The space $H^{s,A}_0(\Omega)$ was studied exploiting the properties of the Riesz fractional. For the sake of completeness, we include here the main structural results obtained. The first one is the fractional Sobolev embedding on the generalized Orlicz setting \cite[Theorem 1]{campos2024}:
    \begin{Teor}[Sobolev inequality]
    Let $p,q,r\geq 1$ such that $s\in(0,1)$ such that $\gamma:=\frac{s}{d}=\frac{1}{p}-\frac{1}{q}$ and $r\in(\gamma,\frac{1}{p}]$. Consider two $\Phi$-functions $A,B\in\Phi(\R^n)$ satisfying (A0), (A1) and (A2). Assume also that $A$ is $inc_p$ and $dec_{1/r}$, $B$ is $inc{q}$ and $dec{\frac{1}{r-\gamma}}$, and that $A^{-1}(x,t)\approx t^\gamma B^{-1}(x,t)$.
    Then, $H^{s,A}_0(\Omega)\subset L^B(\Omega)$ with the inequality
    \begin{equation*}
        \|u\|_{L^B(\Omega)}\leq C\|\nabla^s u\|_{L^A(\R^d;\R^d)}
    \end{equation*}
\end{Teor}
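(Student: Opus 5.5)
The plan is to reduce the inequality to a Sobolev-type bound for the Riesz potential $I_s$ on generalized Orlicz spaces, using the fractional fundamental theorem of calculus in Proposition~\ref{DsProps}(3). By density (the definition of $H^{s,A}_0(\Omega)$ as a closure of $C_c^\infty(\Omega)$ in the norm of Theorem~\ref{Equivalence}), it suffices to prove
\[
\norm{u}_{L^B(\Omega)}\leq C\norm{\nabla^s u}_{L^A(\R^n;\R^n)}
\]
for $u\in C_c^\infty(\Omega)$. To pass to the limit, take $u_m\to u$ in $H^{s,A}$. Then $\nabla^s u_m\to \nabla^s u$ in $L^A$, so $(u_m)$ is Cauchy in $L^B(\Omega)$. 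Its $L^B$-limit agrees with the $L^A$-limit $u$ a.e., since both spaces embed into $L^1_{loc}$ (via $L^p+L^q$ bounds coming from $(A0)$ and $(Inc)/(Dec)$).

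For smooth $u$, I write $u=I_s(\mathcal{R}\cdot\nabla^s u)$. The Riesz transforms $\mathcal{R}_j$ are Calder\'on-Zygmund operators, and $A$ satisfies $(A0)$--$(A2)$, $(Inc)_p$ and $(Dec)_{1/r}$ with $1/r<\infty$. Hence Lemma~\ref{lema2} gives
\[
\norm{\mathcal{R}\cdot\nabla^s u}_{L^A(\R^n)}\lesssim\norm{\nabla^s u}_{L^A(\R^n;\R^n)}.
\]
It then remains to show that
\[
I_s:L^A(\R^n)\to L^B(\R^n)
\]
is bounded under the hypothesis $A^{-1}(x,t)\approx t^\gamma B^{-1}(x,t)$ with $\gamma=s/n$. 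Restricting to $\Omega$ then finishes the proof. Note that boundedness of $\Omega$ is not even needed here, and the constant depends only on $n$, $s$ and the structural constants.

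The main step, and the expected obstacle, is the Orlicz-Hardy-Littlewood-Sobolev bound for $I_s$. I would follow the Hedberg approach adapted to $\Phi$-functions, as in the Riesz potential chapter of \cite{harjuleto2019}. Let $f\in L^A$ with $\norm{f}_{L^A}\le 1$. Split
\[
I_sf(x)=\int_{|x-y|<\delta}+\int_{|x-y|\ge\delta}.
\]
The near part is bounded by $C\delta^{s}Mf(x)$, where $M$ is the Hardy-Littlewood maximal function. The far part is bounded by H\"older's inequality in $L^A$ together with the norm of $|\cdot|^{s-n}\chi_{\{|\cdot|\ge\delta\}}$ in $L^{A^*}$. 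Using $(Dec)_{1/r}$ with $r>\gamma$, the far part should be controlled by roughly $\delta^{s}A^{-1}(x,\delta^{-n})$. Conditions (A1) and (A2) are what allow the $x$-dependence to be frozen locally, with (A2) handling the small values of $t$.

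Next I choose $\delta$ to balance the two terms, namely $\delta^{-n}\approx A(x,Mf(x))$. This gives the pointwise estimate
\[
|I_sf(x)|\lesssim Mf(x)\,A(x,Mf(x))^{-\gamma}\approx B^{-1}\!\left(x,A(x,Mf(x))\right),
\]
where the last step uses the hypothesis relating $A^{-1}$ and $B^{-1}$. Consequently $B(x,c|I_sf|)\lesssim A(x,Mf)$, up to an error that is integrable by (A2). Integrating, the bound follows from the boundedness of $M$ on $L^A$, which holds under (A0)--(A2) and $(Inc)_p$ with $p>1$. The conditions $(Inc)_q$ and $(Dec)_{1/(r-\gamma)}$ on $B$ guarantee that $B^{-1}$ is well behaved, so that the compositions above are legitimate.

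If I find that this pointwise Hedberg argument becomes too delicate to carry out in full generality, I would instead cite the corresponding Riesz potential theorem from \cite{harjuleto2019}. That theorem requires exactly this relation between $A$ and $B$, so citing it would be the fallback.
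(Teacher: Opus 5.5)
Your proposal follows the paper's proof. The paper reduces to $u\in C_c^\infty(\Omega)$, writes $u=I_s(\mathcal{R}\cdot\nabla^s u)$ via Proposition~\ref{DsProps}(3), bounds $\norm{\mathcal{R}\cdot\nabla^s u}_{L^A}$ by Lemma~\ref{lema2}, and concludes with $\norm{I_s v}_{L^B(\R^n)}\le C\norm{v}_{L^A(\R^n)}$ cited from \cite[Corollary 5.4.5]{harjuleto2019}, which is your fallback (your explicit density step is a small addition the paper leaves implicit). Citing that result rather than running your Hedberg sketch is the right choice, since freezing the $x$-dependence of $A$ in the far-field part is exactly the delicate point your sketch leaves heuristic.
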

\begin{proof}
    By \cite[Corollary 5.4.5]{harjuleto2019},
    \begin{equation*}
        \|I_s v\|_{L^B(\R^n)}\leq C\|v\|_{L^A(\R^n)},\quad\forall v\in L^A(\R^d).
    \end{equation*}
    Let $u\in C^\infty_c(\Omega)$. If we set $v:=\mathcal{R}\cdot \nabla^s u$, by the fractional theorem of calculus on Proposition \ref{DsProps}(3) and Lemma \ref{lema2}, $$\norm{u}_{L^B(\Omega)}=\norm{I_sv}_{L^B(\R^n)}\leq C\norm{ \mathcal{R}\cdot \nabla^s u}_{L^A(\R^n)}\leq C'\norm{\nabla^s u}_{L^A(\R^n)},$$ for some positive constant $C'$.
\end{proof}
The next result is a Poincar\'e inequality for $\nabla^s$ in this framework \cite[Theorem 2]{campos2024}:
\begin{Teor}[Poincaré inequality]\label{thm:poincare_inequality}
    Let $\Omega\subset\R^n$ be a bounded open set, $s\in(0,1)$, and $A\in\Phi(\R^n)$ satisfying $(A0), (A1), (A2)$. Then, there exists a positive constant $C>0$, independent of $s$ and $u$, such that
    \begin{equation*}
        \|u\|_{L^A(\Omega)}\leq \frac{C}{1-2^{-s}}\|\nabla^s u\|_{L^A(\Omega_1)},\quad \forall u\in H^{s,A}_0(\Omega).
    \end{equation*}
\end{Teor}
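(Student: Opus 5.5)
The plan is to follow the classical Shieh--Spector strategy for the fractional Poincar\'e inequality and transfer it to $L^A$ using only the bounded operators available in the Musielak--Orlicz setting. By density of $C_c^\infty(\Omega)$ in $H^{s,A}_0(\Omega)$ and continuity of both sides in the $X^{s,A}$-norm (Theorem \ref{Equivalence}), it suffices to prove the inequality for $u\in C_c^\infty(\Omega)$. Here $\Omega_1:=\{x\in\R^n:\operatorname{dist}(x,\Omega)<1\}$, say.

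\textbf{Representation.} Start from the fractional fundamental theorem of calculus, Proposition \ref{DsProps}(3):
$$u(x)=c_{n,-s}\int_{\R^n}\nabla^su(y)\cdot\frac{x-y}{|x-y|^{n-s+1}}\,dy .$$
For $x\in\Omega$, split the integral into the region $y\in\Omega_1$ and its complement $y\notin\Omega_1$.

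\textbf{Near part.} For $y\in\Omega_1$ the kernel is dominated by $|x-y|^{s-n}$ with $|x-y|\le R:=\operatorname{diam}(\Omega_1)$. The truncated Riesz kernel $|z|^{s-n}\chi_{\{|z|\le R\}}$ is integrable, radially decreasing and symmetric, i.e.\ bell-shaped. Its $L^1$ norm is
$$\omega_{n-1}\frac{R^s}{s},$$
and multiplying by $c_{n,-s}\approx s$ gives a constant that is uniform in $s$. By \cite[Lemma 4.4.6]{harjuleto2019}, already used in the proof that $H^{s,A}\hookrightarrow L^A$, the near part is bounded by $C\norm{\nabla^s u}_{L^A(\Omega_1)}$.

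\textbf{Far part.} For $y\notin\Omega_1$ one cannot simply bound the tail by $\norm{\nabla^s u}_{L^A}$ on the complement, since the right-hand side only involves $\Omega_1$. Instead, I would use that $u$ vanishes outside $\Omega$, so $\nabla^s u(y)$ for $y$ far away is given by an explicit integral:
$$\nabla^su(y)=-c_{n,s}\int_\Omega \frac{u(z)(y-z)}{|y-z|^{n+s+1}}\,dz .$$
Substituting this into the far integral and exchanging the order of integration produces an operator $u\mapsto \int_\Omega K_s(x,z)u(z)\,dz$. The aim is to show that this operator has norm at most $\lambda_s<1$ on $L^A(\Omega)$, where the geometric factor $2^{-s}$ comes from a dyadic decomposition in $|x-y|$ of the tail. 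Concretely:
\begin{itemize}
\item[(i)] Decompose the exterior region into annuli $2^k\le|x-y|<2^{k+1}$.
\item[(ii)] On each annulus, bound the kernel by a bell-shaped function of $x-z$ with $L^1$ mass proportional to $2^{-ks}$.
\item[(iii)] Sum the geometric series, which gives a mass comparable to $(1-2^{-s})^{-1}$ multiplied by $\norm{u}$ only through a factor strictly less than $1$.
\end{itemize}
One then absorbs this term into the left-hand side.

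\textbf{Main obstacle.} The delicate point is obtaining a strictly contractive constant in the far term uniformly in the Luxemburg norm, because the bell-shaped convolution lemma only gives $\norm{\sigma*f}_{L^A}\le C\norm{\sigma}_1\norm{f}_{L^A}$ with a constant $C\ge1$ coming from (A0)--(A2).

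If absorption fails for that reason, the fallback is to avoid it altogether. I would write $u=I_s(\chi_{\Omega_1}\mathcal{R}\cdot\nabla^s u)+I_s(\chi_{\Omega_1^c}\mathcal{R}\cdot\nabla^s u)$, handle the first term by the truncated-kernel bound above, and estimate the second on $\Omega$ pointwise by a telescoping dyadic argument in the style of \cite{campos2024}. The scale-$2^k$ averages of $\nabla^s u$ are controlled by averages at scale $2^{k-1}$ up to a factor $2^{-s}$, which sums to $C/(1-2^{-s})$ times a maximal-type average of $|\nabla^s u|$ over $\Omega_1$. That average is then bounded in $L^A$ using the boundedness of the maximal operator under (A0)--(A2).
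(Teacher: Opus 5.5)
There is a genuine gap in the far part: the absorption you rely on cannot be carried out with the cut-off at distance $1$. For $x,z\in\Omega$ and $y\notin\Omega_1$ you have $|x-y|\ge 1$ and $|y-z|\ge 1$. Splitting according to whether $|y-z|\le|y-x|$ or not gives $|K_s(x,z)|\le c_{n,-s}c_{n,s}\int_{\{|y-x|\ge1,\,|y-z|\ge1\}}|y-z|^{-n-s}|x-y|^{s-n}\,dy\le\frac{2\omega_{n-1}}{n}\,c_{n,-s}c_{n,s}$. This is a bounded kernel of order one, with no parameter you can make small. The annulus $2^k\le|x-y|<2^{k+1}$ contributes $O(2^{-kn})$ once $2^k\ge2\operatorname{diam}\Omega$, so the sum is dominated by the innermost annuli. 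There is no geometric series in $2^{-s}$ at large scales, and step (iii) is self-contradictory anyway: a factor comparable to $(1-2^{-s})^{-1}>1$ cannot be contractive. What this route actually gives for the far part is a bound $C\,\|\chi_\Omega\|_{L^A}\|\chi_\Omega\|_{L^{A^*}}\|u\|_{L^A(\Omega)}$ (for $A(t)=t^2$, a multiple of $|\Omega|\,\|u\|_{L^2}$), and nothing forces that coefficient below $1$ for a general bounded $\Omega$. The fallback does not repair this, for two reasons. First, $I_s(\chi_{\Omega_1}\mathcal{R}\cdot\nabla^s u)$ requires $\mathcal{R}\cdot\nabla^su$ on $\Omega_1$, which is not controlled by $\nabla^s u|_{\Omega_1}$ because $\mathcal{R}$ is nonlocal. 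Second, there is no mechanism by which far-away averages of $\nabla^s u$ are dominated by nearer ones: outside $\Omega$ the values of $\nabla^su$ are dictated by $u$ itself through the formula you wrote, not by $\nabla^su|_{\Omega_1}$.

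The missing idea, which is how the paper proceeds, is to place the cut-off on a large ball rather than at distance $1$. With $\Omega\subset B_r$ and $|y|\ge 2r$, H\"older gives $|\nabla^su(y)|\le 2^{n+s}c_{n,s}|y|^{-n-s}\|u\|_{L^1(\Omega)}\lesssim|y|^{-n-s}\|\chi_\Omega\|_{L^{A^*}}\|u\|_{L^A(\Omega)}$. Moreover $\int_{\{|y|\ge 2r\}}|y|^{-n-s}|x-y|^{s-n}\,dy\lesssim r^{-n}$ for $x\in\Omega$. Hence the tail is at most $C(\Omega,A)\,r^{-n}\|u\|_{L^A(\Omega)}$, and choosing $r$ large makes the coefficient $\le\frac12$, so it can be absorbed. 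For the near part over $B_{2r}$, your bell-shaped bound with $R=3r$ is a perfectly good and simpler substitute for the paper's Diening averaging operators $T_k$. Note, however, that $c_{n,-s}$ is bounded above and below on $(0,1)$, not $\approx s$, so that bound is of order $r^s/s$. This is still fine because $1/s\le \ln2/(1-2^{-s})$. It is this near part, not the tail, that produces $(1-2^{-s})^{-1}$, exactly as in the paper, where it arises as $\sum_k 2^{-sk}$ over small scales around the singularity. The price of the large cut-off is that the right-hand side becomes $\|\nabla^s u\|_{L^A(B_{3r})}$ with $r=r(\Omega,A)$ large. That is what the paper's argument actually delivers; localizing literally to a unit neighbourhood $\Omega_1$ would need an ingredient beyond this absorption scheme.
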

\begin{proof}
    By density, it is enough to prove the result for $u\in C_c^\infty(\Omega)$. Consider $r>0$ large enough to have $\Omega\subset B_r(0)$. By the fractional fundamental theorem of calculus in Propositon \ref{DsProps}(2), \begin{align*}u(x)&=c_{n,-s}\int_{\R^n}\nabla^su(y)\cdot \frac{x-y}{|x-y|^{n-s+1}}\,dy\\
&=c_{n,-s}\left(\int_{B_{2r}}\nabla^su(y)\cdot \frac{x-y}{|x-y|^{n-s+1}}\,dy+\int_{B_{2r}^c}\nabla^su(y)\cdot \frac{x-y}{|x-y|^{n-s+1}}\,dy\right),\end{align*} and hence $$\norm{u}_{L^A(\Omega)}\leq c_{n,-s}\left(\norm{\int_{B_{2r}}\frac{|\nabla^su(y)|}{|x-y|^{n-s}}\,dy}_{L^A(\Omega)}+\norm{\int_{B_{2r}^c}\frac{|\nabla^su(y)|}{|x-y|^{n-s}}\,dy}_{L^A(\Omega)}\right).$$
We now proceed to estimate each integral separately. First, let $y\in B_{2r}^c$. By definition, $$|\nabla^su(y)|\leq c_{n,s}\int_{\Omega}\frac{|u(z)|}{|y-z|^{n+s}}\,dz.$$ Since $z\in \Omega\subset B_r$, and $y\in B_{2r}^c$, $|y-z|\geq \frac{|y|}{2}$, so
\begin{align*}
    |\nabla^su(y)|\leq \frac{2^{n+s}c_{n,s}}{|y|^{n+s}}\int_\Omega|u(z)|\,dz\leq \frac{2^{n+s}c_{n,s}}{|y|^{n+s}}\norm{\chi(\Omega)}_{L^{A'}(\Omega)}\norm{u}_{L^A(\Omega)}.
\end{align*}
 Hence, \begin{align*}\int_{B_{2r}^c}\frac{|\nabla^su(y)|}{|x-y|^{n-s}}\,dy&\leq 2^{n+s}c_{n,s}\norm{u}_{L^A(\Omega)}\norm{\chi(\Omega)}_{L^{A'}(\Omega)}\int_{B_{2r}^c}\frac{1}{|y|^{n+s}|x-y|^{n-s}}\,dy\\
&\leq \frac{2^{3-n}c_{n,s}\omega_nr^{-n}}{n}\norm{\chi(\Omega)}_{L^{A'}(\Omega)}\norm{u}_{L^A(\Omega)},\end{align*} thus \begin{align*}\norm{\int_{B_{2r}^c}\frac{|\nabla^su(y)|}{|x-y|^{n-s}}\,dy}_{L^A(\Omega)}&\leq \frac{2^{3-n}c_{n,s}\omega_nr^{-n}}{n}\norm{\chi(\Omega)}_{L^{A'}(\Omega)}\norm{u}_{L^A(\Omega)}\norm{\chi(\Omega)}_{L^A(\Omega)}.\end{align*}
For the other integral we use \cite[Lemma 6.1.4]{Diening}, which says that for every $x\in \R^n$, $\delta>0$, $0<s<n$, and $f\in L_{loc}^1(\R^n)$, we have that $$\int_{B(x,\delta)}\frac{|f(y)|}{|x-y|^{n-s}}\,dy\leq C(s)\delta^s\sum_{k=0}^\infty 2^{-sk}T_{k+k_0}f(x),$$ where $T_k$ is defined as the \textit{averaging operator} $$T_kf:=\sum_{Q\,\text{dyadic}\,: \text{diam}(Q)=2^{-k}}\chi(Q)M_{2Q}f,$$ and $k_0$ is an integer chosen such that $2^{-k_0-1}\leq \delta\leq 2^{-k_0}$. Hence, \begin{align*}
     \int_{B_{2r}}\frac{|\nabla^su(y)|}{|x-y|^{n-s}}\,dy\leq \int_{|x-y|\leq 3r}\frac{|\nabla^su(y)|}{|x-y|^{n-s}}\,dy\leq \frac{\pi^{n/2}}{\Gamma(n/2+1)}(3r)^s\sum_{k=0}^\infty 2^{-sk}T_{k+k_0}|\nabla^su(x)|,
 \end{align*} where $2^{-k_0-1}\leq 3r\leq 2^{-k_0}$. Since by \cite[Theorem 4.4.3]{harjuleto2019}, the family $(T_{k+k_0})_{k}$ is uniformly bounded from $L^A(\Omega)\to L^A(B_{3r})$, there exists a positive constant $C$, not depending on $s$ nor $u$, such that
        \begin{align*}
            \left\|\int_{B_{2r}}{\frac{|\nabla^s u(y)|}{|x-y|^{N-s}}}\,dy\right\|_{L^A(\Omega)}&
            \leq 2^n(3r)^s\sum_{k=0}^\infty{2^{-sk}\|T_{k+k_0}|\nabla^s u|\|_{L^A(\Omega)}}\\
            &\leq Cr^s\sum_{k=0}^\infty{2^{-sk}\|\nabla^s u\|_{L^A(B_{3r})}}\\
            &\leq \frac{Cr^s}{1-2^{-s}}\|\nabla^s u\|_{L^A(B_{3r})}\leq \frac{Cr}{1-2^{-s}}\|\nabla^s u\|_{L^A(B_{3r})}.
        \end{align*}
  Combining both estimates and using the fact that $c_{n,s}$ is uniformly bounded on the parameter $s\in [-1,1]$ (see \cite[Lemma 2.4]{BellidoCuetoMoraCorral2021}), we obtain that there exists a positive constant $C'$, not depending on $s$, such that $$\norm{u}_{L^A(\Omega)}\leq C'\left(r^{-n}\norm{u}_{L^A(\Omega)}+\frac{r}{1-2^{-s}}\norm{\nabla^su}_{L^A(\Omega)}\right).$$ The result now follows choosing $r$ large enough to have $C'r^{-n}\leq 1/2$.
\end{proof}
\begin{Obse}
    If we further assume that the function $A$ is $(aInc)$, the proof simplifies since by \cite[Lemma 6.1.4]{Diening},$$\int_{B(x,\delta)}\frac{|f(y)|}{|x-y|^{n-s}}\,dy\leq C\frac{\delta^s}{1-2^{-s}}Mf(x),$$ and by \cite[Theorem 4.3.4]{harjuleto2019}, $M:L^A(\Omega)\to L^A(\Omega)$ continuously. 
\end{Obse}
Finally, we recall the proof of the compact embedding of $H^{s,A}_0(\Omega)$ into $L^A(\Omega)$ \cite{campos2024}[Theorem 4]:
\begin{Teor}[Fractional Rellich Kondrachov]
    Let $\Omega\subset\R^n$ be an open bounded set and $0<s<1$. Let $A\in\Phi(\R^d)$ satisfy $(A0), (A1), (A2), Inc_{p}$ and $Dec_{q}$ for some $1< p<q<\infty$. Then
    \begin{equation*}
H^{s,A}_0(\Omega)\xhookrightarrow{}\xhookrightarrow{} L^A(\Omega).
    \end{equation*}
\end{Teor}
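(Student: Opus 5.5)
The plan is to reduce the statement to the classical Rellich–Kondrachov theorem in the generalized Orlicz setting (Proposition \ref{CompactEmbedding}), using the fractional fundamental theorem of calculus to trade $\nabla^s$ for a classical gradient. Everything is proved first for $u\in C_c^\infty(\Omega)$ and then extended by density.

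\emph{Step 1 (boundedness).} Take a sequence $(u_m)\subset C_c^\infty(\Omega)$ bounded in $H^{s,A}_0(\Omega)$. By Theorem \ref{Equivalence} the norms $\norm{\cdot}_{H^{s,A}}$ and $\norm{u}_{L^A}+\norm{\nabla^s u}_{L^A}$ are equivalent. Hence $u_m$ and $\nabla^s u_m$ are bounded in $L^A$.

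\emph{Step 2 (smoothing of order $1-s$).} Fix a ball $B\Supset\Omega$ and a cutoff $\chi\in C_c^\infty(B)$ with $\chi\equiv 1$ on a neighbourhood of $\overline{\Omega}$. Set $w_m:=\chi\,\Lambda_{1-s}u_m$. I will show that $(w_m)$ is bounded in $W^{1,A}_0(B)$:
\begin{itemize}
\item $\Lambda_{1-s}$ is bounded on $L^A$ by the bell-shaped kernel lemma (\cite[Lemma 4.4.6]{harjuleto2019}, as in the proof that $H^{s,A}\hookrightarrow L^A$).
\item $\nabla\Lambda_{1-s}u=\Lambda_{1-s}\nabla u$ has symbol $2\pi i\xi\langle\xi\rangle^{s-1}$. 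Write this as $\bigl(2\pi i\xi\,|2\pi\xi|^{s-1}\bigr)\cdot m(\xi)$ with $m(\xi)=|2\pi\xi|^{1-s}\langle\xi\rangle^{s-1}$. Then $\nabla\Lambda_{1-s}u_m=T_m\nabla^s u_m$, where $T_m$ is the Fourier multiplier with symbol $m$. Note that $m$ is the function $m_{1-s}$ appearing in the proof of Theorem \ref{Equivalence}, which satisfies the Mikhlin bounds, so Theorem \ref{Orliczmultiplier} gives $\norm{\nabla\Lambda_{1-s}u_m}_{L^A}\lesssim\norm{\nabla^s u_m}_{L^A}$.
\item The product rule and the boundedness of $\chi,\nabla\chi$ then give the claimed bound in $W^{1,A}_0(B)$.
\end{itemize}
By Proposition \ref{CompactEmbedding} on $B$, a subsequence of $(w_m)$ converges in $L^A(B)$.

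\emph{Step 3 (returning to $u_m$).} This is the main obstacle, since $u_m=\Lambda_{s-1}\Lambda_{1-s}u_m$ and $\Lambda_{s-1}$ is not bounded on $L^A$. I would instead avoid inverting $\Lambda_{1-s}$ and use compactness of translations (Kolmogorov–Riesz type in $L^A$). For $|h|$ small, write $u_m(\cdot+h)-u_m=(\tau_h-I)\,I_s(\mathcal{R}\cdot\nabla^s u_m)$ by Proposition \ref{DsProps}(3). Split the kernel $I_s$ into a near part $\{|z|<\delta\}$ and a far part:
\begin{itemize}
\item The near part has $L^1$ norm $\lesssim\delta^s$ and is radially decreasing, so it contributes $\lesssim\delta^s\norm{\nabla^s u_m}_{L^A}$ uniformly in $m$.
\item The far part is smooth, with $\norm{\tau_h K-K}_{L^1}\lesssim |h|\delta^{s-1}$, up to the tail. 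The tail is harmless after restricting to $\Omega$, since there one only needs the truncated kernel on $B_{\mathrm{diam}\,\Omega+1}$, as in the proof of Theorem \ref{thm:poincare_inequality}.
\end{itemize}
To apply these $L^1$ kernel bounds in $L^A$, I will use the convolution estimate with bell-shaped majorants together with Lemma \ref{lema2} for $\mathcal{R}$. This yields $\sup_m\norm{\tau_h u_m-u_m}_{L^A(\Omega)}\to0$ as $h\to0$.

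\emph{Step 4 (conclusion).} Boundedness in $L^A$, equicontinuity of translations and uniform support in $\Omega$ give precompactness in $L^A(\Omega)$. I would justify this by a mollification argument: $\|u_m-u_m*\rho_\varepsilon\|$ is uniformly small, and $(u_m*\rho_\varepsilon)_m$ is precompact by Arzelà–Ascoli together with the embedding $L^\infty(\Omega)\hookrightarrow L^A(\Omega)$ given by (A0). Density then extends the conclusion from $C_c^\infty(\Omega)$ to all of $H^{s,A}_0(\Omega)$. With Step 3 in hand, Step 2 is only an alternative route, which I would keep as a sanity check.
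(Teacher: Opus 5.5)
Your skeleton is the paper's: write $u_m=I_s g_m$ with $g_m=\mathcal{R}\cdot\nabla^s u_m$, which is bounded in $L^A$ by Lemma \ref{lema2}. Then show $u_m$ is uniformly $O(\varepsilon^s)$-close to its mollification, and use compactness of the mollified family. The paper ends with $u_m\rightharpoonup 0$, pointwise convergence of $\eta_\varepsilon*u_m$ and dominated convergence. Your Arzel\`a--Ascoli ending with (A0) is an equally valid variant, and Step 2 can simply be dropped.

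\textbf{The gap is in Step 3.} For $x$-dependent $A$ the Luxemburg norm is not translation invariant, and $\tau_h$ need not even map $L^A$ into itself. This already fails for $L^{p(\cdot)}$ with smooth nonconstant $p$: a singularity $|x-x_0-h|^{-1/2+\epsilon}$ sitting where $p=2$ is moved by $\tau_h$ to a point where $p=3$.

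Your near-part bound only controls $\norm{N_\delta*g_m}_{L^A}\lesssim\delta^s\norm{g_m}_{L^A}$, where $N_\delta=I_s\chi_{\{|z|<\delta\}}$. But $(\tau_h-I)(N_\delta*g_m)$ also contains $\tau_h(N_\delta*g_m)=(\tau_hN_\delta)*g_m$. The shifted kernel $\tau_hN_\delta$ is singular at $-h$, so it has no integrable radially decreasing majorant centred at $0$. The bell-shaped convolution lemma therefore says nothing about this term, and $\sup_m\norm{\tau_hu_m-u_m}_{L^A(\Omega)}\to0$ is not established by what you cite. A translation-based Kolmogorov--Riesz criterion is the wrong target in a non-translation-invariant space.

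Your tail remark is also inaccurate, since $g_m$ is not supported near $\Omega$. It is unnecessary anyway: with a smooth cutoff, $\tau_hK-K$ decays like $|h||z|^{s-n-1}$, which is integrable at infinity because $s<1$.

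\textbf{The repair} is to bypass translations, since Step 4 only uses the mollification error. For a standard mollifier $\rho_\varepsilon$ one has $u_m-\rho_\varepsilon*u_m=(I_s-\rho_\varepsilon*I_s)*g_m$, and
\[
|I_s-\rho_\varepsilon*I_s|(z)\le C\min\{|z|^{s-n},\,\varepsilon|z|^{s-n-1}\}.
\]
To see this, use $\rho_\varepsilon*I_s\lesssim\varepsilon^{s-n}\lesssim|z|^{s-n}$ for $|z|<2\varepsilon$, and the mean value theorem for $|z|\ge 2\varepsilon$. The right-hand side is bell-shaped with $L^1$-norm $\lesssim\varepsilon^s$, again because $s<1$. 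So \cite[Lemma 4.4.6]{harjuleto2019} and Lemma \ref{lema2} give
\[
\norm{u_m-\rho_\varepsilon*u_m}_{L^A(\R^n)}\lesssim\varepsilon^s\norm{\nabla^s u_m}_{L^A}
\]
uniformly in $m$, after which your Step 4 goes through unchanged.

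The paper's own estimate of $\norm{u^\varepsilon_m-u_m}_{L^A}$ passes through $\norm{v_m(\cdot-\varepsilon z)}_{L^A}\le\norm{v_m}_{L^A}$, which is the same translation invariance. This majorant argument is therefore also what makes the paper's version rigorous in the generalized Orlicz setting.
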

\begin{proof}
    Let $u_m\rightharpoonup u$ in $H^{s,A}_0(\Omega)$. Without loss of generality we can assume that $u=0$. Since by definition $C^\infty_c(\Omega)$ is dense in $H^{s,A}_0(\Omega)$, extending to the whole space by zero, $u_m=I_s v_m$ where $v_m=(-\Delta)^{s/2} u_m$. Moreover, from the fractional fundamental theorem of calculus in Prposition \ref{DsProps}(3), and from the boundedness of the Riesz transform in $L^A(\R^d)$ by Lemma \ref{lema2}, the norm $\|v_m\|_{L^A(\R^d)}$ is comparable with the norm of $\|\nabla^s u_m\|_{L^A(\R^d)}$. Let us also define $u^\varepsilon_m=\eta_\varepsilon* u_m$ where $\eta\in C^{\infty}_c(B_1)$ and $\eta_\varepsilon(x)=\frac{1}{\varepsilon^n}\eta(\frac{x}{\varepsilon})$. We want to control the estimate
    \begin{equation*}
        \|u_m\|_{L^A(\Omega)}\leq \|u^\varepsilon_m-u_m\|_{L^A(\R^d)}+\|u^\varepsilon_m\|_{L^A(\R^d)}. 
    \end{equation*}
    
    Analogously to the proof of \cite[Theorem 2.2]{ShiehSpector2018},
    \begin{equation*}
        |u^\varepsilon_m(x)-u_m(x)|=\varepsilon^s\int_{B_1}{\int_{\R^d}{\eta(y)\left|\frac{1}{|z-y|^{d-s}}-\frac{1}{|z|^{d-s}}\right||v_m(x-\varepsilon z)|}\,dz}\,dy.
    \end{equation*}
    From the properties of the norm of $L^A(\R^d)$,
    \begin{align*}
        &\|u^\varepsilon_m(x)-u_m(x)\|_{L^A(\R^d)}\\
        &\leq\varepsilon^s \int_{B(0,1)}{\eta(y)\int_{\R^d}{\left|\frac{1}{|z-y|^{d-s}}-\frac{1}{|z|^{d-s}}\right|\|v_m(\cdot-\varepsilon z)\|_{L^A(\R^d)}}\,dz}\,dy\\
        &\leq\varepsilon^s \int_{B(0,1)}{\eta(y)\int_{\R^d}{\left|\frac{1}{|z-y|^{d-s}}-\frac{1}{|z|^{d-s}}\right|}\,dz}\,dy\|v_m\|_{L^A(\R^d)}.
    \end{align*}
   Since
    \begin{equation*}
        \sup_{y\in B(0,1)}{\int_{\R^d}{\left|\frac{1}{|z-y|^{d-s}}-\frac{1}{|z|^{d-s}}\right|}\,dz}<\infty,
    \end{equation*}
    there exists a positive constant $C>0$ independent of $\varepsilon$ and $m$ such that
    \begin{align*}
        &\|u^\varepsilon_m(x)-u_m(x)\|_{L^A(\R^d)}\\
        &\leq \varepsilon^s\left( \sup_{y\in B(0,1)}{\int_{\R^d}{\left|\frac{1}{|z-y|^{d-s}}-\frac{1}{|z|^{d-s}}\right|}\,dz}\right)\left(\int_{B(0,1)}{\eta(y)}\,dy\right)\|v_m\|_{L^A(\R^d)}\\
        &\leq C\varepsilon^s\|D^s u_m\|_{L^A(\R^d)}.
    \end{align*}

    On the other hand, since $u_m\rightharpoonup 0$ in $H^{s,A}_0(\Omega)$, for any fixed $\varepsilon>0$, we have that $u^\varepsilon_m(x)\to 0$ as $m\to \infty$. Moreover since $|\eta_\varepsilon|\leq C\varepsilon^{-d}$ and $\mathrm{supp}{\,u^\varepsilon_m}\subset \Omega_{-\varepsilon}=\{x\in \R^d:\, \mathrm{dist}(x,\Omega)\leq \varepsilon\}$, then by Young's and Poincaré's inequalities, as well as the uniform boundedness of $D^s u_m$ in $L^A(\R^d)$,
    \begin{multline*}
        |u^\varepsilon_m(x)|\leq 2\|u_m\|_{L^A(\Omega)}\|\eta_\varepsilon(x-\cdot)\|_{L^{A'}(\Omega)}\\
        \leq C(s,\Omega)\varepsilon^{-n} \|D^s u_m\|_{L^A(\Omega_1)} \|\chi_{\Omega_\varepsilon}\|_{L^{A'}(\Omega)}\leq C(s,\Omega)\varepsilon^{-n}.
    \end{multline*}
    From the dominated convergence theorem, this implies that $\|u^\varepsilon_m\|_{L^A(\Omega)}\to 0$ as $m\to \infty$, for any $s>0$ fixed. Consequently,
    \begin{align*}
        &\limsup_{m\to\infty}{\|u_m\|_{L^A(\Omega)}}
        =\lim_{\varepsilon}{\limsup_{m\to\infty}{\|u_m\|_{L^A(\Omega)}}}\\
        &\qquad\quad\leq\lim_{\varepsilon}{\left( \varepsilon^s\limsup_{m\to\infty}{\|D^su_m\|_{L^A(\R^d)}}+\lim_{m\to\infty}{\|u^\varepsilon_m\|_{L^A(\Omega_1)}}\right)}=C\lim_{\varepsilon\to 0}{\varepsilon^s}=0. 
    \end{align*}
\end{proof}

\section{Interpolation of fractional generalized Orlicz-Sobolev spaces}
In this section, we will study the spaces $H^{s,A}_0(\Omega)$ as complex interpolation spaces, obtaining anaologous results as the ones in the previous section by means of elemental properties of the complex method.
In order to establish that the spaces $H^{s,A}(\R^n)$ are of complex interpolation, we need the following technical lemma.
\begin{Lema}\label{PedroLema} Let $A\in \Phi(\R^n)$ satisfying $(A0), (A1), (A2), (Inc)_p, (Dec)_q$, $u\in L^A(\R^n)$ and $t\in \R$. Then, there exists a positive constant $C$ not depending on $t$ and $u$ such that  
    $$\|\Lambda_{it} f\|_{L^A}\leq C\left\langle t\right\rangle^{n}\|f\|_{L^A}.$$
\end{Lema}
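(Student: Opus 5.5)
We prove the bound by viewing $\Lambda_{it}$ as a Fourier multiplier and reducing it to Theorem \ref{Orliczmultiplier}, while tracking how the constant depends on $t$. By definition, $\Lambda_{it}f=\mathfrak{F}^{-1}(m_t\widehat f)$ with symbol
$$m_t(\xi)=\langle\xi\rangle^{-it}=\exp\!\Big(-\tfrac{it}{2}\log(1+4\pi^2|\xi|^2)\Big).$$
This is smooth on all of $\R^n$ and satisfies $|m_t|=1$.

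\textbf{Step 1: symbol estimates.} We establish the Mikhlin-type bounds
$$|\partial^\alpha_\xi m_t(\xi)|\le C_\alpha\langle t\rangle^{|\alpha|}|\xi|^{-|\alpha|},\qquad |\alpha|\le n,$$
with $C_\alpha$ independent of $t$. Set $\phi(\xi)=\tfrac12\log(1+4\pi^2|\xi|^2)$. Each derivative of $\phi$ obeys $|\partial^\beta\phi(\xi)|\lesssim\langle\xi\rangle^{-|\beta|}\le |\xi|^{-|\beta|}$ for $|\beta|\ge1$. Applying the Fa\`a di Bruno formula to $e^{-it\phi}$, $\partial^\alpha m_t$ is $m_t$ times a sum of products $(-it)^k\partial^{\beta_1}\phi\cdots\partial^{\beta_k}\phi$ with $\beta_1+\dots+\beta_k=\alpha$ and $k\le|\alpha|$. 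Each such product is bounded by $|t|^k|\xi|^{-|\alpha|}\le\langle t\rangle^{|\alpha|}|\xi|^{-|\alpha|}$. In particular $\langle t\rangle^{-n}m_t$ satisfies the hypotheses of Theorem \ref{Orliczmultiplier} with a constant $C$ independent of $t$.

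\textbf{Step 2: uniform operator bound.} Applying Theorem \ref{Orliczmultiplier} to the family $\langle t\rangle^{-n}m_t$ only shows that each operator is bounded. The lemma needs the operator norm to be controlled by the Mikhlin constant alone, which is the main point to check. We trace this through the route the paper used. By Lemma \ref{lema1} (\cite[Proposition 4.27]{abels2012}), the Calder\'on--Zygmund kernel bounds $|\partial^\alpha k(z)|\le C|z|^{-n-|\alpha|}$, $|\alpha|\le1$, hold with $C$ depending linearly on the Mikhlin constant $\max_{|\alpha|\le n}\sup|\xi|^{|\alpha|}|\partial^\alpha m|$. The same is true of the $L^2$ norm, which is $\sup|m|$. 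The Calder\'on--Zygmund theory on $L^A$ in Lemma \ref{lema2} (\cite[Corollary 5.4.3]{harjuleto2019}, via extrapolation from weighted $L^p$ or good-$\lambda$ estimates) gives an operator norm controlled by a constant depending on $A$ times the kernel constants and the $L^2$ bound of $T$. Combining these gives
$$\|\Lambda_{it}f\|_{L^A}=\langle t\rangle^{n}\,\big\|T_{\langle t\rangle^{-n}m_t}f\big\|_{L^A}\le C\langle t\rangle^{n}\|f\|_{L^A}$$
for $f\in C_c^\infty(\R^n)$, with $C$ independent of $t$.

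\textbf{Step 3: extension.} The estimate extends to all $f\in L^A(\R^n)$ by density of $C_c^\infty$ in $L^A$, which holds under $(A0)$ and $(Dec)_q$.

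The main obstacle is Step 2, namely the quantitative dependence of the $L^A$ bound on the Mikhlin constant, since Theorem \ref{Orliczmultiplier} as stated is only qualitative. If tracking the constants through \cite{harjuleto2019} proves awkward, an alternative is available. The weighted H\"ormander--Mihlin theorem gives $\|T_m\|_{L^p_w}\le C([w]_{A_p})\,\|m\|_{\text{Mikhlin}}$, which is linear in the Mikhlin constant. We could then use extrapolation to $L^A$ under $(A0)$--$(A2)$; extrapolation preserves this linear dependence, and the result follows.
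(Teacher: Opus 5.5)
Your proof is correct and is essentially the paper's argument: a Fa\`a di Bruno computation giving $|\partial^\alpha_\xi\langle\xi\rangle^{-it}|\lesssim\langle t\rangle^{|\alpha|}|\xi|^{-|\alpha|}$, followed by Theorem \ref{Orliczmultiplier}; the only difference is that you apply Fa\`a di Bruno once to $e^{-it\phi}$, while the paper applies it twice to $f\circ g\circ h$. Your Step 2 makes explicit the uniformity in $t$ that the paper leaves implicit, but tracing constants through Lemma \ref{lema1} actually uses $n+2$ derivatives of the symbol and so would only give $\langle t\rangle^{n+2}$. The exponent $n$ is better obtained from your weighted Mikhlin/extrapolation alternative, or from the closed graph theorem applied to the linear map $m\mapsto T_m$, taking Theorem \ref{Orliczmultiplier} as stated.
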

\begin{proof}
    This is a straightforward consequence of Theorem \ref{Orliczmultiplier}, so we only have to prove that $m(\xi)=(1+4\pi^2|\xi|^2)^{-it}$ is on the hypothesis of Theorem \ref{Milhinnormal}.\\
    Let us define the functions $f(r)=r^{-it/2}$, $g(r)=1+4\pi^2r^2$ and $h(\xi)=|\xi|$, hence $m=f\circ g \circ h$. To obtain the estimates for the derivatives of $m$ we apply Faà di Bruno's formula twice (see \cite{Hardy2006}). Let us analyze the derivatives of $w=f\circ g$. For each $k\in\N$ define $\Pi_k=\{(\beta_1,...,\beta_n)\in\N^n:\, \sum_{i=1}^n{i\cdot \beta_i}=k\}$.
    \begin{align*}
        w^{(k)}(r)&=\sum_{\beta\in\Pi_k}{\frac{k!}{\beta!}f^{(|\beta|)}(r)\prod_{\ell=1}^k{\left(\frac{g^{(\ell)}(r)}{\ell!}\right)^{\beta_\ell}}}\\
        &=\sum_{\beta\in\Pi_k}{\frac{k!}{\beta!}\left(\prod_{j=0}^{|\beta|-1}\left(\frac{-it}{2}-j\right)\right)(1+r^2)^{-\frac{it}{2}-|\beta|}\prod_{\ell=1}^{k}{\left(\frac{(1+r^2)^{(\ell)}}{\ell!}\right)^{\beta_\ell}}}
    \end{align*}
    Since $g^{(\ell)}(r)=0$ for all $\ell\geq 3$, then we only care for multiindices $\beta\in\Pi'_k\subset \Pi_k$ where $\Pi'_k=\{(\beta_1, \beta_2, 0, ...,0)\in\N^n:\, \beta_1+2\beta_2=k\}$. Hence,
    \begin{align*}
        w^{(k)}(r)
        &=\sum_{\beta\in\Pi'_k}{\frac{k!}{\beta!}\left(\prod_{j=0}^{|\beta|-1}\left(\frac{-it}{2}-j\right)\right)(1+r^2)^{-\frac{it}{2}-|\beta|}\prod_{\ell=1}^{2}{\left(\frac{(1+r^2)^{(\ell)}}{\ell!}\right)^{\beta_\ell}}}\\
        &=\sum_{\beta\in\Pi'_k}{\frac{k!}{\beta!}\left(\prod_{j=0}^{|\beta|-1}\left(\frac{-it}{2}-j\right)\right)(1+r^2)^{-\frac{it}{2}-|\beta|}2r^{\beta_1}}\\
        &=\sum_{\beta\in\Pi'_k}{\frac{k!}{\beta!}\left(\prod_{j=0}^{|\beta|-1}\left(\frac{-it}{2}-j\right)\right)(1+r^2)^{-\frac{it}{2}-|\beta|}2r^{2|\beta|-n}}.
    \end{align*}
    Moreover, since
    \begin{equation*}
        \left|-\frac{it}{2}-j\right|=\left(\frac{j^2+\frac{t^2}{4}}{1+t^2}\right)^{1/2}(1+t^2)^{1/2}\leq C\left\langle t\right\rangle
    \end{equation*}
    for some $C>0$, we conclude that
    \begin{align*}
        |w^{(k)}(r)|\leq C_{k}\sum_{\beta\in\Pi'_k}{\left\langle t\right\rangle^n\left\langle r\right\rangle^{-2|\beta|}\left\langle r\right\rangle^{2|\beta|-n}}\leq C\left\langle t\right\rangle^n\left\langle r\right\rangle^{-n}.
    \end{align*}
    Finally,
    \begin{align*}
        |\partial^\alpha m(\xi)|&=|
        \partial^\alpha(w\circ h)(\xi)|\leq \sum_{\pi\in\Pi_\alpha}{|w^{(|\pi|)}(|\xi|))|\prod_{\beta\in\pi}{|\partial^\beta g(\xi)|}}\\
        &\leq C\sum_{\pi\in\Pi_\alpha}{\left\langle t\right\rangle^{|\alpha|}\left\langle \xi\right\rangle^{-|\pi|}|\xi|^{|\pi|-|\alpha|}}\leq C\left\langle t\right\rangle^{|\alpha|}|\xi|^{-|\alpha|}.
    \end{align*}
\end{proof}
As a direct consequence of Lemma \ref{PedroLema} and Theorem \ref{Orliczmultiplier}, we get the interpolation identity:
\begin{Teor}\label{Interpolation}
    Let $\theta\in (0,1)$. Then, $$H^{\theta,A}(\R^n)=[L^A(\R^n),W^{1,A}(\R^n)]_\theta.$$
\end{Teor}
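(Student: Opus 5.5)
We argue as in the classical proof of $H^{\theta,p}=[L^p,H^{1,p}]_\theta$ via the analytic family $\Lambda_z$, with $A$ assumed to satisfy $(A0),(A1),(A2),(Inc)_p,(Dec)_q$ as in Lemma \ref{PedroLema}. The first step is to identify $W^{1,A}(\R^n)$ with $H^{1,A}(\R^n)$, with equivalent norms. For $u\in\mathcal S(\R^n)$ the multipliers $\langle\xi\rangle^{-1}$ and $2\pi i\xi_j\langle\xi\rangle^{-1}$ satisfy the hypotheses of Theorem \ref{Orliczmultiplier}. Hence
$$\norm{u}_{L^A}+\norm{\nabla u}_{L^A}\lesssim\norm{\Lambda_{-1}u}_{L^A}.$$
Conversely, write $\Lambda_{-1}u=\Lambda_1 u+\sum_j \mathcal R_j\Lambda_1\partial_j u$ up to constants, using $\langle\xi\rangle^2=1+4\pi^2|\xi|^2$. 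This gives $\norm{\Lambda_{-1}u}_{L^A}\lesssim \norm{u}_{W^{1,A}}$ by Lemma \ref{lema2} and the boundedness of $\Lambda_1$ (the Bessel kernel argument). Both inequalities then extend by density of $C_c^\infty$ in $W^{1,A}$ (valid under $(A0)$--$(A2)$ and doubling) and in $L^A$. It therefore suffices to prove $[L^A,H^{1,A}]_\theta=H^{\theta,A}$.

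Second, we prove $H^{\theta,A}\hookrightarrow[L^A,H^{1,A}]_\theta$. Let $u\in H^{\theta,A}$, so $v=\Lambda_{-\theta}u\in L^A$; first take $v\in\mathcal S$. Define
$$F(z)=e^{(z-\theta)^2}\Lambda_{z}v .$$
On the line $\operatorname{Re}z=0$, Lemma \ref{PedroLema} gives $\norm{F(it)}_{L^A}\lesssim e^{\theta^2-t^2}\langle t\rangle^n\norm{v}_{L^A}$. On $\operatorname{Re}z=1$ we have $\Lambda_{-1}F(1+it)=e^{(1+it-\theta)^2}\Lambda_{it}v$, which yields the same bound in the $H^{1,A}$ norm. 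The Gaussian factor dominates $\langle t\rangle^n$, so $F$ decays at $\pm i\infty$. Holomorphy of $F$ and continuity into $L^A+H^{1,A}$ hold for Schwartz $v$; one checks them on the Fourier side and uses the uniform bounds. Since $F(\theta)=u$, we get $\norm{u}_\theta\lesssim\norm{u}_{H^{\theta,A}}$. We extend to all $u$ by density of $\Lambda_\theta\mathcal S$ in $H^{\theta,A}$.

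Third, we prove the reverse embedding. Take $F\in\mathfrak F(L^A,H^{1,A})$ with $F(\theta)=u$, and set $G(z)=e^{(z-\theta)^2}\Lambda_{-z}F(z)$. On the boundary lines, Lemma \ref{PedroLema} bounds $\norm{G(j+it)}_{L^A}\lesssim\norm{F}_{\mathfrak F}$ for $j=0,1$; the Gaussian again absorbs $\langle t\rangle^n$. Then $\Lambda_{-\theta}u=G(\theta)\in[L^A,L^A]_\theta=L^A$ by Theorem \ref{ComplexProps}(3), with $\norm{\Lambda_{-\theta}u}_{L^A}\lesssim\norm{F}_{\mathfrak F}$. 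Taking the infimum over $F$ gives $\norm{u}_{H^{\theta,A}}\lesssim\norm{u}_\theta$.

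The main obstacle is the third step. For a general $F$, $\Lambda_{-z}F(z)$ is only a tempered distribution in the interior of the strip. One has to justify that $G$ is admissible, that is, holomorphic and bounded with values in $L^A+L^A$. I would first handle this on the dense subclass of $F$ of the form $\sum_k e^{\lambda z^2+\mu_k z}\varphi_k$ with $\varphi_k\in\mathcal S$, where everything is explicit. Alternatively, one can use the three-lines lemma applied to the pairings $\langle G(z),g\rangle$ with $g\in\mathcal S$, together with the duality $(L^A)^*=L^{A^*}$ and density of $\mathcal S$ in $L^{A^*}$. Density of such functions in $\mathfrak F$ is the standard Bergh–Löfström argument, and it applies here because $\mathcal S$ is dense in both $L^A$ and $H^{1,A}$.
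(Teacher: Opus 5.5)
Your argument is correct and follows essentially the paper's route. The paper simply invokes Lemma \ref{PedroLema} (polynomially bounded imaginary powers $\Lambda_{it}$ on $L^A$) and defers to the Lebesgue-case argument and Schumacher's imaginary-powers interpolation theorem, whose proof is exactly your analytic families $e^{(z-\theta)^2}\Lambda_{\pm z}$. Your explicit identification $W^{1,A}(\R^n)=H^{1,A}(\R^n)$ also supplies a step the paper leaves implicit.

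One small slip in that identification: $\sum_j\mathcal R_j\Lambda_1\partial_j$ has symbol $2\pi|\xi|\langle\xi\rangle^{-1}$, which is of order $0$, so it cannot produce $\Lambda_{-1}u$. The correct identity is $\Lambda_{-1}u=\Lambda_1u-\sum_j(\partial_j\Lambda_1)\partial_j u$, where $\partial_j\Lambda_1$ is the same Mikhlin multiplier $2\pi i\xi_j\langle\xi\rangle^{-1}$ you already used.
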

\noindent{}\textbf{Proof:} The proof follows directly from Lemma \ref{PedroLema} with an analogous reasoning as the Lebesgue case in \cite[Theorem 3.7]{BellidoGarcia2025}. See also \cite[Theorem 2.2]{Schumacher} and eht references therein. \qed 

Since we are assuming that $A$ is a $\Phi$-function satisfying the unweighted condition $(A0)$, the weighted fractional Sobolev spaces $H^{s,p}_w$, which corresponds with  $A(x,t)=w(x)t^{p}$, $w\in A_p$, from \cite{GarciaSaez2026}, are not under our scop. We leave for a subsequent work how to deal with weighted Orlicz spaces in the fractional setting.

In an analogous manner, we can establish that for $\Omega$ with Lipschitz boundary, $$\Lambda^{s,p}_0(\Omega)=[L^A(\Omega),W_0^{1,A}(\Omega)]_s=H^{s,A}_0(\Omega),$$ where functions in $H^{s,A}_0(\Omega)$ must be understood as functions in $H^{s,A}_0(\R^n)$ such that $u=0$ in $\Omega^c$. Moreover, if we define $$H^{s,A}(\Omega):=[L^A(\Omega),W^{1,p}(\Omega)],$$ we can argue as in \cite[Theorem 2.18]{BellidoCuetoGarcia2025} using the extension operator for generalized Orlicz-Sobolev spaces given in \cite{Juusti}. In this work, it is proven that if $\Omega$ is a Lipchistz domain (indeed, weaker assumptions on the domain are considered) and $A\in \Phi(\Omega)$ satisfies $(A0)-(A2)$ and $(Dec)_q$ for some $q<\infty$, there exists an extension operator $E:W^{1,A}(\Omega)\to W^{1,\tilde{A}}(\R^n)$, where $\tilde{A}\in \Phi(\R^n)$ satisfies $(A0)-(A2)$ and $(Dec)_q$, and $\tilde{A}|_\Omega=A$. Hence, we get that $$H^{s,A}(\Omega)=H^{s,\tilde{A}}(\R^n)/\sim,$$ where $u\sim v$ if and only if $u|_\Omega=v|_\Omega$; and the interpolation norm being equivalent to the norm $$\norm{u}_{H^{s,A}(\Omega)}=\inf\Big\{\norm{v}_{H^{s,\tilde{A}}(\R^n):}v|_\Omega=u\Big\}.$$

Now that we have established the complex interpolation nature of the fractional generalized Orlicz-Sobolev spaces, we can exploit the properties of interpolation in order to easily establish many results that were obtained in \cite{campos2024} by means of the fractional gradient.  In particular, from \ref{ComplexProps}, the following result is straightforward:
\begin{Prop}
    Let $s\in (0,1)$ and $A\in \Phi(\R^n)$ satisfying $(A0), (A1), (A2), (Inc)_p$ and $(Dec)_q$. The following hold:
     \begin{enumerate}
        \item $H^{s,A}(\R^n),H_0^{s,A}(\Omega)$ and $H^{s,A}(\Omega)$ are complete, reflexive and separable spaces.
        \item $H^{s,A}(\R^n)\xhookrightarrow{}H^{t,A}(\R^n)$, $H^{s,A}_0(\Omega)\xhookrightarrow{}H^{t,A}_0(\Omega)$ and $H^{s,A}(\Omega)\xhookrightarrow{}H^{t,A}(\Omega)$, for every $0<t<s$.
        \item $W^{1,A}_0(\Omega)$ is dense in $H^{s,A}_0(\Omega)$. The space $C_c^\infty(\Omega)$ is dense in $H^{s,A}_0(\Omega)$.
    \end{enumerate}
\end{Prop}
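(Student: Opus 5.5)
The plan is to read every item off Theorem \ref{ComplexProps}, using the interpolation identities just established:
\[
H^{s,A}(\R^n)=[L^A(\R^n),W^{1,A}(\R^n)]_s,\qquad H^{s,A}_0(\Omega)=[L^A(\Omega),W^{1,A}_0(\Omega)]_s,
\]
and $H^{s,A}(\Omega)=[L^A(\Omega),W^{1,A}(\Omega)]_s$. Each of these is an interpolation space of a compatible couple $(E_0,E_1)$ with $E_1\xhookrightarrow{}E_0$. Consequently $E_0\cap E_1=E_1$, and $E_1$ is dense in the interpolated space by Theorem \ref{ComplexProps}(4). The standing hypotheses $(A0)$, $(Inc)_p$, $(Dec)_q$ give that $L^A$, $W^{1,A}$ and $W^{1,A}_0$ are reflexive and separable Banach spaces, as recalled in the preliminaries.

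\emph{Item 1.} Completeness holds because $[E_0,E_1]_s$ is always a Banach space. Reflexivity follows from Theorem \ref{ComplexProps}(7), since $L^A$ is reflexive. For separability, take a countable dense subset of $E_1$. Its image is dense in $E_1$ for the $E_1$-norm. This norm dominates the interpolation norm, since $\|x\|_s\le C\|x\|_{E_1}$ by the constant function $f\equiv x$ together with a factor $e^{(z-s)^2}$ that forces decay. Combined with the density of $E_1$ (Theorem \ref{ComplexProps}(4)), this gives a countable dense set in $[E_0,E_1]_s$. For $H^{s,A}(\Omega)$ one can alternatively use the quotient description $H^{s,\tilde A}(\R^n)/\!\sim$, since quotients of reflexive separable spaces are again reflexive and separable.

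\emph{Item 2.} Here I write $E_\theta:=[E_0,E_1]_\theta$. Given $0<t<s<1$, the reiteration theorem, Theorem \ref{ComplexProps}(8), with $s_0=0$ (or small) and $s_1=s$, identifies $H^{t,A}$ with $[E_0,E_s]_{t/s}$. Since $E_s\xhookrightarrow{}E_0$, Theorem \ref{ComplexProps}(2) then gives $E_s=[E_0,E_s]_1\xhookrightarrow{}[E_0,E_s]_{t/s}$ (alternatively use $[E_0,E_1]_{s}\xhookrightarrow{}[E_0,E_1]_{t}$ directly from item 2 of Theorem \ref{ComplexProps}). Reiteration as stated requires $s_0,s_1\in(0,1)$. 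I would handle the endpoint either by applying item 2 directly, or by picking $0<r<t$ and reiterating between $E_r$ and $E_s$. The case of $H^{s,A}_0(\Omega)$ and $H^{s,A}(\Omega)$ is identical.

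\emph{Item 3.} Density of $W^{1,A}_0(\Omega)=E_0\cap E_1$ in $H^{s,A}_0(\Omega)$ is exactly Theorem \ref{ComplexProps}(4). For $C_c^\infty(\Omega)$, recall that $W^{1,A}_0(\Omega)$ is by definition the $W^{1,A}$-closure of $C_c^\infty(\Omega)$. The continuous inclusion $W^{1,A}_0(\Omega)\xhookrightarrow{}H^{s,A}_0(\Omega)$ (via $E_0\cap E_1\xhookrightarrow{}E_s$) then carries $W^{1,A}$-approximations to $H^{s,A}_0$-approximations, and transitivity of density concludes. This is also consistent with the original definition of $H^{s,A}_0(\Omega)$ as the closure of $C_c^\infty(\Omega)$.

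The main obstacle is not the abstract properties but making sure the identifications hold with equivalent norms in each setting. For $\Omega$ this means the Lipschitz-domain interpolation identity and the extension operator of \cite{Juusti}, which I take as given from the discussion above. A smaller technical point is the endpoint issue in reiteration, which I will avoid by using item 2 of Theorem \ref{ComplexProps} directly.
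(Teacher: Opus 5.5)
Your proposal is correct and follows the paper's route: the paper offers no separate argument and just calls the proposition a straightforward consequence of Theorem~\ref{ComplexProps} applied to the identities $H^{s,A}=[L^A,W^{1,A}]_s$, $H^{s,A}_0(\Omega)=[L^A(\Omega),W^{1,A}_0(\Omega)]_s$ and $H^{s,A}(\Omega)=[L^A(\Omega),W^{1,A}(\Omega)]_s$. Your explicit separability argument (density of $E_1=E_0\cap E_1$ together with $\|x\|_s\lesssim\|x\|_{E_1}$), your use of item~2 of Theorem~\ref{ComplexProps} instead of reiteration for the embeddings, and the transitivity argument for $C_c^\infty(\Omega)$ fill in details the paper leaves implicit.
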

From now on, we will assume that $A\in \Phi(\R^n)$ satisfies $(A0)-(A2)$ and $(Inc)_p$ and $(Dec)_q$, for some $1<p,q<\infty$.

Observe that the embedding $$H^{s,A}_0(\Omega)\xhookrightarrow{}H^{t,A}_0(\Omega),$$ for $s>t$ is, indeed, compact, as it happened for Bessel potential spaces \cite[Proposition 3.1]{BellidoCuetoGarcia2025}.
\begin{Prop}
    Let $0<t<s<1$. Then, we have the compact embedding $$H^{s,A}_0(\Omega)\xhookrightarrow{}\xhookrightarrow{}H^{t,A}_0(\Omega).$$
\end{Prop}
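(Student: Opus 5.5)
We plan to argue exactly as for Bessel potential spaces, using the interpolation structure from Theorem \ref{Interpolation} (in its bounded-domain version $H^{s,A}_0(\Omega)=[L^A(\Omega),W^{1,A}_0(\Omega)]_s$) together with the compactness transfer of Theorem \ref{ComplexProps}(9).

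\textbf{Step 1: write $H^{t,A}_0(\Omega)$ as an interpolation space between $L^A(\Omega)$ and $H^{s,A}_0(\Omega)$.} By the reiteration property, Theorem \ref{ComplexProps}(8), and the endpoint conventions of items 3 and 5,
$$\left[L^A(\Omega),H^{s,A}_0(\Omega)\right]_{t/s}=\left[[L^A,W^{1,A}_0]_0,[L^A,W^{1,A}_0]_s\right]_{t/s}=[L^A(\Omega),W^{1,A}_0(\Omega)]_{t}=H^{t,A}_0(\Omega).$$
Here the endpoint $[E_0,E_1]_0=E_0$ requires some care. Item 8 is stated for $s_0\in(0,1)$, so if needed we instead take $s_0=\varepsilon$ small and use the chain $L^A\leftarrow H^{\varepsilon,A}_0$. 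That route only changes the argument below by replacing $L^A$ with $H^{\varepsilon,A}_0$ on the target side.

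\textbf{Step 2: set up the identity operator for Theorem \ref{ComplexProps}(9).} Let $I$ be the identity, with couple $(E_0,E_1)=(H^{s,A}_0(\Omega),H^{s,A}_0(\Omega))$ and couple $(F_0,F_1)=(H^{s,A}_0(\Omega),L^A(\Omega))$. Then:
\begin{itemize}
\item $I:E_0\to F_0$ is bounded, being the identity;
\item $I:E_1\to F_1$ is compact by the fractional Rellich--Kondrachov theorem, $H^{s,A}_0(\Omega)\xhookrightarrow{}\xhookrightarrow{}L^A(\Omega)$.
\end{itemize}
Since $E_0=E_1$, the hypothesis of Theorem \ref{ComplexProps}(9) holds with no UMD requirement. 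Alternatively, $E_1$ is UMD by Theorem \ref{UMD}, because complex interpolation of UMD spaces is UMD. Hence $I$ is compact from $[E_0,E_1]_\lambda=H^{s,A}_0(\Omega)$ (Theorem \ref{ComplexProps}(3)) into $[F_0,F_1]_\lambda$, with $\lambda=1-t/s\in(0,1)$.

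\textbf{Step 3: identify the target space.} By Theorem \ref{ComplexProps}(1) and Step 1,
$$[F_0,F_1]_\lambda=[L^A(\Omega),H^{s,A}_0(\Omega)]_{t/s}=H^{t,A}_0(\Omega),$$
which gives the compact embedding.

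\textbf{Main obstacle.} The delicate point is the reiteration identity at the endpoint $L^A$, because Theorem \ref{ComplexProps}(8) is stated only for interior parameters. The general complex reiteration theorem (Cwikel) does allow endpoints when $E_0\cap E_1$ is dense, and here $W^{1,A}_0$ is dense in $L^A$. If we prefer to avoid it, the fallback is the $\varepsilon$-route from Step 1: prove compactness of $H^{s,A}_0\hookrightarrow H^{\varepsilon,A}_0$ directly for small $\varepsilon$, then interpolate between $H^{\varepsilon,A}_0$ and $H^{s,A}_0$ using only interior parameters.

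A secondary check is that the bounded-domain interpolation identity is consistent with the closure definition of $H^{s,A}_0(\Omega)$. This follows from the density statement in the preceding proposition.
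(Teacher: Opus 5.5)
Your argument is correct, but it follows a different route from the paper's.

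\textbf{The paper's route.} The paper's only compactness input is the local embedding $W^{1,A}_0(\Omega)\hookrightarrow\hookrightarrow L^A(\Omega)$ of Proposition \ref{CompactEmbedding}. It then quotes \cite[Corollary 2.4]{BellidoCuetoGarcia2025}: a compact embedding $E_1\hookrightarrow\hookrightarrow E_0$ makes $[E_0,E_1]_s\hookrightarrow[E_0,E_1]_t$ compact for $t<s$.

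\textbf{Your route.} You start from the fractional Rellich--Kondrachov theorem $H^{s,A}_0(\Omega)\hookrightarrow\hookrightarrow L^A(\Omega)$ recalled earlier. You rewrite $H^{t,A}_0(\Omega)=[L^A(\Omega),H^{s,A}_0(\Omega)]_{t/s}$ by reiteration, and transfer compactness with the domain couple frozen. That case of Theorem \ref{ComplexProps}(9) is elementary and needs no UMD. On $F_0\cap F_1=H^{s,A}_0(\Omega)$ one has $\|u\|_{[F_0,F_1]_\lambda}\le \|u\|_{F_0}^{1-\lambda}\|u\|_{F_1}^{\lambda}$. Hence a sequence bounded in $H^{s,A}_0$ and Cauchy in $L^A$ is Cauchy in $H^{t,A}_0$. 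In effect you prove the cited corollary by hand.

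\textbf{What each buys.} The paper's route does not depend on the fractional Rellich--Kondrachov theorem, which was proved with fractional-gradient estimates rather than interpolation. This fits the section's aim of deducing everything from the local theory. You can recover that independence: $H^{s,A}_0\hookrightarrow\hookrightarrow L^A$ itself follows from Theorem \ref{ComplexProps}(9) with $F_0=F_1=L^A(\Omega)$, applied to $W^{1,A}_0\hookrightarrow\hookrightarrow L^A$. Your route is more transparent and self-contained apart from reiteration.

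\textbf{The endpoint $\theta_0=0$.} The reiteration theorem of \cite{BerghLofstrom1976} applies, since its density hypotheses hold: $W^{1,A}_0(\Omega)$ is dense in $L^A(\Omega)$ and in $H^{s,A}_0(\Omega)$. Items 4 and 5 then give $[L^A,W^{1,A}_0]_0=L^A$ isometrically. So your $\varepsilon$-fallback is unnecessary. As you phrase it, it would also be circular: compactness of $H^{s,A}_0\hookrightarrow H^{\varepsilon,A}_0$ is the proposition itself with $t=\varepsilon$.
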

\noindent{}\textbf{Proof:} By Proposition \ref{CompactEmbedding}, $W^{1,A}_0(\Omega)\xhookrightarrow{}\xhookrightarrow{}L^A(\Omega)$. Hence, by \cite[Corollary 2.4]{BellidoCuetoGarcia2025}, $$H^{s,A}_0(\Omega)=[L^A(\Omega),W^{1,A}_0(\Omega)]_s\xhookrightarrow{}\xhookrightarrow{}[L^A(\Omega),W^{1,A}_0(\Omega)]_t=H^{t,A}_0(\Omega),$$ as we wanted to prove. \qed

The dual equivalence of Theorem \ref{Interpolation} can be easily obtained as well. First of all, we will denote by $W^{-1,A^*}(\Omega)$ to the dual of $W^{1,A}_0(\Omega)$. This space can be easily characterized by Proposition \ref{PoincareLocal}. In particular, for any $U\in W^{-1,A^*}(\Omega)$, there exists $\textbf{u}\in L^{A^*}(\Omega;\R^n)$ such that $$\langle U,u\rangle=\int_{\Omega}\textbf{u}\cdot \nabla u\,dx,\,\forall u\in W^{1,A}_0(\Omega).$$ The dual of $H_0^{s,A}(\Omega)$ is denoted by $H^{-s,A^*}(\Omega)$, and by the same means, it was characterized in \cite[Remark 5]{campos2024} as the space $F\in H^{-s,A^*}(\Omega)$ such that there exists $\textbf{f}\in L^{A^*}(\R^n;\R^n)$, with $$\langle F,g\rangle=\int_{\R^n}\textbf{f}\cdot \nabla^s g\,dx,\,\forall g\in H^{s,A}_0(\Omega).$$
\begin{Prop}
    The following equality holds with equivalence of norms:
    $$H^{-s,A^*}(\Omega)=\left(H^{s,A}_0(\Omega)\right)^*=[L^{A^*}(\Omega),W^{-1,A^*}(\Omega)]_s.$$
\end{Prop}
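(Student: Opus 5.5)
We would obtain the statement from the duality theorem for the complex method, Theorem \ref{ComplexProps}(6), applied to the couple $(L^A(\Omega),W^{1,A}_0(\Omega))$. The first equality $H^{-s,A^*}(\Omega)=(H^{s,A}_0(\Omega))^*$ is the definition of the left-hand side, with the characterization recalled from \cite[Remark 5]{campos2024}. Only the second equality needs proof.

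\textbf{Step 1.} We use the identification $H^{s,A}_0(\Omega)=[L^A(\Omega),W^{1,A}_0(\Omega)]_s$, established after Theorem \ref{Interpolation} for Lipschitz $\Omega$.

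\textbf{Step 2.} We check that the couple is regular and that one of its spaces is reflexive. Since $W^{1,A}_0(\Omega)\hookrightarrow L^A(\Omega)$, the intersection is $W^{1,A}_0(\Omega)$ itself. This intersection is dense in $W^{1,A}_0(\Omega)$ trivially. It is also dense in $L^A(\Omega)$, because $C_c^\infty(\Omega)\subset W^{1,A}_0(\Omega)$ and $C_c^\infty(\Omega)$ is dense in $L^A(\Omega)$ when $A$ satisfies $(A0)$ and is doubling (\cite[Theorem 4.5]{harjuleto2016}). Reflexivity of $L^A(\Omega)$, and of $W^{1,A}_0(\Omega)$, holds under $(A0)$, $(Inc)_p$ and $(Dec)_q$.

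\textbf{Step 3.} Theorem \ref{ComplexProps}(6) then gives
$$\big([L^A(\Omega),W^{1,A}_0(\Omega)]_s\big)^*=[(L^A(\Omega))^*,(W^{1,A}_0(\Omega))^*]_s .$$
We identify $(L^A(\Omega))^*=L^{A^*}(\Omega)$ by \cite[Theorem 2.7.14]{Diening}, since $A$ satisfies $(A0)$ and is doubling. We identify $(W^{1,A}_0(\Omega))^*=W^{-1,A^*}(\Omega)$ by definition. The norm equivalences are inherited from these identifications.

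\textbf{Main obstacle.} The delicate point is making the dual couple $(L^{A^*}(\Omega),W^{-1,A^*}(\Omega))$ a genuine compatible couple with the right embedding. Theorem \ref{ComplexProps}(6) presupposes that $E_0^*$ and $E_1^*$ sit inside a common space, namely $(E_0\cap E_1)^*$. Here $E_0\cap E_1=W^{1,A}_0(\Omega)$ is dense in both $E_0$ and $E_1$, so restriction gives an injective map $(L^A)^*\to (W^{1,A}_0)^*$. Under it, $g\in L^{A^*}(\Omega)$ acts as $u\mapsto\int_\Omega gu\,dx$.

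We would verify that this embedding is continuous, $L^{A^*}(\Omega)\hookrightarrow W^{-1,A^*}(\Omega)$. This follows from Hölder's inequality together with the Poincaré inequality, Proposition \ref{PoincareLocal}:
$$\Big|\int_\Omega gu\Big|\le 2\|g\|_{L^{A^*}}\|u\|_{L^A}\le C\|g\|_{L^{A^*}}\|\nabla u\|_{L^A}.$$
We would then check that this embedding is consistent with the vector-field representation $\langle U,u\rangle=\int_\Omega\mathbf{u}\cdot\nabla u$ of $W^{-1,A^*}(\Omega)$.

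Once the compatibility is settled, Theorem \ref{ComplexProps}(6) applies verbatim and yields $H^{-s,A^*}(\Omega)=[L^{A^*}(\Omega),W^{-1,A^*}(\Omega)]_s$ with equivalent norms.
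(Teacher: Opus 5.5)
Your proposal is correct and follows essentially the same route as the paper. Both identify $H^{s,A}_0(\Omega)=[L^A(\Omega),W^{1,A}_0(\Omega)]_s$, check that this couple is regular with a reflexive member, apply Theorem \ref{ComplexProps}(6), and identify $(L^A(\Omega))^*=L^{A^*}(\Omega)$ and $(W^{1,A}_0(\Omega))^*=W^{-1,A^*}(\Omega)$. The compatibility of the dual couple, which you single out as the main obstacle and the paper leaves implicit, is in fact automatic: density of $W^{1,A}_0(\Omega)$ in $L^A(\Omega)$ plus the continuous inclusion $W^{1,A}_0(\Omega)\hookrightarrow L^A(\Omega)$ already embed $L^{A^*}(\Omega)$ continuously into $W^{-1,A^*}(\Omega)$ by restriction, so Poincaré only enters if you norm $W^{1,A}_0(\Omega)$ by $\|\nabla u\|_{L^A}$.
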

\noindent{}\textbf{Proof:}
By Theorem \ref{Interpolation}, $$H_0^{s,A}(\Omega)=[L^A(\Omega),W^{1,A}_0(\Omega)]_s,$$ and since both $L^A(\Omega),W^{1,A}_0(\Omega)$ are reflexive and form a regular couple, by Theorem \ref{ComplexProps}(6), we have \begin{align*}
    H^{-s,A^*}(\Omega)&=\left(H^{s,A}_0(\Omega)\right)^*=\left([L^A(\Omega),W^{1,A}_0(\Omega)]_s\right)^*\\
    &=\Big[\left(L^A(\Omega)\right)^*,\left(W^{1,A}_0(\Omega)\right)^*\Big]_s=[L^{A^*}(\Omega),W^{-1,A^*}(\Omega)]_s,
\end{align*}
as we wanted to prove. \qed

In the fractional case, we also have that under the $p,q$-growth assumptions, our space is intermediate with respect two Bessel potential spaces.
\begin{Prop}
    Let $1<p\leq q<\infty$, and $\Omega$ a bounded domain with Lipschitz boundary. Then, we have $$H^{s,q}_0(\Omega)\xhookrightarrow{}H^{s,A}_0(\Omega)\xhookrightarrow{}H^{s,p}_0(\Omega).$$
\end{Prop}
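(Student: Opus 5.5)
We plan to obtain both embeddings by interpolating the unit-order embeddings with the corresponding zero-order ones. By Theorem \ref{Interpolation} and the subsequent discussion for Lipschitz domains, we have
$$H^{s,A}_0(\Omega)=[L^A(\Omega),W^{1,A}_0(\Omega)]_s.$$
The same argument specialised to $A_r(x,t)=t^r/r$ (or the classical Lebesgue results \cite{BellidoCuetoGarcia2025}) gives
$$H^{s,r}_0(\Omega)=[L^r(\Omega),W^{1,r}_0(\Omega)]_s,\qquad r=p,q.$$
It therefore suffices to exhibit the identity map as an admissible operator between the relevant compatible couples and to apply the exactness of the complex functor $\Cc_s$.

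First we record the zero-order embeddings. Since $\Omega$ is bounded and $A$ satisfies $(A0)$, $(Inc)_p$ and $(Dec)_q$, we have
$$L^q(\Omega)\xhookrightarrow{}L^A(\Omega)\xhookrightarrow{}L^p(\Omega).$$
This follows from $L^p\cap L^q\xhookrightarrow{}L^A\xhookrightarrow{}L^p+L^q$ restricted to a set of finite measure, where $L^q(\Omega)=L^p(\Omega)\cap L^q(\Omega)$ and $L^p(\Omega)=L^p(\Omega)+L^q(\Omega)$ up to equivalent norms. Alternatively, it follows directly from the inequalities $A(x,t)\lesssim 1+t^q$ and $t^p\lesssim 1+A(x,t)$, which come from $(A0)$ together with the $(Inc)/(Dec)$ conditions.

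Next we take the unit-order embeddings from Corollary \ref{inclusionesLocal}, which already gives
$$W^{1,q}_0(\Omega)\xhookrightarrow{}W^{1,A}_0(\Omega)\xhookrightarrow{}W^{1,p}_0(\Omega),$$
using $(A1)$ for the Poincar\'e inequality.

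We then view $\mathrm{id}$ as a map of couples, $(L^q,W^{1,q}_0)\to(L^A,W^{1,A}_0)$ and $(L^A,W^{1,A}_0)\to(L^p,W^{1,p}_0)$. All these spaces embed in $L^1(\Omega)$, so each pair is a compatible couple and the identity is well defined on the sums. The interpolation property of $\Cc_s$ then yields
$$[L^q,W^{1,q}_0]_s\xhookrightarrow{}[L^A,W^{1,A}_0]_s\xhookrightarrow{}[L^p,W^{1,p}_0]_s,$$
with norm at most $M_0^{1-s}M_1^s$, where $M_0$ and $M_1$ are the norms of the identity at order zero and order one. Identifying both ends with the Bessel potential spaces completes the argument.

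The main obstacle is the identification of $H^{s,A}_0(\Omega)$ on a Lipschitz domain with the interpolation space $[L^A(\Omega),W^{1,A}_0(\Omega)]_s$. The paper only sketches this step, and it requires consistency of the zero-extension with the whole-space result of Theorem \ref{Interpolation}. If that identification is shaky, we fall back on a direct argument. For $u\in C_c^\infty(\Omega)$ we use $\norm{u}_{H^{s,A}_0}\approx\norm{\nabla^s u}_{L^A(\R^n)}$, which holds by Theorem \ref{thm:poincare_inequality}. However, $\nabla^s u$ is not supported in $\Omega$, so the global embedding $L^q(\R^n)\to L^A(\R^n)$ fails. The tail $|\nabla^s u(y)|\lesssim|y|^{-n-s}\norm{u}_{L^1}$ outside a large ball lies in $L^p\cap L^q$ and can be controlled separately, while the part on the large ball is handled by the finite-measure inclusion. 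For that reason we would prefer the interpolation route and treat the direct estimate only as a backup.
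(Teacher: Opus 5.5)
Your proposal is correct and follows the paper's own argument: you interpolate the zero-order embeddings $L^q(\Omega)\hookrightarrow L^A(\Omega)\hookrightarrow L^p(\Omega)$ together with the first-order ones from Corollary~\ref{inclusionesLocal} using the exactness of $\Cc_s$, and then identify the endpoint spaces through Theorem~\ref{Interpolation} and its Lipschitz-domain analogue. Your remark that this last identification is only sketched in the paper is fair, and your fallback is also sound though not needed: it uses the Poincar\'e inequality and splits $\nabla^s u$ into a near part, handled by finite-measure inclusions, and a tail bounded by $|y|^{-n-s}\norm{u}_{L^1}$.
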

\noindent{}\textbf{Proof:} Since we have that \begin{align*}
    L^q(\Omega)\xhookrightarrow{}&L^A(\Omega)\xhookrightarrow{}L^p(\Omega),\\
    W^{1,q}_0(\Omega)\xhookrightarrow{}&W^{1,A}_0(\Omega)\xhookrightarrow{}W^{1,p}_0(\Omega),
\end{align*}
interpolating the inclusion yields $$H^{s,q}_0(\Omega)=[L^q(\Omega),W^{1,q}_0(\Omega)]_s\xhookrightarrow{}[L^A(\Omega),W^{1,A}_0(\Omega)]_s\xhookrightarrow{}[L^p(\Omega),W^{1,p}_0(\Omega)]_s=H^{s,p}_0(\Omega),$$ and hence the results follows from Theorem \ref{Interpolation}. \qed

In \cite[Proposition 2]{campos2024}, a Gagliardo-Nirenberg inequality on generalized Orlicz spaces is obtained for the Riesz fractional gradient. We recall the proof for completeness: \begin{Teor}
    Let $0\leq r\leq s\leq t\leq 1$, $1<p<\infty$ and $w\in A_p$. Then, for every $u\in H_{0}^{s,A}(\Omega)$, $$\norm{\nabla^su}_{H_{0}^{s,A}(\Omega)}\leq C\norm{\nabla^ru}_{H_{0}^{r,A}(\Omega)}^{1-\theta}\norm{\nabla^tu}_{H_{0}^{t,A}(\Omega)}^\theta,\,\theta=\frac{s-r}{t-r},$$ where $C$ does not depend on $r,s,t$.
\end{Teor}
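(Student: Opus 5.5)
We read the statement as the inequality $\|\nabla^s u\|_{L^A(\R^n;\R^n)}\le C\|\nabla^r u\|_{L^A(\R^n;\R^n)}^{1-\theta}\|\nabla^t u\|_{L^A(\R^n;\R^n)}^{\theta}$ with $\theta=\frac{s-r}{t-r}$. The subscripts $H_0^{\cdot,A}(\Omega)$ in the statement seem to stand for this norm, the weight $w\in A_p$ appears to be a leftover, and $\nabla^0 u$ is taken to mean $u$ (up to a Riesz transform). The plan is a Stein-type argument combining the three lines lemma with imaginary powers of $(-\Delta)^{1/2}$. By density of $C_c^\infty(\Omega)$ in $H_0^{s,A}(\Omega)$ it suffices to take $u\in C_c^\infty(\Omega)$, extended by zero. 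By Proposition \ref{DsProps}(1), on the Fourier side
$$\widehat{\nabla^\sigma u}(\xi)=\frac{2\pi i\xi}{|2\pi\xi|}\,|2\pi\xi|^{\sigma}\,\widehat u(\xi).$$
Consequently $\nabla^{\sigma+i\tau}$ is obtained from $\nabla^\sigma$ by composing with the multiplier $|2\pi\xi|^{i\tau}$.

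\textbf{Step 1 (imaginary powers).} The symbol $m_\tau(\xi)=|2\pi\xi|^{i\tau}$ is smooth away from the origin and homogeneous of degree $0$. Its derivatives satisfy $|\partial^\alpha m_\tau(\xi)|\le C\langle\tau\rangle^{|\alpha|}|\xi|^{-|\alpha|}$; this is the same Faà di Bruno computation as in Lemma \ref{PedroLema}, and here it is simpler because there is no inhomogeneous part. Tracking constants through Theorem \ref{Orliczmultiplier}, i.e. through Lemma \ref{lema1} and Lemma \ref{lema2}, should then give
$$\|T_{m_\tau}\|_{L^A\to L^A}\le C\langle\tau\rangle^{n}.$$
The same bound holds on $L^{A^*}$, since $A^*$ also satisfies the hypotheses under $(Inc)_p$ and $(Dec)_q$.

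\textbf{Step 2 (three lines).} By duality, $L^{A^*}=(L^A)^*$ (available because $A$ satisfies $(A0)$ and is doubling), so it suffices to bound $|\int\nabla^s u\cdot g\,dx|$. Here $g$ ranges over a norming class in the unit ball of $L^{A^*}(\R^n;\R^n)$; we take $g$ Schwartz with $\widehat g$ compactly supported away from $0$, which is dense by the same multiplier arguments. For such $g$ we define on the strip $S$
$$\Phi(z)=e^{(z-\theta)^2}\int_{\R^n}|2\pi\xi|^{(t-r)(z-\theta)}\,\widehat{\nabla^s u}(\xi)\cdot\overline{\widehat g(\xi)}\,d\xi .$$
Since $\widehat g$ is supported in an annulus, $\Phi$ is entire and bounded on $S$, and $\Phi(\theta)=\int\nabla^s u\cdot \bar g$. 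Using $s+(t-r)(z-\theta)=r+(t-r)z$, we get on $\operatorname{Re}z=0$
$$\Phi(i\tau)=e^{(i\tau-\theta)^2}\int \big(m_{(t-r)\tau}\nabla^r u\big)\cdot\bar g .$$
By Step 1 this gives $|\Phi(i\tau)|\le C e^{\theta^2-\tau^2}\langle\tau\rangle^n\|\nabla^r u\|_{L^A}$, uniformly in $r,s,t$, because $\langle(t-r)\tau\rangle\le\langle\tau\rangle$. On $\operatorname{Re}z=1$ we likewise get $|\Phi(1+i\tau)|\le Ce^{(1-\theta)^2-\tau^2}\langle\tau\rangle^n\|\nabla^t u\|_{L^A}$. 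The Gaussian factor absorbs the polynomial growth, so the three lines lemma yields
$$|\Phi(\theta)|\le C\|\nabla^r u\|_{L^A}^{1-\theta}\|\nabla^t u\|_{L^A}^{\theta}.$$
Taking the supremum over $g$ proves the inequality, and density extends it to all $u\in H_0^{s,A}(\Omega)$.

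\textbf{Main obstacle.} The delicate point is Step 1 with explicit polynomial dependence on $\tau$. Lemma \ref{lema2} is stated only qualitatively, so we must check that the Calderón–Zygmund bound depends only on the kernel constants, which in turn depend polynomially on the multiplier constants; the Orlicz modular estimates of \cite{harjuleto2019} should supply this. A secondary issue is justifying that the chosen class of $g$ is norming in $L^{A^*}$, i.e. that functions with Fourier support away from the origin are dense there. We would obtain this from the $L^{A^*}$-boundedness of smooth Littlewood–Paley cutoffs, again via Theorem \ref{Orliczmultiplier}.
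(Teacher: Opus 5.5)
Your argument is correct, but it does not follow the paper's route.

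\textbf{The paper's route.} The paper uses no imaginary powers here. It takes the single real-order multiplier $m_{t,s}(\xi)=(2\pi|\xi|)^s/(1+(2\pi|\xi|)^t)$. From Theorem~\ref{Orliczmultiplier} and the Riesz transforms it gets the additive bound $\|\nabla^s u\|_{L^A}\le C(\|u\|_{L^A}+\|\nabla^t u\|_{L^A})$. It then turns this into the multiplicative bound by dilating $u$ and optimizing in the dilation parameter, which gives the case $r=0$. For $r>0$ it runs the same argument on $\nabla^r u$, using $(-\Delta)^{(s-r)/2}\nabla^r u=\nabla^s u$.

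\textbf{What each route buys.} The paper's route is shorter and needs no $\tau$-dependent bounds. However, its dilation step tacitly assumes that $\|f(\lambda\,\cdot)\|_{L^A}$ equals a fixed power of $\lambda$ times $\|f\|_{L^A}$. That holds in $L^p$, but fails for a non-power Orlicz function. When $A$ depends on $x$ it breaks down entirely, since dilating replaces $A(x,\cdot)$ by $A(x/\lambda,\cdot)$. So in the generalized Orlicz setting the optimization does not by itself produce the exponents $1-\theta$ and $\theta$.

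Your three-lines argument obtains these exponents from analyticity alone and needs no homogeneity of the norm. Uniformity in $r,s,t$ comes for free from $\langle(t-r)\tau\rangle\le\langle\tau\rangle$. In effect it is a homogeneous, hands-on version of the complex interpolation the paper uses in the following theorem, without the detour through the Poincar\'e inequality. The price is the quantitative imaginary-power bound of Step~1. The paper asserts the same kind of bound for $\Lambda_{it}$ in Lemma~\ref{PedroLema}. Because of the damping factor $e^{(z-\theta)^2}$, any growth at most exponential in $|\tau|$ suffices. So it does not matter whether the exponent is $n$ or reflects the $n+2$ derivatives required in Lemma~\ref{lema1}.

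\textbf{Your secondary worry.} This restriction is unnecessary. For $z\in S$ the exponent $r+(t-r)\operatorname{Re}z$ lies in $[r,t]\subset[0,1]$. Hence $|2\pi\xi|^{(t-r)(z-\theta)}\widehat{\nabla^s u}(\xi)$ is bounded in modulus by $\max\{1,|2\pi\xi|\}\,|\widehat u(\xi)|$, with no singularity at $\xi=0$. So $\Phi$ is bounded and holomorphic for every Schwartz $g$. You can therefore norm with $C_c^\infty$, which is already dense in $L^{A^*}$, without imposing any Fourier-support condition.
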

\noindent{}\textbf{Proof:}
Let us define the function $$m_{t,s}(\xi):=\frac{(2\pi|\xi|)^s}{1+(2\pi|\xi|)^t},\xi\in \R^n.$$ By the computations done in \cite[Lemma 3]{campos2024}, is easy to see that $$\sup_{\alpha\in \N_0:|\alpha|\leq \left[\frac{n}{2}\right]+1}\sup_{\xi\in \R^n\setminus\{0\}}\left|\xi^{|\alpha|}\partial^\alpha_\xi m_{t,s}(\xi)\right|<\infty.$$ Hence, Theorem \ref{Orliczmultiplier} implies that the operator $$\widehat{T\varphi}:=m_{s,t}\widehat{\varphi},\,\varphi\in \mathcal{S}(\R^n),$$ extends to a bounded operator from $L^A(\R^n)\to L^A(\R^n)$. Moreover, note that for every $u\in C_c^\infty(\Omega)$, $$\widehat{(-\Delta)^{s/2}u}=(2\pi |\xi|)^s\widehat{u}=\frac{(2\pi |\xi|)^s}{1+(2\pi|\xi|)^t}\left(1+(2\pi|\xi|)^t\right)\widehat{u}=\mathfrak{F}\Bigg\{T_{t,s}\circ \left(id+(-\Delta)^{t/2}\right)u\Bigg\},$$ and hence $(-\Delta)^{s/2}u=T_{t,s}\circ \left(id+(-\Delta)^{t/2}\right)u$. Now, observe that from the fractional fundamental theorem of calculus in Proposition \ref{DsProps}(3), $$(-\Delta)^{\alpha/2}\nabla^su=(-\Delta)^{(\alpha+s)/2}\mathcal{R}u,\,\alpha\in (0,1).$$ Hence, we can estimate \begin{align*}\norm{(-\Delta)^{s/2}u}_{L^A(\R^n)}&=\norm{T_{t,s}\circ \left(id+(-\Delta)^{t/2}\right)u}_{L^A(\R^n)}\leq C\norm{\left(id+(-\Delta)^{t/2}\right)u}_{L^A(\R^n)}\\
&\leq C \norm{u}_{L^A(\R^n)}+\norm{(-\Delta)^{t/2}u}_{L^A(\R^n)},\end{align*} where the constant $C$ does not depend on $t$ and $s$. Hence, $$\norm{\nabla^su}_{L^A(\R^n)}\leq C \norm{\mathcal{R}u}_{L^A(\R^n)}+\norm{\nabla^tu}_{L^A(\R^n)}\leq C \norm{u}_{L^A(\R^n)}+\norm{\nabla^tu}_{L^A(\R^n)},$$and performing a dilation and optimizing the last expression we find that 
$$\norm{\nabla^su}_{L^A(\R^n;\R^n)}\leq C\norm{u}_{L^A(\R^n;\R^n)}^{\frac{t-s}{t}}\norm{\nabla^tu}_{L^A(\R^n;\R^n)}^{\frac{s}{t}},$$ which proves the case $r=0$. Now, since $$(-\Delta)^{(s-r)/2}\nabla^ru=\mathcal{R}(-\Delta)^{s/2}u=\nabla^su,$$ in an analogous way we can prove the case $r>0$, \begin{align*}
    \norm{\nabla^su}_{L^A(\R^n)}&=\norm{(-\Delta)^{(s-r)/2}\nabla^ru}_{L^A(\R^n)}\leq C \norm{\nabla^ru}_{L^A(\R^n)}+\norm{(-\Delta)^{(t-r)/2}\nabla^ru}_{L^A(\R^n)}\\
&\leq C\norm{\nabla^ru}_{L^A(\R^n)}+C\norm{\nabla^tu}_{L^A(\R^n)},\end{align*} and hence 
$$\norm{\nabla^su}_{L^A(\R^n;R^n)}\leq C \norm{\nabla^ru}_{L^A(\R^n)}^{1-\theta}\norm{\nabla^tu}_{L^A(\R^n)}^\theta,$$ for $\theta=\frac{s-r}{t-r}$. Finally, we can extend the result by density for functions $u\in H^A_0(\Omega)$. \qed

By means of the interpolation identity on Theorem \ref{Interpolation} and the complex reiteration theorem \ref{ComplexProps}(8), we can easily derive the result:
\begin{Teor}
    Let $0\leq r\leq s\leq t\leq 1$. For every $u\in W^{1,A}_0(\Omega)$, we have that $$\norm{\nabla^s u}_{L^A(\R^n;\R^n)}\leq C\norm{u}_{L^A(\Omega)}^{1-s}\norm{\nabla u}_{L^A(\Omega;\R^n)}^s.$$ For every $u\in H^{t,A}_0(\Omega)$ we have that $$\norm{\nabla^s u}_{L^A(\R^n;\R^n)}\leq C\norm{\nabla^r u}_{L^A(\R^n;\R^n)}^{\frac{t-s}{t-r}}\norm{\nabla^t u}_{L^A(\R^n;\R^n)}^{\frac{s-r}{t-r}}.$$
\end{Teor}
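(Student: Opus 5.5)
The plan is to deduce both inequalities from the interpolation inequality of the complex method, which says that $\norm{x}_{[E_0,E_1]_\theta}\leq \norm{x}_{E_0}^{1-\theta}\norm{x}_{E_1}^\theta$ for $x\in E_0\cap E_1$. This inequality follows from the definition of $[E_0,E_1]_\theta$ by the standard three-lines construction $f(z)=e^{\varepsilon(z^2-\theta^2)}\lambda^{z-\theta}x$ with a suitable $\lambda>0$. Since $C_c^\infty(\Omega)$ is dense in all the spaces involved, we work with $u\in C_c^\infty(\Omega)$ and pass to the general case by density at the end.

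For the first inequality, take $E_0=L^A(\Omega)$ and $E_1=W^{1,A}_0(\Omega)$. By Theorem~\ref{Interpolation} (in its $\Omega$-version) and Theorem~\ref{Equivalence}, $[E_0,E_1]_s=H^{s,A}_0(\Omega)$ with norm equivalent to $\norm{u}_{L^A}+\norm{\nabla^s u}_{L^A(\R^n;\R^n)}$. By Proposition~\ref{PoincareLocal}, $\norm{\nabla u}_{L^A(\Omega)}$ is an equivalent norm on $W^{1,A}_0(\Omega)$. Combining these facts gives
$$\norm{\nabla^s u}_{L^A}\lesssim \norm{u}_{H^{s,A}_0}\lesssim \norm{u}_{L^A(\Omega)}^{1-s}\norm{\nabla u}_{L^A(\Omega)}^{s}.$$
The cases $s=0$ and $s=1$ are trivial, since $\nabla^1=\nabla$ and $\nabla^0 u=\mathcal{R}u$ is controlled by Lemma~\ref{lema2}.

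For the second inequality, write $s=(1-\theta)r+\theta t$ with $\theta=\frac{s-r}{t-r}$. The reiteration property, Theorem~\ref{ComplexProps}(8), gives $H^{s,A}_0=[H^{r,A}_0,H^{t,A}_0]_\theta$, so the interpolation inequality yields
$$\norm{u}_{H^{s,A}_0}\lesssim\norm{u}_{H^{r,A}_0}^{1-\theta}\norm{u}_{H^{t,A}_0}^{\theta}.$$
The remaining problem is that the right-hand side contains full norms $\norm{u}_{L^A}+\norm{\nabla^r u}_{L^A}$, not just the seminorms $\norm{\nabla^r u}_{L^A}$. For $r>0$ I would remove the zero-order terms using the Poincaré inequality, Theorem~\ref{thm:poincare_inequality}, which gives $\norm{u}_{L^A(\Omega)}\lesssim\norm{\nabla^r u}_{L^A}$. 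Hence $\norm{u}_{H^{r,A}_0}\approx\norm{\nabla^r u}_{L^A}$, and similarly for $t$ and $s$. When $r=0$ the Poincaré step is not needed: $\nabla^0u=\mathcal{R}u$, and the Riesz transform is bounded on $L^A$ with $\norm{u}_{L^A}=\norm{\mathcal{R}\cdot\mathcal{R}u}_{L^A}\lesssim\norm{\mathcal{R}u}_{L^A}$, since $-\mathcal{R}\cdot\mathcal{R}$ is the identity.

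The main obstacle is the uniformity of the constant $C$. The Poincaré constant $C/(1-2^{-r})$ blows up as $r\to0$. The equivalence constants of Theorem~\ref{Equivalence} and of the reiteration identity may also depend on $r,s,t$. I would therefore state $C=C(r,s,t,A,\Omega)$. If uniformity is wanted, the fallback is to run the three-lines argument directly on the analytic family $z\mapsto\Lambda_{-(r+z(t-r))}$. Its imaginary powers are controlled by Lemma~\ref{PedroLema} with the bound $\langle\tau\rangle^n$, and this polynomial growth is absorbed by the Gaussian factor $e^{\varepsilon z^2}$. That argument yields a constant depending only on $n$ and $A$, up to the uniform Poincaré step away from $r=0$.
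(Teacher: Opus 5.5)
Your proposal is correct and follows essentially the paper's own argument: the complex method is exact of exponent $\theta$ (so $\norm{x}_\theta\leq\norm{x}_0^{1-\theta}\norm{x}_1^\theta$), applied to $[L^A(\Omega),W^{1,A}_0(\Omega)]_s=H^{s,A}_0(\Omega)$ and, via reiteration, to $[H^{r,A}_0(\Omega),H^{t,A}_0(\Omega)]_\theta=H^{s,A}_0(\Omega)$, with the Poincar\'e inequalities for $\nabla$ and $\nabla^s$ converting full norms into seminorms. Your separate treatment of $r=0$ via $-\mathcal{R}\cdot\mathcal{R}=\mathrm{id}$, and your observation that $C$ may depend on $r,s,t$, fill in details the paper leaves implicit; the statement does not claim a uniform constant, so the optional analytic-family fallback is not needed.
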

\noindent{}\textbf{Proof:} Since the complex interpolation is an exact interpolation functor of exponent $\theta$, the inclusion $W^{1,A}_0(\Omega)\xhookrightarrow{}[L^A(\Omega),W^{1,A}_0(\Omega)]_s=H^{s,A}_0(\Omega)$ yields that $$\norm{u}_{H^{s,A}_0(\Omega)}\leq \norm{u}_{L^A(\Omega)}^{1-s}\norm{u}_{W^{1,A}_0(\Omega)},$$ and by the Poincar\'es inequality for $\nabla$ and $\nabla^s$, we get that $$\norm{\nabla^s u}_{L^A(\R^n;\R^n)}\leq C\norm{u}_{L^A(\Omega)}^{1-s}\norm{\nabla u}_{L^A(\Omega;\R^n)}^s,$$ for some positive constant $C>0$. By an analogous reasoning, and the fact that for $\theta=\frac{s-r}{t-r}$, Theorem \ref{ComplexProps}(8) yields that $$[H^{r,A}_0(\Omega),H^{t,A}(\Omega)]_\theta=H^{s,A}_0(\Omega),$$ we have that for every $u\in H^{r,A}_0(\Omega)\cap H^{t,A}_0(\Omega)=H^{t,A}_0(\Omega)$, we have that $$\norm{\nabla^s u}_{L^A(\R^n;\R^n)}\leq C\norm{\nabla^r u}_{L^A(\R^n;\R^n)}^{\frac{t-s}{t-r}}\norm{\nabla^t u}_{L^A(\R^n;\R^n)}^{\frac{s-r}{t-r}},$$ for some positive constant $C>0$. \qed 

This result, together with the well known fact that $$H^{s,p_\theta}_0(\Omega)=[H^{s,p_0}_0(\Omega),H^{s,p_1}_0(\Omega)]_\theta,\,\frac{1}{p_\theta
}=\frac{1-\theta}{p_0}+\frac{\theta}{p_1},$$ makes us wonder if we could obtain a similar inequality but varying the $\Phi$-functions. The key here is that on the whole space, $H^{s,A}(\R^n)$ could be seen as a retract of a vector valued generalized-Orlicz space. First of all, given two Banach spaces $E,F$, we say that $F$ is a \textit{retract} of $E$ if there exist two bounded linear mappings $T:F\to E$ and $G:E\to F$ such that $G\circ T$ is the identity for $F$. The notion of retraction is preserved by interpolation methods, i.e., given two compatible couples $(E_0,E_1)$ and $(F_0,F_1)$ such that $F_j$ is a retract of $E_j$, $j=0,1,$, and $\mathcal{F}$ is some interpolation functor, then the interpolation space $\mathcal{F}(F_0,F_1)$ is a retract of $\mathcal{F}(E_0,E_1)$. Hence, a typical method for finding interpolation identities is to search for a suitable retract for whom interpolation is well known (see \cite[Section 2]{BellidoGarcia2025} for more details on retractions and abstract interpolation theory). 

In the Lebesgue setting, it is known that $H^{s,p}$ is a retract of $L^p(l^s_2)$ \cite[Theorem~6.4.3]{BerghLofstrom1976}, i.e., the space of $p$-summable function taking values in the sequence space $l^s_2$, where $l^s_r$, for $s\in \R$ and $0<r\leq \infty$, is the space of $\R$-valued sequences $(x_m)_m$ such that \begin{align*}\norm{(x_m)}_{\ell^{s}_r(E)}&:=\left(\sum_{m=0}^\infty \left(2^{ms}|x_m|\right)^r\right)^{1/r},\quad r<\infty,\\
    \norm{(x_m)}_{\ell^{s}_\infty(E)}&:=\sup_{m\geq 0} 2^{ms}|x_m|,\quad q=\infty.\end{align*} The proof for the Bessel potential spaces is based on Theorem \ref{Milhinnormal}, and can be found in \cite[Theorem 6.4.3]{BerghLofstrom1976}. An analogous proof can be done for our space $H^{s,A}(\R^n)$ just using Theorem \ref{Orliczmultiplier} instead of Theorem \ref{Milhinnormal}, and hence we obtain that is a retract of $L^A(l^s_2)$ (see also \cite[Theorem 5.5]{GarciaFernandez}). We remark that for a Banach space $E$, we define the $E$-valued Musielak-Orlicz space $L^A(\Omega;E)$ as the space $$L^A(\Omega;E):=\Big\{f:\Omega\to E: \int_\Omega A\left(x,\lambda\norm{f(x)}_E\right)\,dx<\infty,\,\text{for some}\,\lambda>0\Big\}.$$
    As in the Lebesgue space, the proof of Theorem \ref{InterpolationOrlicz} from \cite[Theorem 5.1]{Uribe2018} could be straightforwardly adapted to the vector valued case under the assumption of separability of the space $E$. 
    \begin{Teor}
    Let $A_0,A_1\in \Phi(\Omega)$ satisfying $(A0)$ and $E$ a separable Banach space. Then, $$[L^{A_0}(\Omega;E),L^{A_1}(\Omega;E)]_\theta=L^{A_\theta}(\Omega;E),$$ where $$A_\theta^{-1}(x,t)=(A_0^{-1}(x,t))^{1-\theta}(A_1^{-1}(x,t))^\theta,$$ for every $0<\theta<1$.
    \end{Teor}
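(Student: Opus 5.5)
We follow the scalar proof of Theorem \ref{InterpolationOrlicz} from \cite[Theorem 5.1]{Uribe2018} and reduce it, as far as possible, to the scalar result plus a factorization of $E$-valued functions. Every $f\in L^{A}(\Omega;E)$ factors as $f=|f|\,\sigma$, where $|f|(x):=\norm{f(x)}_E$ is a scalar function in $L^A(\Omega)$ with the same norm, and $\sigma(x):=f(x)/\norm{f(x)}_E$ (set to $0$ where $f=0$) is strongly measurable with $\norm{\sigma(x)}_E\le 1$. Strong measurability of $\sigma$ follows from Pettis' theorem, and this is where the separability of $E$ is used.

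\textbf{Inclusion $L^{A_\theta}(\Omega;E)\hookrightarrow[L^{A_0}(\Omega;E),L^{A_1}(\Omega;E)]_\theta$.} Take $f\in L^{A_\theta}(\Omega;E)$ with $\norm{f}_{L^{A_\theta}}<1$. The scalar proof of Theorem \ref{InterpolationOrlicz} provides an analytic family $g_z$ with $g_\theta=|f|$ and $\sup_t\norm{g_{j+it}}_{L^{A_j}}\le C$, $j=0,1$. Concretely, $g_z(x)=A_0^{-1}(x,\cdot)^{1-z}A_1^{-1}(x,\cdot)^{z}$ evaluated at $A_\theta(x,|f(x)|)$, possibly multiplied by an $e^{\varepsilon(z-\theta)^2}$ factor to get decay as $|t|\to\infty$. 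Define $F(z)(x):=g_z(x)\sigma(x)$. Then $\norm{F(j+it)(x)}_E=|g_{j+it}(x)|$, so the boundary bounds carry over verbatim. Analyticity into $L^{A_0}+L^{A_1}$ follows since $F$ is a scalar analytic family multiplied by a fixed bounded $E$-valued function. The main technical point is that the scalar construction is pointwise in $x$, so it commutes with multiplication by $\sigma$. If the scalar proof is only available abstractly, one uses instead that $g\mapsto g\sigma$ is a bounded operator $L^{A_j}(\Omega)\to L^{A_j}(\Omega;E)$ of norm $\le 1$ for $j=0,1$, and applies the exact interpolation property to get the inclusion directly.

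\textbf{Reverse inclusion.} This is the step I expect to be the main obstacle. Let $F\in\mathfrak{F}(L^{A_0}(\Omega;E),L^{A_1}(\Omega;E))$ with $F(\theta)=f$. The plan is a duality argument. First reduce, by density in the interpolation space (Theorem \ref{ComplexProps}(4)), to $f$ a simple function with values in a finite-dimensional subspace of $E$. Then choose a strongly measurable norming function $\phi:\Omega\to E^*$ with $\langle \phi(x),f(x)\rangle=\norm{f(x)}_E$ and $\norm{\phi(x)}_{E^*}=1$; for simple $f$ this is trivial by Hahn–Banach. This reduces the problem to the scalar function $h(z)(x):=\langle\phi(x),F(z)(x)\rangle$. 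It satisfies $|h(j+it)(x)|\le\norm{F(j+it)(x)}_E$, so $h\in\mathfrak{F}(L^{A_0},L^{A_1})$ with norm $\le\norm{F}$. Moreover $h(\theta)=|f|$. The scalar Theorem \ref{InterpolationOrlicz} then gives $\norm{|f|}_{L^{A_\theta}}\le C\norm{F}$, and taking the infimum over $F$ yields the bound.

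\textbf{Extension from simple functions.} A general $f$ in the interpolation space is approximated by simple functions in the interpolation norm. Pass to an a.e. convergent subsequence, which is possible because the interpolation norm dominates the $L^{A_0}+L^{A_1}$ norm and hence gives convergence in measure. Conclude by Fatou's lemma for the Luxemburg norm. Throughout, $(A0)$ is used exactly as in \cite{Uribe2018}, to ensure $L^{A_0}\cap L^{A_1}$ contains the simple functions and that $A_\theta$ is a genuine $\Phi$-function equivalent to one with the stated inverse.
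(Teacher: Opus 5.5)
The paper gives no argument here beyond the remark that the scalar proof of \cite[Theorem 5.1]{Uribe2018} adapts to separable $E$. Your route is different and more modular. You never reopen the scalar proof. Instead you transfer Theorem \ref{InterpolationOrlicz} through two pointwise contractions: $M_\sigma g=g\sigma$ from $L^{A_j}(\Omega)$ to $L^{A_j}(\Omega;E)$, and $P_\phi G=\langle \phi(\cdot),G(\cdot)\rangle$ from $L^{A_j}(\Omega;E)$ to $L^{A_j}(\Omega)$. This uses only the exactness of $\mathcal{C}_\theta$ and the lattice property of the Luxemburg norm. The abstract form of your first inclusion is correct and is the version to keep: $M_\sigma|f|=f$ and $\|M_\sigma\|\le 1$ at both endpoints. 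The explicit family $g_z$ is superfluous, and without doubling the continuity of $t\mapsto g_{j+it}$ into $L^{A_j}$ is not automatic.

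The reverse inclusion contains a step that fails under the stated hypotheses: the reduction by density to simple functions. Theorem \ref{ComplexProps}(4) only gives density of $L^{A_0}(\Omega;E)\cap L^{A_1}(\Omega;E)$. With nothing but $(A0)$, simple functions need not be dense there, nor in the interpolation space. For example, take $\Omega$ the unit ball and $A_0(x,t)=A_1(x,t)=e^t-1$, which satisfies $(A0)$ with $\beta=1/2$. Theorem \ref{ComplexProps}(3) gives $[L^{A_0}(\Omega;E),L^{A_0}(\Omega;E)]_\theta=L^{A_0}(\Omega;E)$. For a unit vector $v_0\in E$, the function $x\mapsto \log(1/|x|)\,v_0$ lies in this space but at distance at least $1/n$ from every bounded function. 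So your step fails as written.

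The reduction is also unnecessary. Apply $P_\phi$ directly to an arbitrary $f$ in the interpolation space, with an almost norming measurable $\phi$, built as follows.
\begin{enumerate}
\item Since $E$ is separable, pick $(\psi_k)$ in the unit ball of $E^*$ with $\|w\|_E=\sup_k|\langle\psi_k,w\rangle|$ for all $w\in E$.
\item Let $k(x)$ be the least $k$ with $|\langle \psi_k,f(x)\rangle|\ge(1-\varepsilon)\|f(x)\|_E$.
\item Set $\phi(x)=c(x)\psi_{k(x)}$, with $|c(x)|=1$ chosen so that the pairing is nonnegative.
\end{enumerate}
Then $\|P_\phi\|\le 1$ at both endpoints and $(1-\varepsilon)\|f(x)\|_E\le P_\phi f(x)\le \|f(x)\|_E$. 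Exactness, the scalar theorem and solidity of the Luxemburg norm then give $(1-\varepsilon)\|f\|_{L^{A_\theta}(\Omega;E)}\le C\|f\|_{[L^{A_0}(\Omega;E),L^{A_1}(\Omega;E)]_\theta}$ at once, with no approximation or Fatou argument.

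This is also where separability genuinely enters. Strong measurability of $\sigma=\chi_{\{f\neq 0\}}\|f\|_E^{-1}f$ needs no Pettis theorem, since it is a measurable scalar function times a strongly measurable one.
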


    From here, the following results follows directly:
    \begin{Teor}
        Let $A_0,A_1\in \Phi(\R^n)$ under the hypothesis of Theorem \ref{Orliczmultiplier} and $A_s\in \Phi(\R^n)$ such that $$A_\theta^{-1}(x,t)=\left(A_0^{-1}(x,t)\right)^{1-\theta}\left(A^{-1}_1(x,t)\right)^\theta.$$. Then, $$\norm{\nabla^s u}_{L^{A_\theta}}\leq C \norm{\nabla^su}_{L^{A_0}}^{1-\theta}\norm{\nabla^su}_{L^{A_1}}^\theta,$$ for every $u\in H^{s,A_0}_0(\Omega)\cap H_0^{s,A_1}(\Omega)$.
    \end{Teor}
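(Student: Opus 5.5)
The inequality concerns $L^{A_\theta}$-norms of the single vector field $F:=\nabla^s u$. The natural route is to derive it from the complex interpolation of vector-valued generalized Orlicz spaces just stated, applied with $E=\R^n$ (or $\C^n$), which is trivially separable. No differentiability theory should be needed: the whole statement is a property of $F$ as an element of $L^{A_0}(\R^n;\R^n)\cap L^{A_1}(\R^n;\R^n)$.

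\textbf{Step 1: $F$ lies in the intersection.} Let $u\in H^{s,A_0}_0(\Omega)\cap H^{s,A_1}_0(\Omega)$. By Theorem \ref{Equivalence}, applied to $A_0$ and to $A_1$ (both satisfy the hypotheses of Theorem \ref{Orliczmultiplier}, hence those of Theorem \ref{Equivalence}), $\nabla^s u$ is defined in each space as a limit of $\nabla^s u_m$. Both limits agree, since each is the distributional fractional gradient $D^s u$. Hence
\[
F\in L^{A_0}(\R^n;\R^n)\cap L^{A_1}(\R^n;\R^n).
\]

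\textbf{Step 2: exact interpolation bound.} By the vector-valued version of Theorem \ref{InterpolationOrlicz}, $[L^{A_0}(\R^n;\R^n),L^{A_1}(\R^n;\R^n)]_\theta=L^{A_\theta}(\R^n;\R^n)$ with equivalent norms, with constant $C$ say. The complex method is exact of exponent $\theta$, so for every $x\in E_0\cap E_1$ we have $\norm{x}_{[E_0,E_1]_\theta}\le \norm{x}_{E_0}^{1-\theta}\norm{x}_{E_1}^\theta$. This holds when both norms are nonzero, via the standard rescaled function $f(z)=e^{\varepsilon(z^2-\theta^2)}(\norm{x}_{E_0}/\norm{x}_{E_1})^{z-\theta}x$, letting $\varepsilon\to0$; it is trivial when one of them vanishes. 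Combining the two gives
\[
\norm{F}_{L^{A_\theta}}\le C\norm{F}_{L^{A_0}}^{1-\theta}\norm{F}_{L^{A_1}}^{\theta},
\]
which is the claim.

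\textbf{Main obstacle.} The delicate point is the norm equivalence in the vector-valued interpolation theorem. Its constant depends only on the (A0) constants of $A_0,A_1$, since the proof in \cite{Uribe2018} works through $A_j^{-1}$ and the Luxemburg norm of $|F(x)|_E$. The constant $C$ is therefore independent of $u$. The plan is to check that the scalar factorization argument of \cite[Theorem 5.1]{Uribe2018} carries over with $|F|$ in place of $|f|$ and the direction $F/|F|$ held fixed; for $E=\R^n$ this is immediate, because $F=|F|\,\omega$ with $|\omega|=1$, and one interpolates $|F|$ as a scalar. A secondary point is that $A_\theta$ is indeed a $\Phi$-function satisfying (A0), which follows since $A_\theta^{-1}(x,1)\approx1$.

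If one prefers to state it on $\R^n$ rather than $\Omega$, nothing changes, because the norms in the statement are already taken over $\R^n$.
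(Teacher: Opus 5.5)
Your argument is correct, and it takes a different and much more direct route than the paper. The paper works at the level of the spaces. It uses that $H^{s,A_j}(\R^n)$ is a retract of $L^{A_j}(\ell^s_2)$, together with the vector-valued version of Theorem \ref{InterpolationOrlicz} for $E=\ell^s_2$, to get $[H^{s,A_0}(\R^n),H^{s,A_1}(\R^n)]_\theta=H^{s,A_\theta}(\R^n)$. It then passes ``by extension by zero'' to $[H^{s,A_0}_0(\Omega),H^{s,A_1}_0(\Omega)]_\theta=H^{s,A_\theta}_0(\Omega)$ for Lipschitz $\Omega$. Finally it combines exactness of exponent $\theta$ with the fractional Poincar\'e inequality to replace the full norms by $\norm{\nabla^s u}$.

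You instead observe that the inequality only concerns the single field $F=\nabla^s u$ measured in three Musielak--Orlicz norms. The log-convexity $\norm{x}_{[E_0,E_1]_\theta}\le\norm{x}_{E_0}^{1-\theta}\norm{x}_{E_1}^\theta$ in $[L^{A_0},L^{A_1}]_\theta=L^{A_\theta}$ then finishes the proof. This avoids the retract, the $\ell^s_2$-valued interpolation, the Lipschitz assumption and the Poincar\'e inequality. Above all it avoids the subspace identity for $H^{s,A}_0(\Omega)$, which is the delicate step in the paper: complex interpolation does not commute with passing to closed subspaces in general, and the paper itself leaves the description of $H^{s,A}_0(\Omega)$ as functions vanishing outside $\Omega$ open.

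Your argument also shows the estimate holds for every $F\in L^{A_0}\cap L^{A_1}$, with nothing specific to $\nabla^s$. Even interpolation can be avoided: for $a,b\ge 0$ one has pointwise $A_\theta(x,a^{1-\theta}b^\theta)\le A_0(x,a)+A_1(x,b)$, and taking $a=|F(x)|/\lambda_0$, $b=|F(x)|/\lambda_1$ gives the inequality with $C=2$. What the paper's route buys in exchange is the interpolation identity for the spaces $H^{s,A}_0(\Omega)$ themselves, which yields interpolation of operators rather than just a norm inequality.

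Two minor points. First, the ``main obstacle'' you identify is not one. By definition $\norm{F}_{L^A(\R^n;\R^n)}=\norm{|F|}_{L^A(\R^n)}$, so the scalar Theorem \ref{InterpolationOrlicz} applied to $|F|$ already suffices. Second, Step 1 does not need Theorem \ref{Equivalence}. The cleanest way to see that the two limits defining $\nabla^s u$ agree is to pass to the limit in $\int\nabla^s u_m\cdot\varphi\,dx=-\int u_m\operatorname{div}^s\varphi\,dx$ for $\varphi\in C_c^\infty(\R^n;\R^n)$. This works because $\operatorname{div}^s\varphi\in L^1\cap L^\infty\subset L^{A_j^*}$ under (A0).
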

    \noindent{}\textbf{Proof:} Since $H^{s,A_j}(\R^n)$ is a retract of $L^{A_j}(l^s_2)$ for $j=0,1$, we have that $$[H^{s,A_0}(\R^n),H^{s,A_1}(\R^n)]_\theta=[L^{A_0}(l^s_2),L^{A_1}(l^s_2)]_\theta=L^{A_\theta}(l^s_2)=H^{s,A_\theta}(\R^n),$$ and by extension by zero to the whole space, it holds that $$[H_0^{s,A_0}(\Omega),H_0^{s,A_1}(\Omega)]_\theta=H^{s,A_\theta}_0(\Omega),$$ whenever $\Omega$ is a Lipschitz domain. Hence, since $\mathcal{C}_\theta$ is an exact functor of exponent $\theta$, and the Poincaré's inequality on fractional generalized Orlicz-Sobolev spaces, it holds that  $$\norm{\nabla^s u}_{L^{A_\theta}}\leq C \norm{\nabla^su}_{L^{A_0}}^{1-\theta}\norm{\nabla^su}_{L^{A_1}}^\theta,$$ for every $u\in H^{s,A_0}_0(\Omega)\cap H_0^{s,A_1}(\Omega)$, as we wanted to prove. \qed
    \begin{Obse}
        We could define \textit{Triebel-Lizorkin} spaces $F^{s}_{A,q}$ on the generalized Orlicz setting in the usual sense as a retract of $L^A(l^s_q)$ for some $1<q<\infty$ adapting the usual proof by means of Theorem \ref{Orliczmultiplier}, and hence we would have the well known identity between Bessel potential and Triebel-Lizorkin spaces on this setting, i.e., $H^{s,A}=F^s_{A,2}$.
    \end{Obse}
 
The following result establishes by interpolation, a fractional Orlicz-Sobolev embedding on the spirit of \cite[Theorem 1]{campos2024}.
\begin{Teor}[Fractional Sobolev embedding]
    Let $A\in\Phi(\R^n)$ satisfying $(A0)-(A2)$, $(Inc)$ and $(Dec)_q$ for some $q<n$. Then, $$H^{s,A}_0(\Omega)\xhookrightarrow{}L^{A_s}(\Omega),$$  where $A_s$ is a $\Phi$-function such that $A_s^{-1}(x,t)\approx t^{-s/n}A^{-1}(x,t)$.
\end{Teor}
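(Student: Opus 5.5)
The plan is to obtain the embedding by interpolating the identity operator between two endpoint embeddings. The first endpoint is trivial. The second is the classical Sobolev embedding of Proposition \ref{SobolevEmbedding}. Concretely, let $B\in\Phi(\Omega)$ be a $\Phi$-function with $B^{-1}(x,t)\approx t^{-1/n}A^{-1}(x,t)$; I read the condition in Proposition \ref{SobolevEmbedding} as this (Sobolev-conjugate) relation. Consider the identity map $\operatorname{id}$ on the couples
$$\operatorname{id}:L^A(\Omega)\to L^A(\Omega),\qquad \operatorname{id}:W^{1,A}_0(\Omega)\to L^{B}(\Omega).$$
The first is bounded with norm $1$. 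The second is bounded by Proposition \ref{SobolevEmbedding}, which applies because $A$ satisfies $(A0)$--$(A2)$ and $(Dec)_q$ with $q<n$. To be safe I would invoke it on a Lipschitz domain, for instance a ball $B_R\supset\Omega$, using extension by zero of $W^{1,A}_0(\Omega)$ functions.

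The second step is to apply the exactness of the complex method $\mathcal{C}_s$. This gives
$$\operatorname{id}:[L^A(\Omega),W^{1,A}_0(\Omega)]_s\to[L^A(\Omega),L^{B}(\Omega)]_s$$
boundedly. The left-hand side is $H^{s,A}_0(\Omega)$ by the bounded-domain version of Theorem \ref{Interpolation} stated after it. The right-hand side is identified by Theorem \ref{InterpolationOrlicz}:
$$[L^A(\Omega),L^B(\Omega)]_s=L^{A_s}(\Omega),\qquad A_s^{-1}=(A^{-1})^{1-s}(B^{-1})^{s}\approx (A^{-1})^{1-s}\,t^{-s/n}(A^{-1})^{s}=t^{-s/n}A^{-1}.$$
This is exactly the $\Phi$-function in the statement.

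The main obstacle is verifying the hypotheses of Theorem \ref{InterpolationOrlicz} and handling the "$\approx$". Theorem \ref{InterpolationOrlicz} requires both $A$ and $B$ to satisfy $(A0)$. For $B$ this follows from $B^{-1}(x,1)\approx A^{-1}(x,1)\approx 1$, using the characterization of $(A0)$ through $A^{-1}(x,1)\approx1$ recalled earlier. One also has to check that a genuine convex $\Phi$-function $B$ with the prescribed inverse exists. Here I would use $(Dec)_q$ with $q<n$: it makes $t\mapsto t^{-1/n}A^{-1}(x,t)$ behave like the inverse of an $(aInc)_{nq/(n-q)}$-type function, and then \cite[Lemma 2.2]{harjuleto2016} upgrades it to an honest convex $\Phi$-function. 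Finally, pointwise equivalence of left inverses, $A_1^{-1}\approx A_2^{-1}$, implies equivalence of the $\Phi$-functions. Hence $L^{A_1}=L^{A_2}$ with equivalent norms, so replacing the exact interpolated function by any $A_s$ with $A_s^{-1}\approx t^{-s/n}A^{-1}$ does not change the target space. Combining these gives $H^{s,A}_0(\Omega)\hookrightarrow L^{A_s}(\Omega)$.
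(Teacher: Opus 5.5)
Your proposal is the paper's proof. You interpolate the identity between $L^A(\Omega)\to L^A(\Omega)$ and the embedding $W^{1,A}_0(\Omega)\to L^B(\Omega)$ of Proposition \ref{SobolevEmbedding}, then identify $[L^A(\Omega),L^B(\Omega)]_s=L^{A_s}(\Omega)$ through Theorem \ref{InterpolationOrlicz} with $A_s^{-1}=(A^{-1})^{1-s}(B^{-1})^s\approx t^{-s/n}A^{-1}$. Your extra checks ($(A0)$ for $B$, passing to a ball for the Lipschitz requirement, equivalence of $\Phi$-functions) are details the paper leaves implicit.

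There is one slip in those extras. $(Dec)_q$ with $q<n$ makes $t^{-1/n}A^{-1}$ increasing and unbounded, and gives $B$ the property $(aDec)_{nq/(n-q)}$, not $(aInc)_{nq/(n-q)}$. The almost-increasing property you need to upgrade $B$ to a convex $\Phi$-function comes instead from $(Inc)_p$ of $A$, which yields $(aInc)_{np/(n-p)}$ for $B$.
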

\noindent{}\textbf{Proof:} By the Proposition \ref{SobolevEmbedding}, we have that $W^{1,A}_0(\Omega)\xhookrightarrow{}L^B(\Omega)$, where $B^{-1}(x,t)\approx t^{-1/n}A^{-1}(x,t)$. Then, by Theorem \ref{InterpolationOrlicz}, \begin{align*}
    H^{s,A}_0(\Omega)=[L^A(\Omega),W^{1,A}_0(\Omega)]_s\xhookrightarrow{}[L^A(\Omega),L^B(\Omega)]_s=L^{A_s}(\Omega),
\end{align*} where $$A_s^{-1}(x,t)=(A^{-1}(x,t))^{1-s}(B^{-1}(x,t))^s.$$ Since $B^{-1}(x,t)\approx t^{-1/n}A^{-1}(x,t)$, we get that $$A_s^{-1}(x,t)\approx(A^{-1}(x,t))^{1-s}\left(t^{-1/n}A^{-1}(x,t)\right)^s=t^{-s/n}A^{-1}(x,t),$$ as we wanted to prove. \qed

By an analogous reasoning, we directly obtain the following intermediate embeddings:
\begin{Coro}
    Let $A\in\Phi(\R^n)$ under our assumptions and such that $q<n$, and $\gamma\in [0,s]$. Then, $$H^{s,A}_0(\Omega)\xhookrightarrow{}L^{A_\gamma}(\Omega),$$  where $A_{\gamma}$ is a $\Phi$-function such that $A_\gamma^{-1}(x,t)\approx t^{-\gamma/n}A^{-1}(x,t)$.
\end{Coro}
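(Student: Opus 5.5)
The plan is to factor the embedding through an intermediate fractional space, using complex interpolation exactly as in the proof of the preceding fractional Sobolev embedding. For $\gamma=0$ the statement is simply $H^{s,A}_0(\Omega)\xhookrightarrow{}L^A(\Omega)$, which follows from Lemma giving $H^{s,A}(\R^n)\xhookrightarrow{}L^A(\R^n)$ (or from the fractional Poincaré inequality). So we may assume $\gamma\in(0,s]$.

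\emph{Step 1.} Write $H^{s,A}_0(\Omega)\xhookrightarrow{}H^{\gamma,A}_0(\Omega)$. For $\gamma=s$ this is trivial; for $\gamma<s$ it is item 2 of the structural Proposition, which is itself a consequence of Theorem \ref{ComplexProps}(2) applied to $H^{\cdot,A}_0(\Omega)=[L^A(\Omega),W^{1,A}_0(\Omega)]_\cdot$.

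\emph{Step 2.} Apply the fractional Sobolev embedding just proved, with $\gamma$ in place of $s$. This gives $H^{\gamma,A}_0(\Omega)\xhookrightarrow{}L^{A_\gamma}(\Omega)$ with $A_\gamma^{-1}(x,t)\approx t^{-\gamma/n}A^{-1}(x,t)$. Composing the two embeddings yields the claim.

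\emph{Alternative route.} One can instead interpolate directly. From $W^{1,A}_0(\Omega)\xhookrightarrow{}L^B(\Omega)$, where $B^{-1}\approx t^{-1/n}A^{-1}$, and $L^A\xhookrightarrow{}L^A$, interpolating at parameter $s$ gives $H^{s,A}_0\xhookrightarrow{}L^{A_s}$. On a bounded $\Omega$ one then needs $L^{A_s}(\Omega)\xhookrightarrow{}L^{A_\gamma}(\Omega)$. This holds because $A_\gamma=[L^A,L^{A_s}]_{\gamma/s}$-type by Theorem \ref{InterpolationOrlicz}, since $A_\gamma^{-1}\approx (A^{-1})^{1-\gamma/s}(A_s^{-1})^{\gamma/s}$. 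Moreover, $L^{A_s}(\Omega)\xhookrightarrow{}L^A(\Omega)$ on bounded sets, because $A_s^{-1}\le A^{-1}$ for $t\ge1$ and $(A0)$ handles small $t$. Then the interpolation inequality $\|u\|_{[E_0,E_1]_\theta}\le \|u\|_{E_0}^{1-\theta}\|u\|_{E_1}^\theta$ gives the embedding.

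The only delicate point is ensuring that the $\Phi$-functions involved are genuine $\Phi$-functions satisfying $(A0)$, so that Theorem \ref{InterpolationOrlicz} applies. For $A_\gamma$, $(A0)$ follows from $A^{-1}(x,1)\approx1$ and the fact that $t^{-\gamma/n}=1$ at $t=1$. The main obstacle is checking the hypothesis "$(Dec)_q$, $q<n$" needed in Step 2 with $\gamma$. Since this hypothesis is on $A$, not on $\gamma$, it is inherited unchanged, so the first route works directly.
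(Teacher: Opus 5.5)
Your proof is correct and takes essentially the paper's approach. The paper only says the corollary follows ``by an analogous reasoning'' to the fractional Sobolev embedding, namely complex interpolation of $W^{1,A}_0(\Omega)\xhookrightarrow{}L^B(\Omega)$ combined with Theorem~\ref{InterpolationOrlicz}. Both of your routes carry this out: the main one uses $H^{s,A}_0\xhookrightarrow{}H^{\gamma,A}_0$ plus the embedding at level $\gamma$, with $\gamma=0$ handled by $H^{s,A}\xhookrightarrow{}L^A$; the alternative interpolates between $L^A$ and $L^{A_s}$ at $\theta=\gamma/s$, as the paper also does in its Rellich--Kondrachov proof. In the alternative route you do not even need $L^{A_s}(\Omega)\xhookrightarrow{}L^A(\Omega)$, since $H^{s,A}_0(\Omega)\xhookrightarrow{}L^A(\Omega)\cap L^{A_s}(\Omega)\xhookrightarrow{}[L^A(\Omega),L^{A_s}(\Omega)]_{\gamma/s}$ directly.
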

\begin{Obse}
    Note that if we choose $A(t,x)=t^p$, the latest embedding reads as $$H^{s,p}_0(\Omega)\xhookrightarrow{}L^{A_s}(\Omega),$$ where $$A^{-1}_s(x,t)\approx t^{s-/n}t^{1/p}=t^{\frac{-s}{n}+\frac{1}{p}}=t^{\frac{n-sp}{np}},$$ and hence we recover the subcritial fractional Sobolev embedding from \cite[Theorem 3.21(1)]{BellidoGarcia2025} $$H^{s,p}_0(\Omega)\xhookrightarrow{}L^{\frac{np}{n-sp}}(\Omega).$$ Note that here we have assumed that $p<n$, however in the Sobolev case we know that it is enough to have $sp<n$. In order to prove it by interpolation under this weaker assumption, we would require Orlicz-Sobolev embeddings as in Proposition \ref{SobolevEmbedding} in the cases $q\geq n$.
\end{Obse}
To conclude this section we establish by the same means a compactness result for our spaces: 
\begin{Teor}[Fractional Rellich-Kondrachov]
    Let $A\in \Phi(\R^n)$ under our assumptions and such that $q<n$. Then, we have the compact embedding $$H^{s,A}_0(\Omega)\xhookrightarrow{}\xhookrightarrow{}L^{A_\alpha}(\Omega),$$ where $A^{-1}_\alpha(x,t)\approx t^{-\alpha}A^{-1}(x,t),$ for every $\alpha\in [0,s/n)$.
\end{Teor}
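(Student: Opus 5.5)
The plan is to interpolate a compact embedding at the integer level, exactly as in the proof of the compact embedding $H^{s,A}_0(\Omega)\xhookrightarrow{}\xhookrightarrow{}H^{t,A}_0(\Omega)$. Fix $\alpha\in[0,s/n)$ and write $\alpha=\theta/n\cdot s'$ in a suitable way. Concretely, choose $\beta:=n\alpha/s\in[0,1)$, so that $s\beta/n=\alpha$. By Proposition \ref{CompactEmbedding}, since $q<n$ and $\beta/n\in[0,1/n)$, we have the compact embedding
$$W^{1,A}_0(\Omega)\xhookrightarrow{}\xhookrightarrow{}L^{B_\beta}(\Omega),\qquad B_\beta^{-1}(x,t)\approx t^{-\beta/n}A^{-1}(x,t).$$
Trivially the identity maps $L^A(\Omega)\to L^A(\Omega)$ continuously. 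We then consider the identity operator acting between the couples $(L^A(\Omega),W^{1,A}_0(\Omega))$ and $(L^A(\Omega),L^{B_\beta}(\Omega))$, continuous on the first endpoint and compact on the second.

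Apply Theorem \ref{ComplexProps}(9) with $\theta=s$. Its hypothesis is that $E_0=E_1$, or $F_0=F_1$, or $E_1$ is UMD; here $E_1=W^{1,A}_0(\Omega)$ is UMD by Theorem \ref{UMD}, since $(Inc)_p$ and $(Dec)_q$ make both $A$ and $A^*$ doubling. This gives compactness of
$$[L^A(\Omega),W^{1,A}_0(\Omega)]_s\xhookrightarrow{}\xhookrightarrow{}[L^A(\Omega),L^{B_\beta}(\Omega)]_s.$$
The left side is $H^{s,A}_0(\Omega)$ by the interpolation identity following Theorem \ref{Interpolation}. For the right side, apply Theorem \ref{InterpolationOrlicz}: $B_\beta$ satisfies $(A0)$ because $B_\beta^{-1}(x,1)\approx A^{-1}(x,1)\approx1$. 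Hence the space is $L^{C}(\Omega)$ with
$$C^{-1}(x,t)=(A^{-1})^{1-s}(B_\beta^{-1})^{s}\approx t^{-s\beta/n}A^{-1}(x,t)=t^{-\alpha}A^{-1}(x,t).$$
Equivalent $\Phi$-functions give the same space with equivalent norms, so this is $L^{A_\alpha}(\Omega)$ and the claim follows. The case $\alpha=0$ can alternatively be obtained by composing the previous compact embedding into $H^{t,A}_0$ with $H^{t,A}_0\hookrightarrow L^A$.

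The main obstacle is the compactness interpolation step. First, one must check that the couples are compatible (all spaces sit in $L^p(\Omega)+L^q(\Omega)$) and that $B_\beta$ is a genuine $\Phi$-function satisfying $(A0)$, so that Theorem \ref{InterpolationOrlicz} applies. Second, one must make sure the UMD hypothesis of Theorem \ref{ComplexProps}(9) is met. If the UMD route caused trouble, I would instead use the route with $F_0=F_1$: take $\beta'\in(\beta,1)$ still in $[0,1)$, and compare endpoints through the continuous embedding $L^{B_{\beta'}}\hookrightarrow L^{B_\beta}$ on the bounded domain. Either route yields the stated compact embedding.
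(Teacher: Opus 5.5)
Your proof is correct, and it differs from the paper's in a way that matters.

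\textbf{The paper's route.} The paper argues in two steps. First it asserts that $W^{1,A}_0(\Omega)\hookrightarrow L^B(\Omega)$ is compact with the \emph{critical} function $B^{-1}\approx t^{-1/n}A^{-1}$. It interpolates this via UMD to get a compact embedding $H^{s,A}_0(\Omega)\hookrightarrow L^{A_s}(\Omega)$, where $A_s^{-1}\approx t^{-s/n}A^{-1}$. It then interpolates a second time between $L^A(\Omega)$ and $L^{A_s}(\Omega)$, keeping $H^{s,A}_0(\Omega)$ fixed as the domain.

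\textbf{Where it breaks.} Proposition \ref{CompactEmbedding} only gives compactness for exponents in $[0,1/n)$, so it does not apply at $1/n$. The intermediate claim is in fact false already for $A(t)=t^p$. It would make the critical embedding $H^{s,p}_0(\Omega)\hookrightarrow L^{np/(n-sp)}(\Omega)$ compact, which the concentrating family $\lambda^{(n-sp)/p}u(\lambda x)$ rules out.

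\textbf{What your route buys.} You work at the integer level with the subcritical exponent $\beta/n=\alpha/s<1/n$. This stays inside the range of Proposition \ref{CompactEmbedding} and reaches every $\alpha\in[0,s/n)$ with a single interpolation. The price is that you genuinely need the UMD alternative in Theorem \ref{ComplexProps}(9), since here $E_0\neq E_1$ and $F_0\neq F_1$.

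\textbf{How the paper's scheme can be repaired.} Move the compactness to the other endpoint. The embedding $H^{s,A}_0(\Omega)\hookrightarrow L^A(\Omega)$ is compact; this follows from the case $F_0=F_1=L^A(\Omega)$, or from the direct fractional Rellich--Kondrachov theorem recalled earlier. The embedding $H^{s,A}_0(\Omega)\hookrightarrow L^{A_s}(\Omega)$ is continuous by the fractional Sobolev embedding. Interpolating these with $E_0=E_1=H^{s,A}_0(\Omega)$ gives the result. That version avoids UMD but relies on the critical continuous embedding, whereas yours uses only subcritical compact ones.

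\textbf{Your fallback.} The fallback you sketch does not work as written. The target couple $(L^A(\Omega),L^{B_\beta}(\Omega))$ is not constant, and $L^A(\Omega)\not\hookrightarrow L^{B_\beta}(\Omega)$ for $\beta>0$. So passing from $\beta$ to $\beta'$ never produces an $F_0=F_1$ configuration. The $E_0=E_1$ scheme above is the correct UMD-free alternative.
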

\noindent{}\textbf{Proof:} By Proposition \ref{CompactEmbedding}, $$W^{1,A}_0(\Omega)\xhookrightarrow{}\xhookrightarrow{}L^B(\Omega),$$ where $B^{-1}(x,t)\approx t^{-1/n}A^{-1}(x,t)$. Since generalized Orlicz-Spaces are UMD by Proposition \ref{UMD}, the complex interpolation method preserves one-sided compactness in virtue of Theorem \ref{ComplexProps}(9), and hence $$H^{s,A}_0(\Omega)=[L^A(\Omega),W^{1,A}_0(\Omega)]_s\xhookrightarrow{}\xhookrightarrow{}[L^A(\Omega),L^B(\Omega)]_s=L^{A_s}(\Omega).$$ Hence, again by Theorem \ref{ComplexProps}, $$H^{s,A}_0(\Omega)=[H^{s,A}(\Omega),H^{s,A}_0(\Omega)]_{\theta}\xhookrightarrow{}\xhookrightarrow{}[L^A(\Omega),L^{A_s}(\Omega)]_\theta=L^{A_\alpha}(\Omega),$$ as we wanted to prove. \qed

\section{Further methods of interpolation}
In this section, we will study the continuous embeddings for $H^{s,A}_0(\Omega)$
relaxing the hypothesis of the integrability parameters
whenever $A$ is a $\Phi$-function not depending on $x$, i.e., we will restrict ourselves to
classical Orlicz functions under the assumptions $(A0)-(A2)$, and $(Inc)_p$, $(Dec)_q$ for some
$1<p<q<\infty$. We do not consider generalized Orlicz spaces since our proof relies on
interpolation methods that, as far as we know, have not been adapted to the generalized case.
In particular, we will exploit the properties of an interpolation method developed by García and 
Fernández in \cite{GarciaFernandez} as a variation of the $\pm$-method developed by Gustavsson and
Peetre in \cite{GustavssonPeetre}. Both methods arise in the characterization of classical Orlicz spaces
as interpolation spaces between Lebesgue spaces preserving the UMD property. Throughout this section we will restrict ourselves to functions spaces defined on the whole space, and we will omit this from our notation, i.e., $L^p$ and $H^{s,p}$ will denote $L^p(\R^n)$ and $H^{s,p}(\R^n)$, respectively.

We briefly recall the $\pm$-method, see \cite{GustavssonPeetre} for more details.
Let $\rho:(0,\infty)\to (0,\infty)$. We say that $\rho$ is \textit{pseudoconcave} if it is equivalent
to a concave function. This can be characterized as the existence of a positive constant $C$ such 
that $$\rho(\lambda t)\leq C\max\{1,\lambda\}\rho(t),$$ for every $t>0$ and $\lambda>0$. Now, given
$\rho$ pseudoconcave and $(E_0,E_1)$ a compatible couple of Banach spaces, we define the space 
$\langle (E_0,E_1),\rho\rangle$, as the space of elements $u\in E_0+E_1$ such that there exists 
a sequence $(u_m)_{m\in \Z}\subset E_0\cap E_1$ such that $$u=\sum_{m\in \Z}u_m,$$ with respect to
the convergence in $E_0+E_1$, and that for every finite susbet $J\subset \Z$ and every real sequence 
$|\xi_m|_{m\in J}$ with $|\xi_m|\leq 1$; we have that $$\norm{\sum_{J}\xi_m\frac{2^{mj}}{\rho(2^m)}u_m}_{E_j}\leq C,\,j=0,1,$$
for some $C>0$ constant independent of $(\xi_m)_{m\in J}$ and $J$.
The space is endowed with the semi-norm $$\norm{u}_{\langle E_0,E_1,\rho\rangle}:=\inf_{(u_m)_{m\in \Z}} C,$$ where
the infimum is taken over all admissible sequences $(u_m)_{m\in Z}$ with $C$ having the same significance as above.
The relevant class of functions $\rho$ here is the class denoted as $\mathcal{P}^{\pm}$, which consists
on the pseudoconcave functions $\rho$ such that $$\sup_{t>0}\frac{\rho(\lambda t)}{\rho(t)}=o(\max{1,\lambda}).$$

We now present the variant of the Gustvasson-Peetre method developed in \cite{GarciaFernandez}. Let $\rho\in \mathcal{P}^{\pm}$
and consider the sequence $$\overline{r_m}:=\begin{cases} r_{2m},\,& m\geq 0,\\
r_{2|m|-1},\,& m<0,\end{cases}$$ where $(r_m(\cdot))_{m\in \N}$ is the sequence of Rademacher functions defined as $$t\mapsto r_m(t):=\operatorname{sgn}(\sin 2^{m+1}\pi t);\,t\in [0,1],\,m\in\N,$$ where $\operatorname{sgn}(\cdot)$ denotes the usual sign function.
Now, given a compatible couple of Banach spaces $(E_0,E_1)$ and $1<p<\infty$, we define the space $\langle E_0,E_1\rangle_{\rho,p}$
as the linear space of all $u\in E_0+E_1$ such that there exists a sequence $(u_m)_{m\in\Z}\subset E_0\cap E_1$
which satisfies \begin{align*} &u=\sum_{m\in\Z}u_m,\,\text{(convergence in}\, E_0+E_1),\\
&\sup_{J\subset Z}\int_0^1 \norm{\sum_{m\in J}\overline{r}_m(t)\frac{2^{jm}}{\rho(2^m)}u_m}^p_{E_j}\,dt<\infty,\,j=0,1,
\end{align*}
where the supremum is taken over all finite subsets of $\Z$. In other words, we are assuming that for $k=0,1$, the sequence 
$\left(\overline{r}_m(\cdot)2^{jm}u_m/\rho(2^m)\right)_{m\in \Z}$, is $L^p\left([0,1],E_j\right)$-summable.
We endow the space under the norm $$\norm{u}_{\langle E_0,E_1\rangle_{\rho,p}}:=\inf_{(u_m)_{m\in\Z}}\max_{j=0,1}\sup_{J\subset Z}\norm{\sum_{m\in J}\overline{r}_m(t)\frac{2^{jm}}{\rho(2^m)}u_m}_{L^p\left([0,1],E_j\right)},
$$ where the infimum is taken over all admissible sequences $(u_m)_{m\in \Z}$.

We summarize the main properties of those spaces, see \cite{GarciaFernandez} for detailed proofs:
\begin{Teor}\label{rhoProperties}
    Let $(E_0,E_1)$ a compatible couple of Banach spaces, $\rho\in \mathcal{P}^{\pm}$ and $1\leq p<\infty$. Then the following holds:
    \begin{itemize}
        \item[1)] The space $\langle E_0,E_1\rangle_{\rho,p}$ is complete;
        \item[2)] $\langle E,E\rangle_{\rho,p}=E$ with an equivalent norm for every Banach space $E$;
        \item[3)] If $(F_0,F_1)$ is another compatible couple of Banach spaces such that $E_j\xhookrightarrow{}F_j$, then $\langle E_0,E_1\rangle_{\rho,p}\xhookrightarrow{}\langle F_0,F_1\rangle_{\rho,p}$;
        \item[4)] The functor $(E_0,E_1)\to \langle E_0,E_1\rangle_{\rho,p}$ is an exact interpolation functor, i.e., the space $\langle E_0,E_1\rangle_{\rho,p}$ is an interpolation
space and for every admissible operator $T:(E_0,E_1)\to (F_0,F_1)$ we have that $$\norm{T}_{\langle E_0,E_1\rangle_{\rho,p}\to \langle F_0,F_1\rangle_{\rho,p}}\leq \max\{\norm{T}_{E_0\to F_0},\norm{T}_{E_1\to F_1}\}.;$$
\item[5)] For every $1\leq p,q<\infty$, the spaces $\langle E_0,E_1\rangle_{\rho,p}$ and $\langle E_0,E_1\rangle_{\rho,q}$ are equivalent;
\item[6)] The space $E_0\cap E_1$ is dense in $\langle E_0,E_1\rangle_{\rho,p}$;
\item[7)] If $E_0$ and $E_1$ are $q$-concave Banach lattices for some $q<\infty$, we have that $$\langle L^p(E_0),L^p(E_1)\rangle_{\rho,p}=L^p\left(\langle E_0,E_1\rangle_{\rho,p}\right);$$
\item[8)] Let $A_0,A_1$ two Orlicz functions, $E$ a Banach space and $L^{A_j}(E)$ the corresponding $E$-valued orlicz space for $j=0,1$, where $E$ is a $q$-concave Banach lattice for some $q<\infty$. If $A_j$ and $A_j^*$ are doubling for $j=0,1,$ we have that $$\langle L^{A_0}(E),L^{A_1}(E)\rangle_{\rho,p}=L^{A_\rho}(E),$$ where $$A_\rho^{-1}=A_0^{-1}\rho(A_1^{-1}/A_0^{-1}).$$
\end{itemize}
\end{Teor}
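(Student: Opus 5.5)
The plan is to verify the eight items in the order stated, since the later ones build on the earlier ones. Throughout, two tools do most of the work. The first is the \emph{Kahane contraction principle}: for scalars $|\xi_m|\leq 1$ the Rademacher averages $\int_0^1\|\sum_J \overline{r}_m(t)\xi_m x_m\|^p\,dt$ are dominated by those without the $\xi_m$. The second is the \emph{Kahane--Khintchine inequality}: all $L^p([0,1];E)$-norms of Rademacher sums are equivalent.

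\textbf{Items 1)--6).}
\begin{itemize}
\item[5)] This follows at once from Kahane--Khintchine: the quantities defining $\langle E_0,E_1\rangle_{\rho,p}$ and $\langle E_0,E_1\rangle_{\rho,q}$ are equivalent uniformly in $J$, so the two spaces coincide.
\item[4)] Given an admissible $T$, I apply $T$ termwise to an admissible decomposition $u=\sum_m u_m$. Then $Tu=\sum_m Tu_m$ in $F_0+F_1$, and $\|\sum_J\overline{r}_m 2^{jm}\rho(2^m)^{-1}Tu_m\|_{L^p(F_j)}\leq \|T\|_{E_j\to F_j}\|\sum_J\overline{r}_m 2^{jm}\rho(2^m)^{-1}u_m\|_{L^p(E_j)}$. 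Taking the infimum over decompositions gives the exact bound $\max_j\|T\|_{E_j\to F_j}$.
\item[3)] This is the special case of 4) with $T$ the inclusion.
\item[1)] I will use the criterion that a normed space is complete when absolutely convergent series converge. Given $\sum_k\|u^{(k)}\|<\infty$, I pick decompositions $(u^{(k)}_m)_m$ with norm at most $2\|u^{(k)}\|$. I then set $u_m:=\sum_k u^{(k)}_m$, with convergence in $E_0\cap E_1$ granted by the contraction principle applied to a single index. Finally, the triangle inequality in $L^p(E_j)$ bounds the Rademacher sums of $(u_m)$.
\item[6)] For density, the partial sums $\sum_{|m|\leq N}u_m$ lie in $E_0\cap E_1$. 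The tails have small norm because the defining series is unconditionally Cauchy in $L^p(E_j)$. Here the hypothesis $\rho\in\mathcal{P}^{\pm}$ enters, through the $o(\max\{1,\lambda\})$ condition, to exclude the non-separable ``$\ell^\infty$-type'' behaviour.
\item[2)] For $E_0=E_1=E$, the embedding $\langle E,E\rangle_{\rho,p}\hookrightarrow E$ follows by integrating against $t$ and using that $\min(1,2^m)/\rho(2^m)$ is bounded below. Pseudoconcavity gives $\rho(2^m)\lesssim \max\{1,2^m\}\rho(1)$. For the converse I decompose $u=\sum_m u_m$ with $u_m:=\lambda_m u$, where the $\lambda_m\geq 0$ sum to $1$ and are chosen proportional to $\rho(2^m)\min\{1,2^{-m}\}$, suitably normalized. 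The estimate then reduces to $\sum_m \lambda_m 2^{jm}/\rho(2^m)<\infty$ for $j=0,1$. This holds since $\rho\in\mathcal{P}^{\pm}$ forces $\rho(2^m)\min\{1,2^{-m}\}$ to decay geometrically as $|m|\to\infty$, which is a standard consequence of the $o(\max\{1,\lambda\})$ dilation bound.
\end{itemize}

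\textbf{Item 7).} Here I use that in a $q$-concave Banach lattice (more generally, one of finite cotype) Rademacher averages are equivalent to square functions: $\left(\int_0^1\|\sum x_m\overline{r}_m(t)\|^p\,dt\right)^{1/p}\approx\|(\sum|x_m|^2)^{1/2}\|$. Applying this in $L^p(E_j)$, which is again $\max(p,q)$-concave, and using Fubini, the defining condition becomes a pointwise-in-$x$ square-function condition. A measurable selection of near-optimal decompositions (possible by separability and the lattice structure) then gives $L^p(\langle E_0,E_1\rangle_{\rho,p})$. The reverse inclusion is immediate by Fubini.

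\textbf{Item 8), the main obstacle.} The strategy has three steps.
\begin{enumerate}
\item For lattices satisfying the Fatou property and the cotype condition above, I would show that $\langle X_0,X_1\rangle_{\rho,p}$ coincides with the Calder\'on--Lozanovskii construction $X_0\,\rho(X_1/X_0)$. This is the Gustavsson--Peetre/Shestakov identification, and the doubling of $A_j$ and $A_j^*$ (nontrivial Boyd indices) supplies the needed $q$-concavity and order continuity.
\item I would check that the Lozanovskii construction of $L^{A_0}(E)$ and $L^{A_1}(E)$ is $L^{A_\rho}(E)$ with $A_\rho^{-1}=A_0^{-1}\rho(A_1^{-1}/A_0^{-1})$. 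This is a direct computation with Young functions, analogous to the proof of Theorem \ref{InterpolationOrlicz}.
\item I would combine the two steps with 7) to pass the $E$-valued structure through.
\end{enumerate}
The delicate step is the first: controlling the lower estimate, i.e.\ building a decomposition $u=\sum u_m$ from a Lozanovskii factorization. I would try first the dyadic level-set decomposition $u_m:=u\,\chi_{\{2^m\leq A_1^{-1}/A_0^{-1}\text{-ratio}<2^{m+1}\}}$, whose square function is controlled by disjointness.
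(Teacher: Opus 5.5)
The paper gives no proof of this theorem; it refers to the original source of the $\langle\cdot,\cdot\rangle_{\rho,p}$-method for all eight items. Measured against the standard arguments, two of your steps would fail as written, and one rests on the wrong mechanism.

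\textbf{Where $\rho\in\mathcal{P}^{\pm}$ enters (items 1, 2, 4).} In 2) you invoke $\mathcal{P}^{\pm}$ in the direction that does not need it and omit it where it is essential.
\begin{enumerate}
\item The inclusion $E_0\cap E_1\hookrightarrow\langle E_0,E_1\rangle_{\rho,p}$ is trivial. Take $u_0=u$ and $u_m=0$ for $m\neq 0$; this gives norm at most $\rho(1)^{-1}\norm{u}_{E_0\cap E_1}$.
\item Your weights $\lambda_m\propto\rho(2^m)\min\{1,2^{-m}\}$ actually fail here. Indeed $\lambda_m2^{jm}/\rho(2^m)\propto\min\{1,2^{-m}\}2^{jm}$ is constant on $m\le 0$ for $j=0$ and on $m\ge 0$ for $j=1$.
\item The nontrivial direction is $\langle E_0,E_1\rangle_{\rho,p}\hookrightarrow E_0+E_1$. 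Integrating against $\overline{r}_m(t)$ only gives $\norm{u_m}_{E_0+E_1}\le C\rho(2^m)\min\{1,2^{-m}\}$. A lower bound on the weights, which is all pseudoconcavity supplies, bounds each $u_m$ but not $\sum_m u_m$.
\item What you need is $\sum_{m\in\Z}\rho(2^m)\min\{1,2^{-m}\}<\infty$. This is exactly what the $o(\max\{1,\lambda\})$ condition gives: $\bar\rho(\lambda):=\sup_t\rho(\lambda t)/\rho(t)$ is submultiplicative, so $\bar\rho(2^l)\lesssim 2^{-\varepsilon|l|}\max\{1,2^l\}$ for some $\varepsilon>0$.
\item Pseudoconcavity alone is not enough. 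For $\rho(t)=1+t$, $E=\R$ and $u_m=1/|m|$ with $1\le|m|\le N$, Khintchine gives norm $O(1)$ while $u\sim 2\log N$.
\end{enumerate}
The same summability is what your proof of 1) silently needs. It shows that $\sum_m u_m$ converges in $E_0+E_1$ and equals $\sum_k u^{(k)}$, which requires interchanging the double sum. It is also the missing half of the claim in 4) that one obtains an intermediate space.

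\textbf{Item 8.} The step you call delicate is the easy inclusion $L^{A_\rho}(E)\subseteq\langle L^{A_0}(E),L^{A_1}(E)\rangle_{\rho,p}$. The dyadic level-set pieces are disjointly supported, so for each $t$ the Rademacher sum is a pointwise sum. Pseudoconcavity bounds it by $Ca_j$, and doubling of $A_j$ gives convergence.

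What is missing is the reverse inclusion, for an arbitrary, non-disjoint decomposition $u=\sum_m u_m$. Quoting Gustavsson--Peetre/Shestakov does not cover it: that identification is for the $\pm$-method, which the paper itself notes differs from $\langle\cdot,\cdot\rangle_{\rho,p}$ in general. Item 7) does not help either, since it concerns $L^p$, not $L^{A_j}$.

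A route that works:
\begin{enumerate}
\item Put $F_j:=(\sum_m|2^{jm}\rho(2^m)^{-1}u_m|^2)^{1/2}$. In any Banach lattice, Khintchine plus Krivine's calculus bound $\norm{F_j}_{L^{A_j}(E)}$ by the Rademacher averages. Work with finite $J$ and pass to the limit by Fatou in $L^{A_\rho}$.
\item For every $\tau>0$, Cauchy--Schwarz with weights $\max\{1,2^m/\tau\}/\rho(2^m)$ gives, pointwise,
\[
\textstyle\sum_m|u_m|\le\big(F_0^2+\tau^{-2}F_1^2\big)^{1/2}\big(\sum_m\rho(2^m)^2\min\{1,\tau 2^{-m}\}^2\big)^{1/2}\le C\big(F_0^2+\tau^{-2}F_1^2\big)^{1/2}\rho(\tau).
\]
The last inequality is again the geometric decay of $\bar\rho$.
\item Taking $\tau=F_1/F_0$ yields $|u|\le CF_0\rho(F_1/F_0)$. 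Your step 2, $\rho(L^{A_0}(E),L^{A_1}(E))=L^{A_\rho}(E)$, then finishes.
\end{enumerate}

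\textbf{Items 6 and 7.} In 6), $\mathcal{P}^{\pm}$ is a condition on $\rho$ and cannot exclude $c_0$-type behaviour of the couple. The tails are small because the definition requires the Rademacher series to \emph{converge} in $L^p([0,1];E_j)$ (the ``summable'' reading). A convergent Rademacher series is unconditionally Cauchy by the contraction principle. With only $\sup_J<\infty$, density fails: take $E_0=\ell^\infty(\Z)$ and $E_1$ normed by $\sup_n 2^{-n}|x_n|$. Then $(\rho(2^n))_n$ lies in the space but stays at distance $\gtrsim 1$ from $E_0\cap E_1$.

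In 7), separability is not assumed, and no measurable selection is needed. Both exponents are $p$, so Fubini swaps $L^p(\Omega)$ and $L^p([0,1])$. One inclusion follows from simple functions plus completeness. For the other, Rademacher averages increase with $J$ (contraction principle), so monotone convergence applies.
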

For a given Orlicz function $A$ under our hypothesis, we can get the space $L^A$ as an interpolation space between Lebesgue spaces. In particular, in view of the last theorem, choosing $A_j=t^{p_j}$ for $j=0,1,$ where $1<p_1<p_0<\infty$, and the function $$\rho_{p_0,p_1}(t):=t^{\frac{p_1}{p_1-p_0}}A^{-1}\left(t^{\frac{p_0p_1}{p_0-p_1}}\right),$$ yields that $$\langle L^{p_0},L^{p_1}\rangle_{\rho_{p_0,p_1},p}=L^A,$$ see also \cite[Theorem 4.1]{GarciaFernandez}.

Note that the methods $\langle(\cdot,\cdot),\rho\rangle$ and $\langle\cdot,\cdot\rangle_{\rho,p}$ do not coincide in general, but under certain hypothesis over the Banach couples they both coincide, see \cite[Corollary 5.7]{Garcia1993}.

\begin{Teor}\label{EmbeddingBueno}
    Let $1<p<q<\infty$ such that $sp,sq<n$, $A$ an Orlicz function under our asumptions and $\rho\in \mathcal{P}^{\pm}$ such that $$\rho(t)=t^{\frac{p}{p-q}}A^{-1}\left(t^{\frac{pq}{q-p}}\right).$$ Then, $$H^{s,A}(\R^n)\xhookrightarrow{}L^{A_{\rho,\alpha}}(\R^n),$$ where $$A_{\rho,\alpha}^{-1}(t)=A^{-1}\rho(t^{-\alpha}),$$ for every $\alpha\in [0,s/n]$.
\end{Teor}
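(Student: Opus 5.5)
The plan is to realise every space in the statement as a $\langle\cdot,\cdot\rangle_{\rho,p}$-interpolation space between Lebesgue-type spaces. The embedding should then follow from the classical fractional Sobolev embeddings for $H^{s,p}$ and $H^{s,q}$ and the exactness of the functor in Theorem \ref{rhoProperties}(4). Compared with the notation of Theorem \ref{rhoProperties}(8) and the remark after it, the given $\rho(t)=t^{\frac{p}{p-q}}A^{-1}(t^{\frac{pq}{q-p}})$ is exactly $\rho_{p_0,p_1}$ with $p_0=q$ and $p_1=p$. Therefore $\langle L^{q},L^{p}\rangle_{\rho,p}=L^A$, and by the same item with $E=\ell^s_2$ (a $2$-concave lattice), $\langle L^{q}(\ell^s_2),L^{p}(\ell^s_2)\rangle_{\rho,p}=L^A(\ell^s_2)$.

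\textbf{Step 1.} Identify $H^{s,A}(\R^n)$ as an interpolation space of Bessel potential spaces. Recall that $H^{s,r}$ is a retract of $L^r(\ell^s_2)$ for $r=p,q$, and that $H^{s,A}$ is a retract of $L^A(\ell^s_2)$. In all three cases the retraction and co-retraction are the same multiplier operators (Theorem \ref{Orliczmultiplier} versus Theorem \ref{Milhinnormal}). Since retractions are preserved by any interpolation functor, Step 1 gives
$$H^{s,A}(\R^n)=\langle H^{s,q}(\R^n),H^{s,p}(\R^n)\rangle_{\rho,p}$$
with equivalent norms. At minimum it gives the continuous inclusion ``$\hookrightarrow$'', which is all we need.

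\textbf{Step 2.} Use the Lebesgue Sobolev embeddings on $\R^n$. Since $sp,sq<n$, for every $\alpha\in[0,s/n]$ we have $H^{s,r}(\R^n)\hookrightarrow L^{r_\alpha}(\R^n)$ with $\frac1{r_\alpha}=\frac1r-\alpha$, for $r=p,q$. This comes from $H^{s,r}\hookrightarrow L^r\cap L^{nr/(n-sr)}$ together with Hölder interpolation. Theorem \ref{rhoProperties}(3) then yields
$$H^{s,A}\hookrightarrow \langle L^{q_\alpha},L^{p_\alpha}\rangle_{\rho,p}.$$

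\textbf{Step 3.} Identify the target space. Apply Theorem \ref{rhoProperties}(8) with $A_0(t)=t^{q_\alpha}$ and $A_1(t)=t^{p_\alpha}$; both are doubling with doubling conjugates because $1<p_\alpha,q_\alpha<\infty$. This gives the space $L^{A_{\rho,\alpha}}$ with
$$A_{\rho,\alpha}^{-1}(t)=A_0^{-1}(t)\,\rho\!\left(\frac{A_1^{-1}(t)}{A_0^{-1}(t)}\right)=t^{\frac1q-\alpha}\,\rho\!\left(t^{\frac1p-\frac1q}\right).$$
In this expression the parameter $\alpha$ enters only through the power of $t$ multiplying $\rho$. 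It remains to rewrite this in the form stated, $A_{\rho,\alpha}^{-1}(t)=A^{-1}\rho(t^{-\alpha})$, by substituting the explicit $\rho$ and using $t^{1/q}\rho(t^{(q-p)/(pq)})=A^{-1}(t)$.

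\textbf{Main obstacle.} I expect the difficulty to be this last identification, that is, matching the Young function produced by Theorem \ref{rhoProperties}(8) with the formula as written. I would carry out the substitution $u=t^{\frac1p-\frac1q}$ carefully and check that the factor $t^{-\alpha}$ is absorbed through $\rho$ as the statement prescribes. I would also verify that the resulting function is (equivalent to) a genuine Orlicz function in $\mathcal{P}^\pm$-compatible form. Secondary issues are checking that the retraction in Step 1 intertwines the three spaces with the same operators, and the endpoint $\alpha=s/n$, where the Lebesgue embeddings are still valid since $sp,sq<n$.
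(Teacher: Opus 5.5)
Your Steps 1--3 are correct. They prove $H^{s,A}(\R^n)\hookrightarrow L^{A_\alpha}(\R^n)$ with $A_\alpha^{-1}(t)=t^{-\alpha}A^{-1}(t)$ for every $\alpha\in[0,s/n]$. The paper reaches the same intermediate conclusion slightly differently. It uses only the endpoint Sobolev embeddings, obtaining $L^B$ with $B^{-1}=t^{-s/n}A^{-1}$, and then gets intermediate $\alpha$ by complex interpolation $[L^A,L^B]_\theta$ via Theorem~\ref{InterpolationOrlicz}. Putting the intermediate Lebesgue embeddings directly into the $\rho$-method, as you do, is equally valid.

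The gap is the last step: $t^{-\alpha}A^{-1}(t)$ cannot be rewritten as $A^{-1}(t)\rho(t^{-\alpha})$, however carefully you substitute. Take $A(t)=t^r$ with $\frac1r=\frac{1-\theta_0}{q}+\frac{\theta_0}{p}$ and $\theta_0\in(0,1)$. Then the prescribed $\rho$ is $\rho(u)=u^{\theta_0}$. Your function is $t^{1/r-\alpha}$, while the statement's is $t^{1/r-\theta_0\alpha}$. For $\alpha>0$ these are not equivalent, so the target space in the statement is a different space from the one you produce. The factor $t^{-\alpha}$ is simply not absorbed by $\rho$.

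What is missing is one more interpolation step, which is exactly how the paper finishes. You also have $H^{s,A}\hookrightarrow L^A$, so $H^{s,A}\hookrightarrow L^A\cap L^{A_\alpha}$. By Theorem~\ref{rhoProperties}(2),(3), $L^A\cap L^{A_\alpha}\hookrightarrow\langle L^A,L^{A_\alpha}\rangle_{\rho,2}$. Theorem~\ref{rhoProperties}(8), now applied to the couple $(A,A_\alpha)$ rather than to powers, identifies this space as $L^{A_{\rho,\alpha}}$ with $A_{\rho,\alpha}^{-1}=A^{-1}\rho(A_\alpha^{-1}/A^{-1})=A^{-1}(t)\rho(t^{-\alpha})$.

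A more elementary alternative also works. Pseudoconcavity gives $\rho(u)\geq c\min\{1,u\}$, hence $A_{\rho,\alpha}^{-1}\geq c\min\{A^{-1},A_\alpha^{-1}\}$. This yields $L^A\cap L^{A_\alpha}\hookrightarrow L^{A_{\rho,\alpha}}$ directly.

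So your intermediate embedding into $L^{A_\alpha}$, combined with $H^{s,A}\hookrightarrow L^A$, is actually the sharper information. The embedding as stated only follows after this extra step, which your proposal does not contain and which you mistakenly expected to reduce to an algebraic identity.
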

\noindent{}\textbf{Proof:} Let $A_0=t^{q}$ and $A_1=t^{p}.$ By Theorem \ref{rhoProperties}(8), we have that $$L^A(l^s_2)=\langle L^{A_0}(l^s_2),L^{A_1}(l^s_2)\rangle_{\rho,2}.$$ Since $H^{s,A}$ is a retract of $L^A(l^s_2)$, we obtain that $$H^{s,A}=\langle H^{s,q},H^{s,p}\rangle_{\rho,2}.$$ Now, by the fractional Sobolev embedding \cite[Theorem 3.21(1)]{BellidoGarcia2025} and Theorem \ref{rhoProperties}(3), $$H^{s,A}=\langle H^{s,q},H^{s,p}\rangle_{\rho,2}\xhookrightarrow{}\langle L^{q_s^*},L^{p_s^*}\rangle_{\rho,2}=L^B,$$ where $$\frac{1}{q_s^*}=\frac{1}{q}-\frac{s}{n},\,\frac{1}{p_s^*}=\frac{1}{p}-\frac{s}{n},$$ and $$B^{-1}(t)=t^{1/q_s^*}\rho\left(t^{1/p_s^*-1/q_s^*}\right).$$ Note that $$\frac{1}{p_s^*}-\frac{1}{q_s^*}=\frac{1}{p}-\frac{1}{q},$$ and hence $$B^{-1}(t)=t^{-s/n}t^{1/q}\rho\left(t^{1/p-1/q}\right)=t^{-s/n}A^{-1}(t).$$ By complex interpolation, we get that $$H^{s,A}\xhookrightarrow{}[L^A,L^B]_\theta=L^{A_\theta},$$ where $$A_\theta^{-1}=(A^{-1})^{1-\theta}(B^{-1})^{\theta}=t^{-\theta s/n}A^{-1}.$$ For every $\alpha\in[0,s/n]$, we can find $\theta\in [0,1]$ such that $\alpha=\theta s/n$. Hence, we denote $A_{\alpha}$ by the associated $A_\theta$. Now, for every $\alpha\in [0,s/n]$, by Theorem \ref{rhoProperties}(2), $$H^{s,A}\xhookrightarrow{}\langle L^A,L^{A_\alpha}\rangle_{\rho,2}=L^{A_{\rho,\alpha}},$$ where $$A_{\rho,\alpha}^{-1}=A^{-1}\rho(B^{-1}/A^{-1})=A^{-1}\rho(t^{-\alpha}),$$ as we wanted to prove. \qed

\section*{Acknowledgements}
The authors would like to thank J.C. Bellido for the useful suggestions
on a preliminary version of the manuscript and for hosting the first author during his visit to Universidad de Castilla-La Mancha in February of 2025. The authors would also like to thank O. Domínguez for several conversations on the subject of this paper.
\section*{Author Contributions}
We declare that the contribution of all co-authors was approximately equivalent
\section*{Funding}
This work was supported by {\it Agencia Estatal de Investigación} (Spain) through grant PID2023-151823NB-I00 and {\it Junta de Comunidades de Castilla-La Mancha} (Spain) through grant SBPLY/23/180225/000023. P.M.C. is supported by PhD FCTgrant UI/BD/152276/2021 and G.G.-S. is supported by a Doctoral Fellowship by \textit{Universidad de Castilla-La Mancha} \text{2024-UNIVERS-12844-404}.
\section*{Data Availability}
No datasets were generated or analysed during the current study
\section*{Statements and declarations}
\textbf{Ethical Approval} This article does not contain any studies with human participants or animals performed by
the authors.

\noindent{}\textbf{Competing interests} The authors declare no competing interests.

\appendix


\addcontentsline{toc}{section}{References}
\bibliographystyle{plain} 

\end{document}